\documentclass{article}
\usepackage[T1]{fontenc}
\usepackage[utf8]{inputenc}
\usepackage[american]{babel}

\usepackage{graphicx}

\usepackage{amsmath,amsfonts,enumerate,amssymb}
\usepackage{amsthm}

\usepackage[usenames,dvipsnames,svgnames]{xcolor}

\usepackage{enumitem}

\usepackage{csquotes}


\newcommand*{\Tn}{\mathcal{T}_n}
\newcommand*{\CC}{\mathbb{C}}

\newcommand*{\cT}{\mathcal{T}}

\newcommand*{\LS}{L}

\newcommand*{\oLS}{\overline{L}}

\newcommand*{\oSi}{\overline{S}_{[0,1]}}
\newcommand*{\Si}{S_{[0,1]}}

\newcommand*{\oLStilden}{\overline{L}^{(\widetilde{n})}}
\newcommand*{\oLSn}{\overline{L}^{(n)}}

\newcommand*{\cless}{\prec}
\newcommand*{\clesse}{\preccurlyeq}

\newcommand*{\diff}{\Phi}

\newcommand*{\newJ}{\widetilde{J}}
\newcommand*{\newF}{\widetilde{F}}
\newcommand*{\newmstar}{\widetilde{m}^*}

\newcounter{thmcounter}
\theoremstyle{plain}
\newtheorem{thm}[thmcounter]{Theorem}
\newtheorem{theorem}[thmcounter]{Theorem}

\theoremstyle{plain}

\newtheorem{proposition}[thmcounter]{Proposition}

\newtheorem{lemma}[thmcounter]{Lemma}
\newtheorem{cor}[thmcounter]{Corollary}

\theoremstyle{definition}
\newtheorem{rem}[thmcounter]{Remark}

\theoremstyle{example}
\newtheorem{example}[thmcounter]{Example}


\newcommand*{\ZZ}{\mathbb{Z}}

\newcommand{\avec}{\mathbf{a}}

\newcommand*{\uR}{\underline{\mathbb{R}}}

\newcommand*{\oS}{\overline{S}}
\newcommand*{\ve}{\varepsilon}
\newcommand*{\de}{\delta}

\newcommand*{\ff}{\varphi}

\newcommand*{\RR}{{\mathbb{R}}}

\newcommand*{\cI}{\mathcal{I}}
\newcommand*{\cJ}{\mathcal{J}}

\newcommand*{\si}{\sigma}

\newcommand*{\bR}{\mathbb{R}}

\newcommand*{\zz}{\mathbf{z}}

\newcommand*{\ww}{\mathbf{w}}
\newcommand*{\mol}{\overline{m}}
\newcommand*{\mul}{\underline{m}}

\newcommand*{\yy}{\mathbf{y}}
\newcommand*{\ee}{\mathbf{e}}

\newcommand*{\xx}{\mathbf{x}}

\newcommand*{\TT}{\mathbb{T}}
\newcommand*{\NN}{\mathbb{N}}
\def\downto{\downarrow}
\def\upto{\uparrow}

\newcommand*{\bT}{\mathbb{T}}

\newcommand*{\bC}{\mathbf{C}}

\title{On the weighted trigonometric Bojanov-Chebyshev extremal problem}

\author{
Béla Nagy and Szilárd Gy. Révész}

\begin{document}

\maketitle

Dedicated to Vitalii Vladimirovich Arestov, a leading mathematician and teacher of generations, on the occasion of his 80th anniversary

\begin{abstract}
We investigate the weighted Bojanov-Chebyshev extremal problem for trigonometric polynomials, that is,
the minimax problem of minimizing $\|T\|_{w,C(\TT)}$,
where $w$ is a sufficiently nonvanishing, upper bounded, nonnegative weight function,
the norm is the corresponding weighted maximum norm on the torus $\TT$, and $T$ is a trigonometric polynomial
with prescribed multiplicities $\nu_1,\ldots,\nu_n$ of root factors $|\sin(\pi(t-z_j))|^{\nu_j}$.
If the $\nu_j$ are natural numbers and their sum is even,
then $T$ is indeed a trigonometric polynomial
and
the case when all the $\nu_j$ are 1 covers the Chebyshev extremal problem.

Our result will be
more general, allowing, in particular,
so-called generalized trigonometric polynomials.
To reach our goal, we invoke Fenton's
sum of translates method.
However, altering from the earlier described cases without weight or on the interval,
here we find different situations, and can state less about the solutions.
\end{abstract}

Keywords: minimax and maximin problems, kernel function,
sum of translates function, vector of local maxima,
equioscillation, majorization

2020 Mathematics subject
classification: 26A51, 26D07, 49K35

\section{Introduction}\label{sec:Intro}

In this paper our aim is to solve the Bojanov-Chebyshev
extremal problem
in the setting of weighted trigonometric polynomials.
As in our earlier papers on related subjects,
our approach is the so-called
\enquote{sum of translates method} of Fenton,
what he introduced in \cite{Fenton}.
However, here we do not develop the whole theory for two
reasons:
first, in the periodic,
i.e., torus setup,  in
\cite{TLMS2018}
we have already developed much of what is
possible, and second, much of what we could find useful
here, is simply not holding true.
In this regard, our
Example \ref{ex:counterexample} is an important part of the
study, showing the limits of any proof in this generality.

The analogous problems for the unweighted periodic case and
the weighted and unweighted algebraic polynomial cases on
the interval were already solved in \cite{TLMS2018} and in
\cite{Sbornik} and \cite{JMAA}.
The results available to
date do not imply, not in a direct and easy way,
the corresponding result to the weighted trigonometric
polynomial Bojanov-Chebyshev problem.
In fact, some of them
simply does not remain valid.
So, the weighted
trigonometric polynomial case poses new challenges and
requires a careful adaptation of our methods, with an
avoidance of certain obstacles -- for example a
bagatelle-looking, but in fact serious dimensionality
obstacle in the way of proving a homeomorphism theorem,
analogous to the earlier cases -- and recombining our
existing knowledge about the torus setup with all what can
be saved and reused from the interval case.
So, we heavily
rely on all our earlier papers \cite{TLMS2018} \cite{Homeo}
\cite{Sbornik} \cite{Ural} \cite{JMAA} on the subject, while
these in themselves will not suffice to reach our goals.
We will still need to devise new proofs or at least new
versions for various existing arguments.

We note that proving minimax- and equioscillation type results in certain
contexts may be attempted without rebuilding the whole
theory, just by transferring some existing results of an
already better explored case to the new settings.
This has
already been done in \cite{TLMS2018} for the (unweighted)
algebraic polynomial case of the interval, deriving it from
the (unweighted) trigonometric polynomial case, explored in
the major part of \cite{TLMS2018}.
However, the transference was not easy and broke down for
general weights (even if for even weights it seemed
working).
Similarly, in \cite{Tatiana} Tatiana Nikiforova
succeded in transferring certain results to the real line
and semiaxis cases from the interval case -- while leaving
unresolved some of the related and
still interesting questions.
However, in both cases we can expect a more
detailed and complete picture when we take the time to build
up the method and explore the full strength
of it right in the given context.
Therefore, we did not settle with the
results which could be transferred from \cite{TLMS2018}, but
worked out the interval case fully
in \cite{Sbornik}, \cite{Homeo}, \cite{Ural};
we also think that it would be
worthwhile to do so in the cases of the real line and the
semiaxis.
In particular, the relevant variant of the
homeomorphism theorem is missed very much for the real line
and the semiaxis.
However, as already said and as will be
explained in due course later, in the current setup that
buildup does not seem to be possible, and we must be
satisfied by a combination of transferred results and ad hoc
arguments.

From our point of view, however, the weighted trigonometric
polynomial Bojanov-Chebyshev problem is not a main goal,
but more of an application,
which testifies the strength of the method.
We try to work in a rather general framework,
and prove more general results than that.
In particular, the results will be valid
also for generalized trigonometric
polynomials (GTPs), which are introduced,
e.g., in
\cite{ErdelyiBorwein} Chapter A4 as follows.
\begin{align}\label{def:gtp}
\cT:=\bigcup_{n=1}^\infty \Tn, \qquad \Tn:= \bigg\{ T(\zz,t) & :=
c_0\prod_{j=1}^{n} \left| \sin\pi(t-z_j)\right|^{\nu_j} ~:~ c_0> 0,
\\ \notag & \nu_j>0 \, (j=1,\ldots,n), \, \zz=(z_1,\ldots,z_n)\in\CC^n \bigg\}.
\end{align}
By periodicity one can assume $0\le \Re z_j<1$, and in the below extremal problems
it is obvious that replacing $\Re z_j$ for $z_j$ can only decrease the quantity to be minimized,
so that we will assume that all the $z_j$'s are real.
However, fixing the ordering of $z_j$ (or $\Re z_j$) has a role,
with different fixed orderings posing separate extremal problems, and the
ordering-specific solution being much stronger, than just a \enquote{global} minimization.
As this issue has already been discussed in \cite{TLMS2018} and \cite{Sbornik}, e.g.,
we leave the details to the reader simply addressing the order-specific, stronger question here.
One particular result ahead of us will be the following.

\begin{theorem}
\label{thm:Bojanov}
Let $n\in \NN$ and $\nu_1,\ldots,\nu_n>0$ be given.
Put ${\bf \nu}:=(\nu_1,\ldots,\nu_n)$.

Further, let $w:\bR\rightarrow[0,\infty)$ be an upper bounded,
nonnegative, $1$-periodic weight function,
attaining positive values at more than $n$ points of $\TT:=\RR/\ZZ$.

Denote the weighted sup norm by $\|.\|_w$ and consider the minimax problem
\begin{align*}
M:=M(w,{\bf \nu}):=
\inf\big\{ & \|T(\zz,.) \|_w:\ T(\zz;t) =
\prod_{j=1}^{n} \left|\sin\pi(t-z_j)\right|^{\nu_j} \in \Tn,
\\ & \exists c \in [0,1) \quad \text{ such that }
\quad c\le \Re z_1\le \dots \le \Re z_n \le c+1 \big\}.
\end{align*}

Then there exists a minimax point $\zz^*=(z_1^*,\ldots,z_n^*)\in \TT^n$
and with the prescribed cyclic ordering of the nodes,
satisfying $\|T(\zz^*,.)\|_w=M(w,{\bf \nu})$.

Moreover, all $z^*_j$s are distinct and real, and their cyclic ordering is strict in the sense that
there exists $c \in \RR$ such that
$c <z_1*<\dots<z_n^*<c+1$
(as in the prescribed order, but with strict inequalities).

Furthermore, this extremal point has the \emph{equioscillation property}, that is,
$\max\{T(\zz^*,t):\ z^*_1\le t\le z^*_{2}\}=\dots=\max\{T(\zz^*,t):\ z^*_j\le t\le z^*_{j+1}\}= \dots= \max\{T(\zz^*,t):\ z^*_n\le t\le z^*_{1}+1\}=M(w,{\bf \nu})$.
\end{theorem}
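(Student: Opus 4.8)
The plan is to take logarithms, recast the problem as a sum-of-translates minimax problem on the torus, and then obtain existence, equioscillation and non-degeneracy of a minimax point by recombining the periodic theory of \cite{TLMS2018} with the weighted-interval theory of \cite{Sbornik,JMAA}, plus some ad hoc perturbation arguments. After replacing each $z_j$ by $\Re z_j$ (which, as already noted, does not increase $\|T(\zz,\cdot)\|_w$) and taking logarithms, set $J:=\log w$ and $K(s):=\log|\sin\pi s|$. Then $J$ is upper bounded, $1$-periodic and $[-\infty,\infty)$-valued, and may be taken upper semicontinuous since passing to the upper semicontinuous regularization of $w$ changes the weighted sup norm of no continuous nonnegative function; while $K$ is $1$-periodic, symmetric about $1/2$, continuous into $[-\infty,\infty)$, tends to $-\infty$ at the integers, and is strictly concave on $(0,1)$ (indeed $K''(s)=-\pi^2/\sin^2\pi s$), so that $(J,K)$ fits the periodic sum-of-translates framework of \cite{TLMS2018}. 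With $F(\zz,t):=J(t)+\sum_{j=1}^{n}\nu_j K(t-z_j)$, $m_j(\zz):=\max\{F(\zz,t):z_j\le t\le z_{j+1}\}$ (indices mod $n$, $z_{n+1}:=z_1+1$) and $\overline m(\zz):=\max_t F(\zz,t)=\max_{1\le j\le n}m_j(\zz)$, the problem reads $\log M=\inf\{\overline m(\zz):\zz\ \text{cyclically ordered in}\ \TT^n\}$.

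For a minimax point, observe that for each fixed $t$ the function $\zz\mapsto w(t)\prod_j|\sin\pi(t-z_j)|^{\nu_j}$ is continuous on the compact set of cyclically ordered $n$-tuples, so $\zz\mapsto\|T(\zz,\cdot)\|_w=\sup_t w(t)T(\zz,t)$ is lower semicontinuous there and attains its infimum at some $\zz^*$. As $T(\zz^*,\cdot)$ has at most $n$ distinct zeros on $\TT$ whereas $w$ is positive at more than $n$ points, some $p$ has $w(p)>0$ and $T(\zz^*,p)>0$, so $M=\|T(\zz^*,\cdot)\|_w\ge w(p)T(\zz^*,p)>0$, i.e.\ $\log M>-\infty$.

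The crux is to show $\zz^*$ equioscillates, $m_1(\zz^*)=\cdots=m_n(\zz^*)$. On each gap $[z_j^*,z_{j+1}^*]$ the function $F(\zz^*,\cdot)$ equals $J$ plus a function concave in $t$ (each $\nu_i K(\cdot-z_i)$ is concave on a gap not containing $z_i$), so $m_j$ is precisely a weighted-interval Chebyshev-type extremal value, with the adjacent factors $\nu_j K(\cdot-z_j),\nu_{j+1}K(\cdot-z_{j+1})$ as the interval root factors and $J+\sum_{i\ne j,j+1}\nu_i K(\cdot-z_i)$ (still upper semicontinuous and upper bounded on a nondegenerate gap) as the local weight; the monotone trade-off is that moving a node rightward lowers the maximum on the gap to its right and raises it on the gap to its left, so a configuration with unequal interval maxima should be perturbable to strictly decrease its largest one without raising any other above it, contradicting minimality. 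Making this rigorous is where the existing results do not transfer off the shelf — the arguments of \cite{TLMS2018} carry no weight, those of \cite{Sbornik,JMAA} are on an interval not the torus, and, as explained in the Introduction, the homeomorphism theorem that in those settings gives equioscillation cleanly is here blocked by a dimensionality issue — so one must combine the torus bounds of \cite{TLMS2018} with the per-gap weighted-interval perturbation lemmas of \cite{Sbornik,JMAA}, carefully accounting for the fact that moving a node perturbs every gap through $K$; alternatively one builds an equioscillating configuration directly, parametrising by gap lengths in the open simplex (where $m_j\le\log\sup w+\nu_j\log\sin\pi\ell_j\to-\infty$ as $\ell_j\downarrow0$, using $K\le0$ and $w$ upper bounded) and invoking a Poincar\'e--Miranda/degree argument, then identifying its common value with $\log M$ by a further comparison argument. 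I expect this step — equioscillation of a minimax point — to be the main obstacle, precisely because the clean homeomorphism route of the earlier papers is unavailable and a substitute has to be assembled by hand.

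Granting equioscillation, $\zz^*$ is a cyclically ordered minimax configuration with $m_1(\zz^*)=\cdots=m_n(\zz^*)=\overline m(\zz^*)=\log M>-\infty$. If some gap were degenerate, $z_j^*=z_{j+1}^*$, the $j$-th interval would collapse to a single root of $T(\zz^*,\cdot)$ and $m_j(\zz^*)=F(\zz^*,z_j^*)=-\infty$ would contradict $\log M>-\infty$; hence all gaps are positive, so there is $c\in\RR$ with $c<z_1^*<\cdots<z_n^*<c+1$, the nodes are pairwise distinct, and they are real by the initial reduction. Exponentiating back gives $\max\{T(\zz^*,t):z_j^*\le t\le z_{j+1}^*\}=M$ for every $j$ and $\|T(\zz^*,\cdot)\|_w=M=M(w,{\bf \nu})$, which is exactly the claimed equioscillation property, with $\zz^*$ a minimax point of the prescribed cyclic ordering. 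This completes the plan.
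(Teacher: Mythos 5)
Your reduction to the sum-of-translates problem, the existence of a minimax point by compactness (the paper gets it from continuity of $\mol^*$, Proposition \ref{prop:molstarcontinuous}; your lower-semicontinuity argument would also do), the positivity of $M$, and the derivation of strict cyclic ordering from singularity of $K=\log|\sin\pi t|$ once equioscillation is known, all match the paper. But the heart of the theorem --- that a minimax point must equioscillate --- is exactly the step you do not prove: you flag it yourself as ``the main obstacle'' and offer only two tentative routes. The first route leans on the heuristic that moving a single node rightward lowers the maximum on the gap to its right and raises it on the gap to its left; this is the interval intuition, and it needs monotonicity-type behaviour of the kernel which is unavailable here (a periodic monotone kernel is constant, as the paper stresses), so a per-gap, one-node-at-a-time transfer of the interval lemmas of \cite{Sbornik,JMAA} does not go through: moving one node changes $F$ on every arc with no controlled sign. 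The paper's substitute is Lemma \ref{lem:gen-max-perturbation}: given the nontrivial partition $\cI=\{i: m_i^*(\ww)=\mol^*(\ww)\}$, $\cJ$ its complement (arcs for $i\in\cI$ are nondegenerate because $K$ is singular), one perturbs \emph{all} nodes simultaneously, in balanced pairs governed by the $\mu=1$ case of Lemma \ref{lem:widening}, so that every $m_i^*$, $i\in\cI$, strictly decreases (strict concavity of the log-sine kernel) while the $\cJ$-arcs only grow; the general partition is handled by induction, absorbing a node into the field. Combined with continuity of the $m_j^*$ for singular kernels (Proposition \ref{prop:mjstarcont} \ref{mjstarcont:parta}), the $\cJ$-maxima rise by arbitrarily little, so $\mol^*$ strictly decreases at a non-equioscillating minimax point --- the contradiction that closes Theorem \ref{thm:minimax}, of which Theorem \ref{thm:Bojanov} is the special case. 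This lemma, not a transferred interval argument, is what your outline is missing.

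Your alternative route --- construct an equioscillating configuration by a Poincar\'e--Miranda/degree argument and ``identify its common value with $\log M$'' --- cannot work as stated in this weighted torus setting. The paper's Example \ref{ex:counterexample} (with the very kernel $\log|\sin\pi t|$, $n=2$, and an admissible weight) exhibits equioscillating node systems with every equioscillation value in $[-2\log 2,-\log 2]$: equioscillation does not characterize the minimax here, there is no single value to identify, and the homeomorphism machinery that made such identifications possible in \cite{TLMS2018,Sbornik} is exactly what fails (the paper salvages only the partial, fixed-$y_0$ version, Theorem \ref{thm:partial-homeo}). So the gap is genuine: without the simultaneous perturbation lemma (or an equivalent device), neither of your sketched paths yields the equioscillation of the minimax point.
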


The occurrence of $c$ in the description is another simple-looking,
yet important difference between the interval and torus setup.
Basically, we fix here the ordering of nodes $z_j^*$ only cyclically, that is,
as they follow each other when one covers the circle once, moving continuously from
some appropriate $c\in \TT$ in the positive (counter-clockwise) orientation until return.

\section{Basics for the Bojanov-Chebyshev problem}\label{sec:Basics}

\subsection{Trigonometric polynomials and generalized trigonometric polynomials}
It is well known that (real) trigonometric polynomials
can be factorized as follows.
Let
\begin{equation}
T(t):=
a_0 +\sum_{j=1}^n
a_j  \sin( 2\pi\,j\,t)
+ b_j \cos(2\pi\,j\,t)
\end{equation}
be a (real) trigonometric polynomial
($a_0,a_1,b_1,a_2,b_2,\ldots,a_n,b_n\in\bR$, $a_n^2+b_n^2\ne 0$)
of degree $n$
(with period $1$).
Then (see, e.g., \cite{ErdelyiBorwein}
p.~10) there exist uniquely $c_0\in\bR$,
$c_0\ne 0$,
$z_1,\ldots,z_{2n}\in\bC$ such that nonreal $z_j$'s occur in conjugate pairs and
\begin{equation}
T(t)=c_0\prod_{j=1}^{2n} \sin\left(\pi(t-z_j)\right).
\end{equation}
This explains that GTPs are indeed generalizations of trigonometric polynomials\footnote{The somewhat curious fact is that root factorization of trigonometric polynomials relies on \emph{pairs of factors} of the form $\sin(\pi(t-z_j))$, where one such factor in itself is not a trigonometric polynomial (as it is only antiperiodic, but not periodic by 1). Considering general products of root factors thus leads to GTPs.}.

By taking logarithm of a generalized trigonometric polynomial \eqref{def:gtp},
we have
\[
\log T(t) =
\log c_0 +
\sum_{j=1}^n \nu_j
\log\big|\sin(\pi(t-z_j))\big|.
\]
In this work we assume for normalization that our trigonometric polynomial or GTP is \emph{monic}, i.e., the
\enquote{leading coefficient} is $c_0=1$. However, we consider weights,
which are fixed, but can as well be constants, so that the weighted norm can incorporate any other prescribed leading coefficient as well.
Obviously, the weighted minimax problem is equivalent to minimizing $\log \|T(\zz,\cdot)\|_w=\sup_{\TT} \log w(t) + \sum_{j=1}^n \nu_j \log|\sin(\pi(t-z_j))|$.
This reformulation leads to considering sums (and positive linear combinations) of translated copies of the basic
\enquote{kernel function}
$\log |\sin(\pi t)|$, instead of products of root factors.
That reformulation, so standard in logarithmic potential theory, will be the starting point of our presentation of the Fenton method for the current setup.

\subsection{Basics of Fenton's sum of translates approach}

In this paragraph we present the by now standard notations and terminology how we use Fenton's method.
There is particular need for this clarification because we will use it in \emph{two} setups, needing it for the torus $\TT$,
but also time to time referring to and invoking into our arguments corresponding results for the interval case.
Here we start with notations and terminologies which can be equally interpreted for the torus and real line case,
so with a slight abuse of notation we do not distinguish between them.

However, in the next subsection we set a separated
terminology with quantities for the periodic case denoted by
a star, because in these notions there are some essential
alterations.
With this long, and sometimes doubled list of
definitions, notions and terminology, these paragraphs will
be boring and longish, but in later sections we will need it
for precise references.
Note that in most of our definitions we will \emph{not}
assume that the considered functions and setups were
periodic, and handle the periodic cases only as special
cases, especially pointing out the periodicity assumption.

A function $K:(-1,0)\cup (0,1)\to \bR$ is called a \emph{kernel function}
if it is concave on $(-1,0)$ and on $(0,1)$, and
if it satisfies
\begin{equation}\label{eq:Kzero}
\lim_{t\downto 0} K(t) =\lim_{t\upto 0} K(t).
\end{equation}
By the concavity assumption these limits exist, and a kernel function has one-sided limits also at $-1$ and $1$. We set
\begin{equation*}
K(0):=\lim_{t\to 0}K(t),\quad K(-1):=\lim_{t\downto -1} K(t) \quad\text{and}\quad K(1):=\lim_{t\upto 1} K(t).
\end{equation*}
We note explicitly that we thus obtain the extended continuous function $K:[-1,1]\to \bR\cup\{-\infty\}=:\uR$, and that we have $\sup K<\infty$.
Also note that a kernel function is almost everywhere differentiable.

A kernel function $K$ is called \emph{singular} if
\begin{equation}\label{cond:infty}
\tag{$\infty$}
K(0)=-\infty.
\end{equation}

We say that the kernel function $K$ is \emph{strictly concave}
if it is strictly concave on both of the intervals $(-1,0)$ and $(0,1)$.

In this paper we consider only systems of kernels which are constant multiples of each other,
i.e.,
\begin{equation}
K_j(t)=\nu_j K(t)
\label{Kconstmultiple}
\end{equation}
for some $\nu_1,\ldots,\nu_n>0$ and some kernel function $K(t)$.

The condition
\begin{equation}
K'(t)-K'(t-1)\geq c \quad \text{for a.e. } t\in[0,1],
\label{cond:pmc}
\tag{$PM_c$}
\end{equation}
was called \enquote{periodized $c$-monotonicity}
in \cite{Homeo} and \cite{Sbornik}.
The particular case $c=0$ deserves special attention.
Then we have
\begin{equation}
K'(t)-K'(t-1)\geq 0 \quad \text{for a.e. }
t\in[0,1].
\label{cond:pm0}
\tag{$PM_0$}
\end{equation}

Our main objective is  the study of kernels which extend to $\bR$ $1$-periodically:
\begin{equation}
K(t-1)=K(t),
\quad t\in\bR,
\label{Kperiodic}
\end{equation}
but sometimes we will invoke more general, not necessarily periodic kernels, too.
It is straightforward that \eqref{Kperiodic} implies \eqref{cond:pm0}.

\smallskip

Note that the log-trigonometric kernel
\begin{equation}
K(t):=\log\big|\sin(\pi t)\big| \qquad (t\in \RR),
\label{logsinkernel}
\end{equation}
which is in the focus of our analysis, is periodic \eqref{Kperiodic},
strictly concave and singular \eqref{cond:infty} (and in particular $K(1)=K(-1)=-\infty$, too).

\smallskip

We will call a function $J:\bR\to\uR$ an
\emph{external $n$-field function on $\bR$}
if it is $1$-periodic, bounded above and
it assumes finite values
at more than  $n$ different points from $[0,1)$.
Comparing this definition
with that of \cite{Sbornik},
we see that
if $J$ is an external $n$-field function on $\bR$, then
$J$ is an external $n$-field function on $[0,1]$.
In the opposite direction,
if $J$ is an external $n$-field function on $[0,1]$ and $J(0)=J(1)$,
then it can be extended
$1$-periodically to $\bR$,
to an external $n$-field function on $\bR$.
We use external $n$-field
functions on $\bR$
from now on and for
simplicity, we call them
field functions.
With a slight abuse of notation we will also consider them as external field functions on $\TT:=\RR/\ZZ$: again, on $\TT$ the defining properties are that $J:\TT\to \uR$, $J>-\infty$
  at
more than $n$ points of $\TT$, and $J$ is upper bounded.

\medskip

For a field function $J$
we define its
\emph{singularity set}
and \emph{finiteness domain} by
\begin{equation}\label{eq:Xdef}
X:=X_J:=J^{-1}(\{-\infty\})\cap [0,1)
\quad\text{and}\quad
X^c:=[0,1)\setminus X=J^{-1}(\RR)\cap [0,1).
\end{equation}
Then $X^c$ has cardinality exceeding $n$, in particular $X\neq [0,1)$.
Considering $J$ as defined on $\TT$,
we can replace $[0,1)$ with $\TT$ in all the above.

\medskip

Given $n\in \NN$ and $n$ kernel functions $K_1,\ldots,K_n$, and an $n$-field function $J$,
pure sum of translates and
sum of translates functions are defined as
\begin{align}
f(\xx,r):=\  &
\sum_{j=1}^n
K_j\left(r-x_j\right),
\\
F(\xx,r):=\ &
J(r) + f(\xx,r)
\label{sumoftranslates}
\end{align}
where $r\in\bR$,
 and
$\xx=(x_1,x_2,\ldots,x_n)\in [0,1]^n$; or,
analogously, we can define $f(\yy,t)$ and $F(\yy,t)$ also for $t\in \TT$, $\yy \in \TT^n$.

\subsection{Differences between the torus and the interval setting}

A useful step in several of our arguments--already in \cite{TLMS2018}--is the
\enquote{cutting up}
of the torus at an arbitrary point $c\in \TT$.
To formalize it, we introduce the mapping $\pi_c: \RR \to \TT$, $\pi_c(r):=\{r+c\}= r+c \mod 1$.
That constitutes a (multiple) covering mapping of $\TT$ (hence in particular it is continuous), and it is bijective on $[0,1)$,
so  its inverse $\pi_c^{-1}:=(\pi_c|_{[0,1)})^{-1} : \TT \to [0,1)$ is bijective, too.
However, in the inverse direction the mapping ceases to remain continuous:
it is continuous at all $t\ne c , t\in \TT$,
but at $c$ it has a jump.

Cutting up is particularly useful when we want to prove local results like e.g. continuity of some mappings at $\xx \in \TT^n$.
Choosing $c$ appropriately, node systems from $\TT^n$,
subject to some ordering restriction and close to $\xx$ may correspond to node systems in $[0,1)^n$ admitting a specific ordering in the interval.
However, ordered node systems do not have a global match on $\TT$.
We will detail this phenomenon below.

In \cite{TLMS2018} we introduced,
for any permutation $\sigma: \{1,\ldots,n\} \to \{1,\ldots,n\}$,
the corresponding simplex on $[0,1]$ as
\begin{equation*}
\Si^{(\sigma)}:=
\{(x_1,\ldots,x_n)\in[0,1]^n:
0< x_{\si(1)}< x_{\si(2)}< \ldots < x_{\si(n)}< 1 \}\subset \bR^n.
\end{equation*}
Its closure is
\begin{equation*}
\oSi^{(\sigma)}=\{(x_1,\ldots,x_n)
\in[0,1]^n: \  0\le x_{\si(1)} \le x_{\si(2)}\le \ldots \le x_{\si(n)}
\le
1 \}\subset \bR^n.
\end{equation*}

An essential difference between the torus and interval setup is that in $[0,1]$
we cannot perturbe in both directions the nodes lying at the endpoints:
they can be moved only towards
the interval center.
That restriction could be considered responsible for the need of
some monotonicity assumption about the kernels when proving minimax etc.~results for the interval case.
However, in the torus a monotonicity assumption is in fact impossible:
a periodic and monotone kernel function would necessarily be constant.
On the other hand we had already seen in \cite{TLMS2018} that perturbation of node systems on the torus,
in particular when we are free to decide about the direction of change of the nodes, are very useful.
As a consequence, we need to consider all node systems,
which may arise by means of such a perturbation, together.
So, we need to consider the case, when a node passes over $0$ and reappears at $1$, as the same ordering.
In fact, that is very natural: on the torus there is no strict ordering, but only an orientation, and the
\enquote{order of nodes}
can only be fixed as up to rotation.
This we may call cyclic ordering. In this sense we may write $x_1 \clesse \dots \clesse x_n$ if starting from $x_1\in \TT$ and
moving in the counterclockwise direction (that is, according to the positive orientation of the circle),
we pass the points in the order of their listing until after a full rotation we arrive at the initial point $x_1$.
Similarly for the strict precedence notation $x_1 \cless \dots \cless x_n $.
Correspondingly, arcs are defined as the set of points between two endpoints: $[a,b]:=\{ x\in \TT~:~ a\clesse x \clesse b\}$ etc.
Note that for $n=2$ $x_1\clesse x_2$ and also $x_2\clesse x_1$ hold simultaneously for all points $ x_1, x_2 \in \TT$, and that cyclic ordering of $n$ nodes is possible in $(n-1)!$ different ways.

The \enquote{large simplex on the torus} and its closure is defined as
\begin{align}
\LS:=&\ \{(y_1,y_2,\ldots,y_n)\in\bT^n:
\ y_1\cless y_2\cless \ldots \cless y_n (\cless y_1)
\},
\label{def:LS}
\\
\oLS:=&\ \{(y_1,y_2,\ldots,y_n)\in\bT^n:
\ y_1\clesse y_2\clesse \ldots \clesse y_n (\clesse y_1)
\}.
\label{def:oLS}
\end{align}
One may consider various permutations for the cyclic ordering, but by relabeling we will always assume that the ordering is just this natural cyclic order.

Pulling back by any $\pi_c^{-1}$
coordinatewise
we see that
\begin{equation}
\label{eq:LSSsigma}
\pi_c^{-1}(\oLS)
= \cup_{j=0}^{n-1}  \oSi^{(\sigma_j)},
\quad
\si_j(\ell):=\begin{cases}  \ell+j,
&\text{ if }
\ell=1,\ldots, n-j
\\ \ell+j-n, &
\text{ if } \ell=n-j+1,\ldots,n. \end{cases}
\end{equation}
This decomposition is not disjoint, given that $\oLS$ is connected.
Similarly, no representation of $\LS$ by
disjoint $\Si^{(\si)}$ exists,
as $\LS$ is connected, too.

For $\xx\in\oSi^{(\si)}$,
we have defined the intervals
\begin{align*}
I_0(\xx):=&\ [0,x_{\si(1)}], \quad I_j(\xx):=[x_{\si(j)},x_{{\si(j+1)}}]
\quad (1\le j \le n-1), \quad I_n(\xx):=[x_{\si(n)},1],
\end{align*}
and the corresponding
\enquote{interval maxima}
(in fact, supremums, not maximums) as
\begin{equation*}
m_j(\xx):=\sup\{F(\xx,t):\  t\in I_j(\xx)\},
\quad
j=0,1,\ldots,n.
\end{equation*}

Analogously, for an arbitrary $\yy\in \oLS$ we define
\begin{align}
I_j^*(\yy):=& \  \{t\in \bT:\
y_j\clesse t\clesse y_{j+1}  \},
\  j=1,2,\ldots,n-1,
\label{def:ijstar-1}
\\
I_n^*(\yy):=& \  \{t\in \bT:\   y_n\clesse t\clesse y_{1}
\},
\label{def:ijstar-2}
\end{align}
and the corresponding
\enquote{arc maximums}
\begin{equation*}
m_j^*(\yy):= \sup\{F(\yy,t):\ t\in I_j^*(\yy)\}, \quad j=1,2,\ldots,n.
\end{equation*}

The correspondence between $I_k^*(\yy)$ and $I_j(\xx)$, and also between
$m_k^*(\yy)$ and $m_j(\xx)$, can be described easily.
Fix $c\in\bT$ arbitrarily
and let $\yy\in\oLS$.
Then $\pi_c^{-1}(y_1),\ldots,\pi_c^{-1}(y_n)\in[0,1)$, moreover,
setting $x_j:=\pi_c^{-1}(y_j)$, the coordinates of the vector $\xx=(x_1,\ldots,x_n)\in\bR^n$ follow according to the cyclic ordering of the coordinates of $\yy$,
that is, if $y_j \cless c \clesse y_{j+1}$,
then $\xx \in\oSi^{(\si_j)}$.
Extending the definition of $\pi_c$ and its inverse to vectors,
we may write $\pi_c^{-1}(\yy)=\xx\in\oSi^{(\si_j)}$.
Then we find
$\pi_c(I_k(\xx))=\pi_c([x_{\si_j(k)},x_{\si_j(k+1)}])
=I_k^*(\yy)$
for $k=1,\ldots,n-1$,
while $\pi_c( I_0(\xx))\cup I_n(\xx))= I_n^*(\yy)$.
Accordingly, the interval and arc maxima correspond to each other as follows.
\begin{equation}
\label{eq.arcmaxintmax}
m^*_k(\yy)=m_k(\xx) \quad (1\le k \le n-1),\qquad m_n^*(\yy) =\max(m_0(\xx),m_n(\xx)).
\end{equation}
Note that this representation does depend
on the ordering of the $x_k$,
because $I_0(\xx)=[0,x_{\si_j(1)}]=[0,\pi_c^{-1}(y_{j+1})]=\pi_c^{-1}([c,y_{j+1}])$,
and $I_n(\xx)=[x_{\si_j(n)},1] = [\pi_c^{-1}(y_j),1]=\pi_c^{-1}([y_j,c])$.

In \cite{Sbornik, JMAA, Ural}
we have already investigated
the following minimax and maximin problems
on the interval $[0,1]$.
\begin{align*}
& \mol(\xx):= \
\max_{j=0,1,\ldots,n} m_j(\xx)
=\sup\{F(\xx,r):\ r\in[0,1]\},
\quad
\mul(\xx):=
\min_{j=0,1,\ldots,n} m_j(\xx),
\\
& M(\oSi):=
\inf\{\mol(\xx):\ \xx\in \oSi\},
\qquad
m(\oSi):=
\sup\{\mul(\xx):\ \xx\in \oSi\}.
\end{align*}
The analogous quantities on the torus are
\begin{align*}
& \mol^*(\yy):= \
\max_{j=1,\ldots,n} m_j^*(\yy)
=\sup\{F(\yy,t):\ t\in\bT\},
\quad
\mul^*(\yy):=
\min_{j=1,\ldots,n} m_j^*(\yy),
\\
& M^*( \oLS):=
\inf\{\mol^*(\yy):\ \yy\in \oLS\},
\quad
m^*( \oLS):= \
\sup\{\mul^*(\yy):\ \yy\in \oLS\}.
\end{align*}

Note that
$m_n^*(\pi_c(\xx))=\max\left(m_n(\xx),m_0(\xx)\right)$,
\begin{equation}
\label{eq:molstarvsmol}
\mol^*(\pi_c(\xx))=\mol(\xx),
\end{equation}
and in view of \eqref{eq:LSSsigma} we also have\footnote{In fact, this formula provides an alternative way to prove Theorem \ref{thm:minimax},
but not of Theorem \ref{thm:maximin},
so we have decided to follow the forthcoming approach.}
$M^*(\oLS)=\min_{j=1,\ldots,n} M(\oS^{(\si_j)})$.
However, there is no similar easy formula for $m^*(\oLS)$.

\section{Continuity results}

In this section, we collect the continuity properties
of maximum functions.
First, we recall Lemma 3.3 from \cite{JMAA}:
\begin{lemma}
\label{lem:mbarcont}
Let $n\in \NN$, $\nu_j>0$ ($j=1,\ldots,n$),
let $J$ be an $n$-field function on $[0,1]$, and
let $K$ be a kernel function on $[-1,1]$.

Then $\mol : [0,1]^n \to \RR$ is continuous.
\end{lemma}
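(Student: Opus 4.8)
The plan is to prove the three assertions \emph{(i)} $\mol$ is everywhere finite, \emph{(ii)} $\mol$ is lower semicontinuous, and \emph{(iii)} $\mol$ is upper semicontinuous on the compact cube $[0,1]^n$; continuity of $\mol$ then follows, and \emph{(iii)} is where the substance lies. The one structural fact I would record first is that the pure sum of translates $f(\xx,r)=\sum_{j=1}^{n}\nu_j K(r-x_j)$ is \emph{jointly continuous as a $\uR$-valued function} on $[0,1]^n\times[0,1]$: indeed $(\xx,r)\mapsto r-x_j$ is continuous into $[-1,1]$, $K$ is continuous from $[-1,1]$ into $\uR$ (as noted after \eqref{eq:Kzero}), and addition is continuous on $\uR$ because the summands never take the value $+\infty$ (we have $\sup K<\infty$). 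Since $K$ is real-valued on $(-1,0)\cup(0,1)$, the value $K(s)=-\infty$ is possible only at $s\in\{-1,0,1\}$; hence for $r\in[0,1)$ one has $f(\xx,r)=-\infty$ only when $r=x_j$ for some $j$, or when $r=0$ and some $x_j=1$, and in both cases the set of such $r$ meets $[0,1)$ in at most $n$ points (in the second case the offending $x_j=1$ lies outside $[0,1)$). As $J$ is finite at more than $n$ points of $[0,1)$, there is thus some $r_0\in[0,1)$ with $J(r_0)$ and $f(\xx,r_0)$ both finite, so $\mol(\xx)\ge F(\xx,r_0)>-\infty$; and $F(\xx,r)=J(r)+f(\xx,r)\le\sup_{\TT}J+\sum_j\nu_j\sup K<\infty$ for every $r$, so $\mol(\xx)$ is finite. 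This proves \emph{(i)} and justifies writing $\mol:[0,1]^n\to\RR$.

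Assertion \emph{(ii)} is then immediate: for each fixed $r\in[0,1]$ the map $\xx\mapsto F(\xx,r)=J(r)+f(\xx,r)$ is lower semicontinuous --- it is identically $-\infty$ if $J(r)=-\infty$, and continuous into $\uR$ otherwise --- and $\mol=\sup_{r\in[0,1]}F(\cdot,r)$ is a pointwise supremum of lower semicontinuous functions, hence lower semicontinuous.

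For \emph{(iii)} I would argue by sequences, combining compactness of $[0,1]$ with the joint continuity of $f$ and a comparison with $\mol$ at the limit node that circumvents the absence of semicontinuity of $J$. Fix $\xx_0$, pick $\xx_k\to\xx_0$ with $\mol(\xx_k)\to L:=\limsup_{\xx\to\xx_0}\mol(\xx)$, and choose $r_k\in[0,1]$ with $F(\xx_k,r_k)\ge\mol(\xx_k)-1/k$; since $\mol(\xx_k)$ is finite this forces $F(\xx_k,r_k)$, hence also $J(r_k)$ and $f(\xx_k,r_k)$, to be finite, and $F(\xx_k,r_k)\to L$. Passing to a subsequence, $r_k\to r^*\in[0,1]$. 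If $f(\xx_0,r^*)=-\infty$, then joint continuity of $f$ gives $f(\xx_k,r_k)\to-\infty$, so $F(\xx_k,r_k)\le\sup_{\TT}J+f(\xx_k,r_k)\to-\infty$ and $L=-\infty\le\mol(\xx_0)$. If instead $f(\xx_0,r^*)$ is finite, then $f(\xx_0,r_k)$ is finite for large $k$, and writing $J(r_k)=F(\xx_k,r_k)-f(\xx_k,r_k)$ we obtain
\[
\mol(\xx_0)\ \ge\ F(\xx_0,r_k)\ =\ J(r_k)+f(\xx_0,r_k)\ =\ F(\xx_k,r_k)+\bigl(f(\xx_0,r_k)-f(\xx_k,r_k)\bigr),
\]
whose right-hand side tends to $L$, because $F(\xx_k,r_k)\to L$ while $f(\xx_0,r_k)\to f(\xx_0,r^*)$ and $f(\xx_k,r_k)\to f(\xx_0,r^*)$ cancel; hence $\mol(\xx_0)\ge L$. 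This is \emph{(iii)}, and together with \emph{(ii)} it yields continuity of $\mol$.

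The only genuine obstacle is that $J$ is merely bounded above, not semicontinuous, so $F(\xx,\cdot)$ need not be upper semicontinuous in $r$, and the supremum $\mol(\xx)=\sup_r F(\xx,r)$ could a priori jump upward in the limit. The device that neutralizes this is the comparison $\mol(\xx_0)\ge F(\xx_0,r_k)$ used above; equivalently one may replace $J$ by its upper semicontinuous envelope $\overline{J}(r):=\limsup_{\rho\to r}J(\rho)$, which changes neither the hypotheses nor $\mol$ --- the latter because $f(\xx,\cdot)$ is continuous in $r$, so near any $r$ one finds $\rho$ with $J(\rho)$ close to $\overline{J}(r)$ and $f(\xx,\rho)$ close to $f(\xx,r)$ --- after which $F$ is jointly upper semicontinuous and the standard fact that the maximum of a jointly upper semicontinuous function over a compact set is upper semicontinuous applies directly. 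A secondary, purely bookkeeping point is the possible value $-\infty$ for singular kernels, but since every quantity in sight is bounded above this only ever propagates downward and causes no trouble.
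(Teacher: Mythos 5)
The paper does not actually prove this statement: it is recalled verbatim as Lemma~3.3 of \cite{JMAA}, so there is no in-text argument to compare yours with. Judged on its own, your proof is correct and self-contained. The joint extended continuity of $f$ on $[0,1]^n\times[0,1]$, the finiteness of $\mol$ (at most $n$ points of $[0,1)$ can be singular for $f(\xx,\cdot)$, while $J$ is finite at more than $n$ of them), lower semicontinuity as a supremum of extended continuous functions of $\xx$, and above all the upper semicontinuity step are in order; the decisive device of bounding $\mol(\xx_0)\ge F(\xx_0,r_k)$ at near-maximizers $r_k$ of $F(\xx_k,\cdot)$ and splitting $F(\xx_0,r_k)=F(\xx_k,r_k)+\bigl(f(\xx_0,r_k)-f(\xx_k,r_k)\bigr)$ with all terms eventually finite is exactly what is needed to bypass the fact that $J$ has no semicontinuity whatsoever. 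Note also that your route is independent of the simplex decomposition and of the extended-continuity results for the individual interval maxima $m_j$ (Lemma~3.1 and Proposition~3.6 of \cite{JMAA}) which the surrounding text invokes elsewhere, so it is a genuine, if modest, structural alternative to simply importing the cited lemma.

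Two small caveats. First, your finiteness count reads the hypothesis \enquote{$n$-field function on $[0,1]$} as \enquote{finite at more than $n$ points of $[0,1)$}; this is the form in which the lemma is applied in this paper (restrictions of $1$-periodic field functions), and it is the reading under which real-valuedness of $\mol$ is actually guaranteed when $K(-1)=K(1)=-\infty$ is allowed, so you should state that reading explicitly rather than leave it implicit. Second, in the closing aside the envelope must be $\overline{J}(r):=\max\bigl(J(r),\limsup_{\rho\to r}J(\rho)\bigr)$ rather than the punctured $\limsup$ alone: for a field function finite only on a discrete set (such as the one in the paper's nonsingular counterexample) the punctured envelope is identically $-\infty$ and the supremum $\mol$ would change. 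This slip does not affect your main argument, which is complete without the envelope remark.
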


From this we easily deduce the following.
\begin{proposition}
\label{prop:molstarcontinuous}
Let $n\in \NN$, $\nu_j>0$ ($j=1,\ldots,n$),
let $J$ be a field function and
let $K$ be a kernel function.

Then $\mol^* : \bT^n \to \RR$ is continuous.
\end{proposition}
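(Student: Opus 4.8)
The plan is to reduce the continuity of $\mol^*$ on $\bT^n$ to the already-established continuity of $\mol$ on $[0,1]^n$ (Lemma~\ref{lem:mbarcont}) via the cutting-up maps $\pi_c$, exploiting the identity \eqref{eq:molstarvsmol}, namely $\mol^*(\pi_c(\xx))=\mol(\xx)$. Fix an arbitrary point $\yy_0\in\bT^n$; we want continuity at $\yy_0$. The idea is to choose the cutting point $c\in\bT$ so that $c$ is \emph{not} one of the coordinates of $\yy_0$ (this is possible since the coordinates are finitely many points of $\bT$, while $\bT$ is infinite). Then $\pi_c^{-1}$ is continuous at each coordinate of $\yy_0$ (recall $\pi_c^{-1}$ is continuous on $\bT\setminus\{c\}$), hence the coordinatewise map $\pi_c^{-1}:\bT^n\to[0,1)^n$ is continuous at $\yy_0$, and maps a small neighbourhood $U$ of $\yy_0$ into $[0,1)^n\subset[0,1]^n$.

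On that neighbourhood we have $\mol^*(\yy)=\mol^*(\pi_c(\pi_c^{-1}(\yy)))=\mol(\pi_c^{-1}(\yy))$ by \eqref{eq:molstarvsmol}. Thus on $U$ the function $\mol^*$ is the composition of the map $\pi_c^{-1}$, continuous at $\yy_0$ and landing in $[0,1]^n$, with the function $\mol:[0,1]^n\to\RR$, which is continuous by Lemma~\ref{lem:mbarcont} (applicable because a field function on $\bT$ restricts to an $n$-field function on $[0,1]$, as noted in the excerpt, and the kernel function is the same). A composition of a map continuous at a point with a globally continuous function is continuous at that point, so $\mol^*$ is continuous at $\yy_0$. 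Since $\yy_0\in\bT^n$ was arbitrary, $\mol^*$ is continuous on all of $\bT^n$.

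A few small points need care. First, one must check that $\pi_c^{-1}$ really is continuous at $\yy_0$ and not merely that each coordinate avoids $c$: continuity of the vector map at $\yy_0$ follows coordinatewise from continuity of the scalar map $\pi_c^{-1}$ at each $(y_0)_k\neq c$, which is exactly the stated property of $\pi_c^{-1}$. Second, the composition formula $\mol^*(\yy)=\mol(\pi_c^{-1}(\yy))$ uses $\pi_c(\pi_c^{-1}(\yy))=\yy$, valid because $\pi_c^{-1}$ by definition lands in $[0,1)$ where $\pi_c$ is a genuine bijective inverse; and it uses \eqref{eq:molstarvsmol} with $\xx=\pi_c^{-1}(\yy)$. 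Third, one should note that $\mol$ in Lemma~\ref{lem:mbarcont} is defined with respect to the \emph{ordered} interval maxima, but $\mol(\xx)=\sup\{F(\xx,r):r\in[0,1]\}$ is in fact the maximum over all $j$ and is independent of which simplex $\xx$ lies in, so no compatibility issue with orderings arises here.

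The main obstacle — really the only subtlety — is the discontinuity of $\pi_c^{-1}$ at the single point $c$: one cannot use one fixed $c$ uniformly over all of $\bT^n$, so the argument must be local, choosing $c$ depending on the target point $\yy_0$. Once one commits to a pointwise argument and picks $c$ off the coordinates of $\yy_0$, everything else is a routine composition of continuous maps. It is worth remarking explicitly in the proof that this is why the statement is naturally proved pointwise rather than by a single global change of variables.
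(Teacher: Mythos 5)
Your proposal is correct and follows essentially the same route as the paper's own proof: fix the point, choose the cut $c$ off its coordinates, use the local continuity of $\pi_c^{-1}$ together with the identity \eqref{eq:molstarvsmol}, and conclude via the continuity of $\mol$ from Lemma \ref{lem:mbarcont}. No gaps; your remarks on the necessity of a pointwise choice of $c$ match the paper's treatment.
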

\begin{proof}
Let $\avec\in \bT^n$,
$\avec=(a_1,\ldots,a_n)$
be fixed
and $c\in\bT\setminus\{a_1,\ldots,a_n\}$.
We show that $\mol^*$ is continuous in a
small neighborhood of $\avec$.
Let $\delta_0<\min_{j=1,\ldots,n}
\mathrm{dist}_\bT\left(c,a_j\right)$.
We pull back $\yy$ to $[0,1]^n$ coordinatewise:
$x_j:=\pi_c^{-1}(y_j)$, $j=1,\ldots,n$.
Then $x_j\in(0,1)$ and $x_j=x_j(y_j)$
is continuous
(since $\mathrm{dist}_\bT\left(y_j,a_j\right)<\delta_0$,
and $\pi_c^{-1}$ is continuous save at $c$),
so $\xx:=\xx(\yy):=\pi_c^{-1}(\yy)$ is changing continuously
in the given neighborhood of $\avec$.
We may write that $F(\yy,t)=F(\pi_c(\xx(\yy)),\pi_c(r))$
where $\pi_c(r)=t$
and after simplifying the notation,
we simply write $F(\yy,t)=F(\xx(\yy),r)$,
with $t\in \TT $ corresponding to $r:=\pi_c^{-1}(t) \in [0,1)$.

Therefore, with \eqref{eq:molstarvsmol},
we see that $\mol^*(\yy)=\mol(\xx(\yy))$.

The continuity of $\mol^*$
at $\avec$
follows from the continuity of
$\xx(\yy)$ at $\avec$ and the continuity of
$\mol$ at $\xx$,
the latter coming from Lemma \ref{lem:mbarcont}.
\end{proof}

We show continuity of the
arc maxima functions $m_j^*$
in some important cases.

\begin{proposition}
\label{prop:mjstarcont}
Let $n \in \NN$ and $k\in\{1,2,\ldots,n\}$ be fixed
and let $K_1,\ldots,K_n$ be arbitrary kernel functions.
\begin{enumerate}[label={(\alph*)}]
\item
\label{mjstarcont:parta}
Suppose that $J$ is an arbitrary $n$-field
function and all $K_j$,
$j=1,2,\ldots,n$ satisfy \eqref{cond:infty}.
Then  $m_k^*$ is extended
continuous on $\oLS$.
\item
\label{mjstarcont:partb}
Suppose that $J$ is an extended continuous field
function.
Then  $m_k^*$ is extended continuous on $\oLS$.
\item
\label{mjstarcont:partc}
If $J$ is an upper semicontinuous $n$-field function,
then
$m_k^*$ is upper semicontinuous on $\oLS$.
\end{enumerate}
\end{proposition}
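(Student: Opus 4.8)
The plan is to prove the three parts of Proposition \ref{prop:mjstarcont} by the same device used for Proposition \ref{prop:molstarcontinuous}: cut the torus at a suitable point $c$, pull back to the interval, and invoke the corresponding continuity statements for interval maxima already available from \cite{JMAA}. The one genuinely new issue, and the place where all the care is needed, is that the correspondence \eqref{eq.arcmaxintmax} behaves differently for the indices $k=1,\ldots,n-1$ and for $k=n$: for $k\le n-1$ one has $m_k^*(\yy)=m_k(\xx)$ on a single pulled-back interval, whereas $m_n^*(\yy)=\max(m_0(\xx),m_n(\xx))$ involves the two \emph{boundary} intervals $I_0(\xx)$ and $I_n(\xx)$, and moreover the choice of cut point $c$ that makes the pullback continuous near a given $\avec$ is constrained. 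So the main obstacle is handling $m_k^*$ near points $\yy$ where some coordinate $y_j$ equals (or is forced to pass through) the cut point; this is exactly the dimensionality/boundary subtlety flagged in the introduction.

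First I would fix $\avec=(a_1,\dots,a_n)\in\oLS$ and reduce to showing (extended / upper-semi-) continuity of $m_k^*$ on a small neighborhood of $\avec$ in $\oLS$. For this I would choose the cut point $c\in\TT$ lying strictly inside one of the closed arcs $I_j^*(\avec)$ — concretely, pick $j$ so that the index $k$ we care about is \emph{not} the arc straddling $c$; since $k$ is fixed and there are $n$ arcs, such a choice is always available (when $k\le n-1$ put $c$ in the interior of $I_n^*(\avec)$, say the midpoint of that arc if it is nondegenerate, and if $I_n^*(\avec)$ is degenerate choose $c$ in the interior of any nondegenerate arc and relabel). With $c$ chosen in the interior of a fixed arc $I_{j_0}^*(\avec)$, for $\yy$ close enough to $\avec$ in $\oLS$ the pullback $\xx(\yy)=\pi_c^{-1}(\yy)$ lands in a fixed closed simplex $\oSi^{(\si_{j_0})}$ and depends continuously on $\yy$, exactly as in the proof of Proposition \ref{prop:molstarcontinuous}. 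Under this correspondence the arc $I_k^*(\yy)$ with $k\ne j_0$ is carried to a single interval $I_\ell(\xx(\yy))$ for an appropriate $\ell\in\{0,1,\dots,n\}$ (an interior one, $1\le\ell\le n-1$, when $k$ is not the arc opposite to $c$; a boundary one only in the straddling case we have arranged to avoid), and the field function transported to $[0,1]$ via $\pi_c$ is, respectively, singular at all nodes (in case \ref{mjstarcont:parta}), extended continuous (in case \ref{mjstarcont:partb}), or upper semicontinuous (in case \ref{mjstarcont:partc}), since $\pi_c$ is continuous and these properties are preserved by the change of variable, and the kernels $K_j$ keep their respective properties. Hence $m_k^*(\yy)=m_\ell(\xx(\yy))$ near $\avec$, and continuity (resp. u.s.c.) of $m_k^*$ at $\avec$ follows from the corresponding property of $m_\ell$ on the interval composed with the continuity of $\yy\mapsto\xx(\yy)$.

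It remains to import the interval statements for $m_\ell$: that $m_\ell$ is extended continuous on $\oSi^{(\sigma)}$ when all kernels are singular (case \ref{mjstarcont:parta}), or when $J$ is extended continuous (case \ref{mjstarcont:partb}), and that $m_\ell$ is upper semicontinuous on $\oSi^{(\sigma)}$ when $J$ is merely upper semicontinuous (case \ref{mjstarcont:partc}); these are precisely the interval-case results we are entitled to cite from \cite{JMAA} (the u.s.c. statement is elementary: $m_\ell$ is an infimum over shrinking neighbourhoods of sups of the u.s.c. function $F$, or one quotes the stated interval lemma). The only point requiring a separate word is that the value of $m_k^*(\avec)$ computed with the chosen cut $c$ does not depend on that choice — but this is immediate from the definition of $m_k^*$ as a supremum of $F(\yy,\cdot)$ over the intrinsically defined arc $I_k^*(\yy)\subset\TT$, which does not reference $c$ at all; the cut is only an auxiliary device for the local argument. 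I expect the write-up to be short once the bookkeeping of which index $\ell$ corresponds to $k$ under $\si_{j_0}$ is spelled out, and the genuine care is entirely in the choice of $c$ away from the relevant arc, which sidesteps the boundary term $\max(m_0,m_n)$ in \eqref{eq.arcmaxintmax}.
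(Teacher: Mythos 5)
Your overall strategy is the paper's: cut the torus at a point $c$ kept away from the arc $I_k^*$, pull back to $[0,1]$ via $\pi_c^{-1}$, and quote the interval-case continuity (resp.\ upper semicontinuity) of the $m_\ell$ from \cite{JMAA}, composing with the continuity of $\yy\mapsto\xx(\yy)$. Up to the bookkeeping of indices this is exactly how the paper argues parts \ref{mjstarcont:parta} and \ref{mjstarcont:partc}.

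However, there is a genuine gap in the step you rely on to ``sidestep'' the boundary term of \eqref{eq.arcmaxintmax}: the claim that one can always place $c$ in the interior of a nondegenerate arc $I_{j_0}^*(\avec)$ with $j_0\ne k$ is false at the fully degenerate points of $\oLS$. If $a_1=\dots=a_n$, then every arc other than $I_k^*(\avec)$ is a single point (their lengths sum to $1$, so if all of them are degenerate then $I_k^*(\avec)=\bT$), and \emph{every} admissible cut point $c$ lies inside $I_k^*(\avec)$; for node systems $\yy$ near such an $\avec$ the arc $I_k^*(\yy)$ does straddle the cut, its pullback is $I_0(\xx)\cup I_n(\xx)$, and the correct local formula is $m_k^*(\yy)=\max\bigl(m_0(\xx(\yy)),m_n(\xx(\yy))\bigr)$, not $m_k^*(\yy)=m_\ell(\xx(\yy))$ for a single $\ell$. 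Since the Proposition asserts (extended) continuity on all of $\oLS$, which contains these diagonal points, your argument as written does not cover them. The paper handles exactly this situation as a separate case ($I_k^*(\avec)=\bT$): choose $c\in\bT\setminus\{a_k\}$, use the second half of \eqref{eq.arcmaxintmax}, and conclude via continuity (resp.\ upper semicontinuity) of the maximum of the two boundary interval maxima $m_0$ and $m_n$ --- the maximum of two extended continuous functions is extended continuous, and of two upper semicontinuous functions is upper semicontinuous. With this extra case added, your proof closes; without it, the claim ``such a choice is always available'' is where the argument fails.
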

\begin{proof}
To see \ref{mjstarcont:parta},
let $\avec\in \oLS$
be fixed.
If $I_k^*(\avec)\ne \bT$, then
let $c\in \bT\setminus I_k^*(\avec)$,
$c\not\in\{a_1,\ldots,a_n\}$.
Then there is a $j\in\{0,1,\ldots,n-1\}$
(see \eqref{eq:LSSsigma})
such that $\pi_c^{-1}(\avec) \in \oSi^{(\sigma_j)}$.
Moreover, $0<\pi_c^{-1}(a_1),\ldots,\pi_c^{-1}(a_n)<1$.

So, with
\eqref{eq.arcmaxintmax}
we can write $m_k^*(\yy)=m_k(\xx)$
when $\yy\in \oLS$ is close to $\avec$
and $\xx=\pi_c^{-1}(\yy)$, $\xx\in \oSi^{(\sigma_j)}$.
Since $\pi_c^{-1}:\oLS \rightarrow \oSi$ is
continuous near $\avec$,
and by Lemma 3.1
from \cite{JMAA},
$m_j:\oSi\rightarrow\uR$
is continuous,
we obtain the assertion of this
part.

If $I_k^*(\avec)=\bT$, then
we follow the same steps
with $c\in \bT\setminus \{a_k\}$, but
the arc $I_k^*(\avec)$
--and all arcs $I_k^*(\yy)$ with $\yy$ close to $\avec$--
necessarily split into two intervals via
$\pi_c^{-1}(\cdot)$,
so we use the second half of \eqref{eq.arcmaxintmax},
and we get
$m_k^*(\yy)=\max\big(
m_0(\xx),m_n(\xx)
\big)$
when $\yy\in \oLS$ is close to $\avec$.
Continuing with the same steps,
we obtain the assertion.

The proof of \ref{mjstarcont:partb}
is straightforward.

The proof of
\ref{mjstarcont:partc}
follows the same steps as
that of \ref{mjstarcont:parta},
using
Proposition 3.6 (a) from \cite{JMAA} and
that the maximum of
two upper semicontinuous
function is again upper semicontinuous.
\end{proof}

\begin{example}
If $J$ is not continuous,
and $K_j$ does not satisfy \eqref{cond:infty}
then $m_j^*$ is not continuous on
$\oLS$.

To see this, take $y^*\in\bT$,
where $J$ is not continuous:
we may assume $y^*=1/2$.
Let $n=2$ and consider $\xx=(x,x)$ where $x \approx 1/2$.
Then $m_1^*(\xx)=\sup_{I_1^*(\xx)} F(\xx,\cdot)= F(\xx,x)=J(x)+2K(0)$
and $m_1^*((1/2,1/2))=F((1/2,1/2),1/2)=J(1/2)+2K(0)$.
Hence $m_1^*(\cdot)$ is not continuous
at $(1/2,1/2)$.

\end{example}

Let us remark the Berge proved a maximum theorem about partial maxima of bivariate
functions, see, e.g., \cite{Berge1963},
but that result is not applicable here
since his approach requires
bivariate continuity.
In our case, $J$ may be discontinuous, and continuity of $m_j$ or $\mol$ is thus nontrivial.

\section{Perturbation lemmas}

The first perturbation lemma
describes the behavior of sum of translates
functions when two nodes are pulled apart.
It appeared in several forms,
e.g.,
in \cite{TLMS2018} (see Lemma 11.5),
\cite{Rankin}, Lemma 10 on p.~1069,
or \cite{Sbornik}, Lemma 3.1.
A similar  form can be found in
\cite{Fenton}
(see around formula (15) too).

\begin{lemma}[\textbf{Perturbation lemma}]
\label{lem:widening}
Let $K$ be a kernel function
which is periodic \eqref{Kperiodic}.
Let $0\le \alpha<a<b<\beta\le 1$
and $p, q >0$.
Set
\begin{equation}
\label{eq:mudef}
\mu:=\frac{p(a-\alpha)}{q(\beta-b)}.
\end{equation}
\begin{enumerate}[label={(\alph*)}]
\item
\label{pertlem:parta}
If $\mu=1$, then
\begin{equation}
\label{eq:wideningarc}
pK(t-\alpha)+qK(t-\beta) \le pK(t-a)+qK(t-b).
\end{equation}
holds for every $t\in [0,\alpha] \cup [\beta,1]$.
\item
\label{pertlem:partb}
Additionally, if $K$ is strictly concave,
then
\eqref{eq:wideningarc}
holds with strict inequality.
\item
\label{pertlem:partc}
If $\mu=1$, then
\begin{equation}
\label{eq:shorteningarc}
pK(t-\alpha)+qK(t-\beta) \ge pK(t-a)+qK(t-b).
\end{equation}
holds for every $t\in [a,b]$.
\item
\label{pertlem:partd}
Additionally, if $K$ is strictly concave,
then
\eqref{eq:shorteningarc}
holds with strict inequality.
\end{enumerate}
\end{lemma}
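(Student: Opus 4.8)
The plan is to reduce everything to a one-variable convexity/concavity inequality and then to exploit the periodicity \eqref{Kperiodic} to control the "cross" terms $K(t-\beta)$ versus $K(t-b)$ when $t$ lies to the left of $\alpha$ (or to the right of $\beta$). First I would observe that, after the substitution $s=t-\beta$ (which, for $t\in[\beta,1]$, ranges over an interval inside $[0,1]$, and for $t\in[0,\alpha]$ gives $s\in[-\beta,\alpha-\beta]\subset[-1,0)$, so that periodicity lets us replace $K(t-\beta)$ by $K(t-\beta+1)=K(t-(\beta-1))$ with $\beta-1\in[-1,0]$), the four points $\alpha,a,b,\beta$ (some of them possibly shifted by $1$) all become arguments at which $K$ is evaluated at the \emph{same} running point $t$, now lying in a region where $K$ is concave on each side. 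The key algebraic fact is that the map $x\mapsto K(t-x)$ is concave in $x$ on any interval avoiding the singularity at $t$, and the condition $\mu=1$ is exactly the statement that $(a,b)$ is obtained from $(\alpha,\beta)$ by a mass-preserving inward contraction: $p\,a+q\,b=p\,\alpha+q\,\beta$ is equivalent to $p(a-\alpha)=q(\beta-b)$, i.e. $\mu=1$.

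The main steps, in order, would be: (1) Fix $t\in[0,\alpha]\cup[\beta,1]$ and rewrite the inequality \eqref{eq:wideningarc} as $p\big(K(t-\alpha)-K(t-a)\big)\le q\big(K(t-b)-K(t-\beta)\big)$. (2) Use periodicity to bring $t-\alpha,t-a,t-b,t-\beta$ (each modulo $1$) into a common interval on which $K$ is concave and on which the four shifted points $\alpha',a',b',\beta'$ satisfy $\alpha'<a'<b'<\beta'$ with $p\,a'+q\,b'=p\,\alpha'+q\,\beta'$ — here one has to check the two cases $t\le\alpha$ and $t\ge\beta$ separately, since in the first case $t-\beta$ and $t-b$ need the $+1$ shift while $t-\alpha,t-a$ do not, and symmetrically in the second case. (3) Apply concavity of $K$ on that interval: writing $a'=(1-\lambda_1)\alpha'+\lambda_1 c$ and $b'=(1-\lambda_2)c+\lambda_2\beta'$ for a suitable intermediate point $c$ (the natural choice being $c=t$ itself if $t$ were inside, but since $t$ is outside one instead uses the three-point concavity inequality directly on the ordered quadruple), the weighted difference quotient inequality gives precisely the claim. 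For the strict versions \ref{pertlem:partb} and \ref{pertlem:partd}, strict concavity upgrades each application of the three-point inequality to a strict one, provided $\alpha<a$ and $b<\beta$ strictly, which is part of the hypothesis. (4) Part \ref{pertlem:partc} is the mirror image: for $t\in[a,b]$ the running point now lies \emph{between} the contracted nodes, the relevant shifts by $1$ are arranged so that concavity yields the reversed inequality, and \ref{pertlem:partd} again follows from strict concavity.

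The cleanest way to package steps (2)–(3) is probably via the following standard lemma on concave functions: if $g$ is concave on an interval $I$ and $u<v<v'<u'$ lie in $I$ with $p(v-u)=q(u'-v')$ for some $p,q>0$, then $p\,g(u)+q\,g(u')\le p\,g(v)+q\,g(v')$, with strict inequality when $g$ is strictly concave; the reversed inequality holds when the evaluation point is between $v$ and $v'$ in the appropriate sense. Applying this with $g=K$ and with $u,v,v',u'$ being the shifted copies of $\alpha,a,b,\beta$ finishes each part.

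I expect the main obstacle to be purely bookkeeping: tracking which of the four arguments $t-\alpha,t-a,t-b,t-\beta$ must be shifted by $+1$ (to land in the concavity interval $(-1,0)$ or $(0,1)$ and on the correct side of the singularity), and verifying that after these shifts the ordering $\alpha'<a'<b'<\beta'$ and the mass-balance relation are preserved — in particular that no shifted argument lands exactly at $0$ or straddles the singularity, which is where one uses $0\le\alpha$, $\beta\le 1$, and the strict interior position $a,b\in(\alpha,\beta)$. Once the correct interval is identified in each of the (at most) three sub-cases ($t\in[0,\alpha]$, $t\in[\beta,1]$, $t\in[a,b]$), the concavity estimate itself is immediate and the strict-concavity refinements require no extra work beyond noting that the nodes are genuinely moved.
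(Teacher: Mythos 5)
The paper does not actually prove Lemma \ref{lem:widening}; it is quoted from earlier work (Lemma 11.5 of \cite{TLMS2018}, Lemma 3.1 of \cite{Sbornik}, cf.\ also \cite{Rankin} and \cite{Fenton}), so your argument has to stand on its own. Its core is sound: $\mu=1$ is exactly the barycenter identity $p\alpha+q\beta=pa+qb$, and both \eqref{eq:wideningarc} and \eqref{eq:shorteningarc} reduce to the weighted majorization inequality you isolate (for $g$ concave on an interval containing $u<v<v'<u'$ with $p(v-u)=q(u'-v')$ one has $p\,g(u)+q\,g(u')\le p\,g(v)+q\,g(v')$, strictly for strictly concave $g$), applied to $g=K$ at the four arguments $t-\alpha,\,t-a,\,t-b,\,t-\beta$ brought into one concavity window; the strict versions \ref{pertlem:partb}, \ref{pertlem:partd} follow since $\alpha<a$, $b<\beta$, the inner arguments never hit $0,\pm1$, and a value $-\infty$ at an outer argument only makes the inequality trivially strict.

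The genuine flaw is in your step (2) bookkeeping, and as written it would break part \ref{pertlem:parta}. For $t\in[0,\alpha]$ \emph{all four} arguments already lie in $[-1,0]$ (since $t-\beta\ge-1$ and $t-\alpha\le0$), and for $t\in[\beta,1]$ they all lie in $[0,1]$; no shift is needed in either subcase, and in particular \ref{pertlem:parta}--\ref{pertlem:partb} use only concavity on $(-1,0)$ and $(0,1)$, not periodicity. Shifting only $t-\beta$ and $t-b$ by $+1$, as you propose for $t\le\alpha$, would scatter the four points on both sides of $0$, i.e.\ across the possible singularity, where $K$ need not be concave, so your key lemma could not be invoked there. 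The partial shift is what is needed precisely in parts \ref{pertlem:partc}--\ref{pertlem:partd}, $t\in[a,b]$: there $t-\alpha,t-a\in[0,1)$ while $t-b,t-\beta\in(-1,0]$, and replacing the latter by $t-b+1,\,t-\beta+1$ (this is the only place \eqref{Kperiodic} enters) puts all four into $[0,1]$ with ordering $t-a\le t-\alpha\le t-\beta+1\le t-b+1$ and equal weighted barycenters; now the $(\alpha,\beta)$-arguments form the \emph{inner} pair and the $(a,b)$-arguments the \emph{outer} pair, so the very same lemma -- not a separate \enquote{evaluation point in between} variant, which as stated has no precise meaning -- yields \eqref{eq:shorteningarc}. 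With this reassignment of the cases the proof closes correctly.
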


\begin{lemma}[\textbf{Trivial Lemma}]
\label{lem:trivi-comprehensive}
Let $f, g, h:D \to \uR$ be functions on some Hausdorff topological space $D$ and
assume that
\begin{enumerate}[label={(\roman*)}]
\item
\label{trivlemma:parti}
either $f,g,h$ are all upper semicontinuous,
\item
\label{trivlemma:partii}
or $f, g$ are extended continuous and
$h$ is locally upper bounded, but otherwise arbitrary.
\end{enumerate}
Let $\emptyset\neq A \subseteq B \subseteq D$ be arbitrary.
Assume
\begin{equation}
\label{eq:trivia}
f(t)<g(t) \quad\text{ for all } t\in A.
\end{equation}
If $A\subseteq B$ is a compact set,
then
\begin{equation}
\label{eq:trivia-plus}
\sup_A (f+h) < \sup_B (g+h) \qquad
\text{ unless} \qquad
h\equiv-\infty \quad\text{on }\quad A.
\end{equation}
\end{lemma}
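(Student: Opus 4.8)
The statement is a "uniform strict inequality passes to suprema" lemma, and the natural route is a compactness-plus-covering argument. First I would dispose of the trivial degenerate branch: if $h\equiv-\infty$ on $A$ there is nothing to prove, so assume $h(t_0)>-\infty$ for some $t_0\in A$; then $\sup_B(g+h)\geq g(t_0)+h(t_0)>f(t_0)+h(t_0)>-\infty$ provided $f(t_0)>-\infty$, and if $f(t_0)=-\infty$ one still has $\sup_B(g+h)\geq g(t_0)+h(t_0)>-\infty=\sup$ contributions from that point — so the right-hand side is genuinely finite-from-below, which is what makes a strict inequality meaningful. The real content is to bound $\sup_A(f+h)$ strictly below $\sup_B(g+h)$.

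The key step is to produce, using compactness of $A$, a single constant $\eta>0$ and a point where $g+h$ beats $f+h$ everywhere on $A$ by at least $\eta$ after a suitable localization. Concretely, fix any $t_0\in A$ with $h(t_0)>-\infty$ (exists unless $h\equiv-\infty$ on $A$). Then $\sup_B(g+h)\geq g(t_0)+h(t_0)$. So it suffices to show $\sup_A(f+h)<g(t_0)+h(t_0)$. Hmm — but that is false in general because $h$ can be large elsewhere on $A$. So instead I would argue directly on $A$: for each $t\in A$, \eqref{eq:trivia} gives $f(t)<g(t)$; I want $\sup_A(f+h)<\sup_B(g+h)$. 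Cover $A$ by neighborhoods on which $f+h$ is controlled from above by something strictly below $g+h$ evaluated at a nearby point of $B$. In case \ref{trivlemma:parti}, $f+h$ is u.s.c., so $\sup_A(f+h)$ is attained at some $t_1\in A$ (compactness); then $\sup_A(f+h)=f(t_1)+h(t_1)<g(t_1)+h(t_1)\leq\sup_B(g+h)$, and we are done — but we must check $h(t_1)>-\infty$: if $h(t_1)=-\infty$ then $\sup_A(f+h)=-\infty$ and, since $h\not\equiv-\infty$ on $A$, the right side exceeds $-\infty$, so the inequality holds anyway. This handles \ref{trivlemma:parti} cleanly via the u.s.c. maximum principle on a compact set.

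For case \ref{trivlemma:partii}, $f+h$ need not be u.s.c. (only $f,g$ are extended continuous, $h$ merely locally bounded above), so the sup on $A$ may not be attained and the above shortcut fails. Here I would use a covering argument: by continuity of $f$ and $g$ and \eqref{eq:trivia}, for each $t\in A$ pick an open $U_t\ni t$ and $\varepsilon_t>0$ with $f(s)<g(t)-\varepsilon_t$ for all $s\in U_t$ and, shrinking $U_t$, also $h$ bounded above on $U_t$ by some finite $M_t$ — wait, that still doesn't directly give the bound because $g(t)$ is a fixed number while $h$ varies. Better: use that $g$ is continuous to get $g(s)>g(t)-\varepsilon_t$ on $U_t$ as well, so on $U_t$ we have $f(s)+h(s)<g(t)-\varepsilon_t+h(s)<g(s)+h(s)-\varepsilon_t+\varepsilon_t$ — circular. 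The clean fix: on $U_t$, $f(s)+h(s)\le (g(t)-\varepsilon_t)+h(s)$ and also $g(s)+h(s)\ge (g(t)-\varepsilon_t)+h(s)$, hence $\sup_{U_t\cap A}(f+h)\le \sup_{U_t}(g+h)-\varepsilon_t\cdot[\text{something}]$ — not quite. Actually the right statement: $\sup_{U_t\cap A}(f+h)\le \sup_{U_t\cap A}(g+h)-\varepsilon_t$ would need $h$ finite. The honest route is: extract a finite subcover $U_{t_1},\dots,U_{t_N}$ of $A$; set $\varepsilon:=\min_i\varepsilon_{t_i}>0$; then $\sup_A(f+h)=\max_i\sup_{U_{t_i}\cap A}(f+h)$, and on each $U_{t_i}\cap A$ (where we may assume $h\not\equiv-\infty$, else that piece contributes $-\infty$) we get $f+h<g+h-\varepsilon$ pointwise, hence $\sup_{U_{t_i}\cap A}(f+h)\le\sup_{U_{t_i}\cap A}(g+h)-\varepsilon\le\sup_B(g+h)-\varepsilon<\sup_B(g+h)$. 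Taking the max over $i$ finishes it.

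**Main obstacle.** The delicate point is precisely the passage from the pointwise strict inequality $f+h<g+h$ to a strict inequality of suprema when $h$ can take the value $-\infty$ and need not be u.s.c. — one must combine a compactness extraction of a uniform gap $\varepsilon$ from the continuous functions $f,g$ with careful bookkeeping of the $h\equiv-\infty$ degeneracies on pieces of the cover, and verify in the u.s.c. case \ref{trivlemma:parti} that the maximizer, if it sits where $h=-\infty$, still does no harm. The inequalities themselves are routine once the cover and the constant $\varepsilon$ are in place.
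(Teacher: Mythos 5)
Your case \ref{trivlemma:parti} argument is complete and correct: $f+h$ is upper semicontinuous with values in $\RR\cup\{-\infty\}$, hence attains its supremum on the nonempty compact set $A$ at some $t_1$, and your dichotomy $h(t_1)>-\infty$ versus $h(t_1)=-\infty$ (the latter using $h\not\equiv-\infty$ on $A$ together with the fact that \eqref{eq:trivia} forces $g>-\infty$ on $A$, so the right-hand side of \eqref{eq:trivia-plus} exceeds $-\infty$) settles the claim. For comparison: the paper itself gives no argument, citing Lemma 3.2 of \cite{Sbornik} for \ref{trivlemma:parti} and leaving \ref{trivlemma:partii} to the reader, so you are supplying details the authors omit, and your route for \ref{trivlemma:parti} is the standard one.

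In case \ref{trivlemma:partii} your plan (finite subcover, uniform gap $\varepsilon$, separate bookkeeping of the places where $h=-\infty$) is the right one, but as written it has two holes. First, the neighborhood conditions you actually state, namely $f(s)<g(t_i)-\varepsilon_{t_i}$ and $g(s)>g(t_i)-\varepsilon_{t_i}$ on $U_{t_i}$, yield only $f(s)<g(s)$, not the pointwise gap $f(s)<g(s)-\varepsilon_{t_i}$ on which your final chain rests; you need, e.g., $f(s)<g(t_i)-2\varepsilon_{t_i}$ together with $g(s)>g(t_i)-\varepsilon_{t_i}$, which extended continuity does provide (note $g(t_i)>-\infty$ by \eqref{eq:trivia}, and the construction works whether $f(t_i)$ is finite or $-\infty$). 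Second, your concluding step $\sup_B(g+h)-\varepsilon<\sup_B(g+h)$ is valid only when $\sup_B(g+h)<+\infty$; since $B$ is arbitrary, $g+h$ may well be unbounded above on $B$, and that case must be handled separately by observing that $\sup_A(f+h)<+\infty$: $f$ is extended continuous, hence upper bounded on the compact $A$, and $h$ is locally upper bounded, hence upper bounded on $A$ by a covering argument. This is in fact the only point where the hypothesis that $h$ is locally upper bounded is used at all --- your final argument never invokes it, which is the telltale sign that the unbounded case slipped through. Both repairs are short, so after these adjustments (and deleting the acknowledged false starts) the proof is correct.
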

\begin{proof}
The straightforward proof of \ref{trivlemma:parti}
was given in \cite{Sbornik} as Lemma 3.2.
The proof of \ref{trivlemma:partii}
is similar, so we leave it to the reader.
\end{proof}

The following lemma is rather similar to Lemma 4.1. of \cite{Sbornik}.
However, there are several differences, too, in which the below version is stronger than the former version.
First, here we do not assume upper semicontinuity of the field function,
which was made possible by the observation that there is a version of the
Trivial Lemma which relaxes on that condition on $h$ (even if using a little more assumption regarding continuity of $f$ and $g$, which, on the other hand, are clearly available).
Second, we assume non-degeneracy $w_{i+1}>w_i$ only for indices $i\in \cI$, again a delicate novelty in the current version.
Third, we drop the condition that $K$ be monotone, an essentially necessary assumption for the interval $[0,1]$,
but, as is already told in the Introduction, not required in the periodic case.
In view of all these differences, as well as in regard of the slightly different setup of having only $n$ arcs defined
by the $n$ node points (and not $n+1$ intervals),
we will present the full proof of the Lemma,
even if its basic idea and a large part of the details are repeating the former argument.

\begin{lemma}[\textbf{General maximum perturbation lemma on the torus}]
\label{lem:gen-max-perturbation}
Let $n\in\NN$ be a natural number, and
let $\nu_1,\dots, \nu_n>0$
be given positive coefficients.
Let $K$ be a
kernel function on $\TT$, and let
$J$ be an
arbitrary $n$-field function.

Let $\ww \in \oLS$  and $\cI \cup \cJ =\{1,\ldots,n\}$
be a non-trivial partition, and assume that for all $i \in \cI$,
we have $w_i <w_{i+1}$
(which holds in particular, independently of $\cI$, if
$\ww\in \LS$). 

Then, arbitrarily close to $\ww$,
there exists
$\ww' \in \oLS 
\setminus \{\ww\}$,
essentially different from and
less degenerate than $\ww$ in the sense that
\begin{equation}
\label{eq:essdiff}
w'_\ell\ne w_\ell \quad
\text{ unless } \quad
\{\ell-1,\ell\} \subset \cI  \quad
\text{ or } \quad  \{\ell-1,\ell\}
\subset \cJ
\end{equation}
and\footnote{Note that here $w_{\ell}=w_{\ell+1}$ excludes $\ell \in \cI$, so only $\cJ$ can contain all three indices listed.}
\begin{equation}
\label{eq:lessdeg}
w'_{\ell}\ne w'_{\ell+1} \quad
\text{unless} \quad
\{\ell-1,\ell,\ell+1\} \subset \cJ
\quad \text{and} \quad w_\ell=w_{\ell+1},
\end{equation}
(in particular, if
$\ww \in \LS$ 
then necessarily
$\ww' \in \LS$), 
and such that it satisfies
\begin{align}
\label{eq:genpertlemma-function-I}
&F(\ww',t)\le F(\ww,t)
\text{ for all }  t \in I_i^*(\ww')
\quad  \text{and} \quad
I_i^*(\ww')\subseteq I_i^*(\ww)
 \text{ for all }  i\in\cI;
\\
\label{eq:genpertlemma-function-J}
& F(\ww',t)\ge F(\ww,t)
\text{ for all } t \in I_j^*(\ww)
\quad \text{and} \quad
I_j^*(\ww')\supseteq I_j^*(\ww)
\text{ for all }  j\in\cJ.
\end{align}
As a result, we also have
\begin{equation}
\label{eq:genpertlemma-max}
m_i^*(\ww')\le m_i^*(\ww)
\text{ for }i\in\cI
\quad \text{and} \quad
m_j^*(\ww')\ge m_j^*(\ww) \text{ for }j\in\cJ
\end{equation}
for the corresponding torus maxima.

Moreover, if $K$ is strictly concave,
then the inequalities in
\eqref{eq:genpertlemma-function-I} and
\eqref{eq:genpertlemma-function-J}
are strict for all points in the respective
arcs where $J(t)\ne - \infty$.

Furthermore, for strictly concave $K$
the inequalities in \eqref{eq:genpertlemma-max}
are also strict for all indices $k$
with non-singular $I_k^*(\ww)$.
\end{lemma}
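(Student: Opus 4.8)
The strategy is to reduce the torus statement to the corresponding interval perturbation result by the cutting-up device $\pi_c$, and then to run Fenton's basic widening/shortening argument directly on the arcs. First I would pick a cut point $c\in\TT$ that falls in one of the arcs $I_j^*(\ww)$ with $j\in\cJ$ and avoids all the nodes; such an arc exists because $\cJ\neq\emptyset$, and if every arc in $\cJ$ were a single point then some consecutive block of nodes would be degenerate, in which case one first shifts to an equivalent description. Cutting at such a $c$, pull back to $\xx=\pi_c^{-1}(\ww)\in\oSi^{(\sigma_j)}\subset[0,1]^n$; the arc $I_j^*$ containing $c$ then splits into the two end intervals $I_0(\xx)$ and $I_n(\xx)$, while all other arcs $I_k^*(\ww)$ match intervals $I_k(\xx)$ one-to-one, and $F(\ww,\cdot)$ becomes $F(\xx,\cdot)$ via \eqref{eq.arcmaxintmax} and \eqref{eq:molstarvsmol}.

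Next I would construct $\ww'$ one pair of adjacent nodes at a time, exactly as in the interval argument of Lemma 4.1 of \cite{Sbornik}. For each $i\in\cI$ we want to \emph{narrow} the arc $I_i^*$, and for each $j\in\cJ$ we want to \emph{widen} the arc $I_j^*$; since $\cI\cup\cJ$ is a partition of all $n$ indices, a node $w_\ell$ is the right endpoint of arc $\ell-1$ and the left endpoint of arc $\ell$, so moving it affects exactly the two neighbouring arcs. Choosing a small common parameter and invoking Lemma \ref{lem:widening}\ref{pertlem:parta},\ref{pertlem:partc} (with the balancing relation $\mu=1$ built into the choice of displacements, as in \eqref{eq:mudef}) gives, for each perturbed pair, the pointwise inequality $f(\ww',t)\le f(\ww,t)$ on the shrunk arcs and $f(\ww',t)\ge f(\ww,t)$ on the enlarged arcs; adding back the field function $J(t)$, which is untouched, turns these into \eqref{eq:genpertlemma-function-I} and \eqref{eq:genpertlemma-function-J}. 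The bookkeeping that the motion is consistent — i.e.\ that one can move the nodes so that every $I_i^*$ with $i\in\cI$ genuinely shrinks and every $I_j^*$ with $j\in\cJ$ genuinely grows, while keeping $\ww'\in\oLS$ arbitrarily close to $\ww$ — is where the hypothesis $w_i<w_{i+1}$ for $i\in\cI$ is used: it guarantees there is room to contract the $\cI$-arcs, and it is exactly what makes \eqref{eq:essdiff} and the "less degenerate" conclusion \eqref{eq:lessdeg} hold.

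Then \eqref{eq:genpertlemma-max} follows immediately by taking suprema: since $I_i^*(\ww')\subseteq I_i^*(\ww)$ and $F(\ww',\cdot)\le F(\ww,\cdot)$ there, $m_i^*(\ww')=\sup_{I_i^*(\ww')}F(\ww',\cdot)\le\sup_{I_i^*(\ww)}F(\ww,\cdot)=m_i^*(\ww)$, and symmetrically for $j\in\cJ$. For the strictness claims I would invoke Lemma \ref{lem:widening}\ref{pertlem:partb},\ref{pertlem:partd}: when $K$ is strictly concave the pointwise inequalities are strict wherever the kernel terms are finite, hence strict at every $t$ with $J(t)\ne-\infty$; and for the maxima, if the relevant arc $I_k^*(\ww)$ is non-singular then $X^c$ meets it, so the supremum is attained (or approached) at points where $J>-\infty$, and then the Trivial Lemma \ref{lem:trivi-comprehensive}\ref{trivlemma:partii} — applied with $f=F(\ww',\cdot)$ or $F(\ww,\cdot)$ as the strictly dominated/dominating pair and $h$ the remaining locally bounded part — upgrades the supremum inequality to a strict one.

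\textbf{Main obstacle.} The genuinely torus-specific difficulty is the one flagged in the text: on the circle there is no global ordering, so one cannot simply fix a single interval picture. The delicate point is therefore the combinatorial choice of $c$ and the simultaneous, mutually compatible displacement of \emph{all} $n$ nodes when $\cI$ and $\cJ$ are interleaved around the circle — one must check that shrinking the $\cI$-arcs and expanding the $\cJ$-arcs can be realized by a single small perturbation without any arc being forced to do the opposite of what its index demands, and without collapsing the cyclic order. This is exactly where the asymmetry of the hypotheses ($w_i<w_{i+1}$ only for $i\in\cI$) is essential, and it is the step that requires the most care; the rest is a faithful transcription of the interval perturbation lemma through $\pi_c$.
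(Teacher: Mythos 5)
Your outer skeleton agrees with the paper's preliminary reductions: deduce \eqref{eq:genpertlemma-max} from the pointwise inequalities, reduce those to inequalities for the pure sum of translates $f$, and upgrade to strict inequalities of the arc maxima on non-singular arcs via the Trivial Lemma \ref{lem:trivi-comprehensive}\ref{trivlemma:partii}. But the heart of the lemma --- the actual construction of $\ww'$ for an \emph{arbitrary} non-trivial partition, together with the verification of \eqref{eq:essdiff}, \eqref{eq:lessdeg} and of the pointwise inequalities --- is exactly what you defer to ``bookkeeping'' and yourself flag as ``the step that requires the most care''. That step is the theorem, not a detail, and your proposal does not contain it. Two concrete obstructions. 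First, ``exactly as in the interval argument of Lemma 4.1 of \cite{Sbornik}'' is not available: that proof uses monotonicity of the kernel (needed there because the extreme nodes move with no partner, the endpoints $0,1$ being fixed) and upper semicontinuity of $J$, and both hypotheses are dropped in the present statement --- this is precisely why the paper re-proves the lemma in full rather than transferring it through $\pi_c$. Your choice of a cut point $c$ inside a nondegenerate $\cJ$-arc may also be impossible, since only the $\cI$-arcs are assumed nondegenerate and all $\cJ$-arcs may well be single points; ``shifting to an equivalent description'' is not a construction.

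Second, even with a cut fixed, a single decomposition of the displaced nodes into balanced ($\mu=1$) pairs cannot produce both directions of the pointwise inequality: for a fixed pair, Lemma \ref{lem:widening} gives an inequality only outside (part \ref{pertlem:parta}) or only inside (part \ref{pertlem:partc}) the arc enclosed by that pair, so one must regroup the nodes into \emph{different} pairings according to which arc the test point $t$ lies in. This is the two-pairing device of the paper's Case 0, where the partition is alternating and every node is moved by $\mp h/\nu_\ell$. For a general interleaved partition, nodes interior to a block of consecutive $\cI$-indices or $\cJ$-indices must stay put (this is what \eqref{eq:essdiff} allows), the degenerate configurations $w_j=w_{j+1}$ with $j\in\cJ$ must be treated, and \eqref{eq:lessdeg} must be checked; the paper handles all of this by induction on $n$, absorbing such a fixed node $w_k$ into the field as $\nu_k K(\cdot-w_k)$ and working with the reduced node system (Cases 1 and 2). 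Your proposal offers neither this mechanism nor a substitute for it, so the central claim remains unproved as written.
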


\begin{proof}
Before the main argument,
we observe that the assertion
in \eqref{eq:genpertlemma-max} is
indeed a trivial consequence of the previous inequalities
\eqref{eq:genpertlemma-function-J}
and \eqref{eq:genpertlemma-function-I},
so we need not give a separate proof for that.

Second,
the inequalities
\eqref{eq:genpertlemma-function-I} and
\eqref{eq:genpertlemma-function-J} follow from
\begin{align}
\label{eq:genpertlemma-fI}
&f(\ww',t)\le f(\ww,t)
\ (\forall t \in I^*_i(\ww'))
\quad \text{and} \quad
I^*_i(\ww')\subseteq I^*_i(\ww)
\quad \text{for all } i\in\cI;
\\
\label{eq:genpertlemma-fJ}
& f(\ww',t)\ge f(\ww,t) \
(\forall t \in I^*_j(\ww))
\quad \text{and} \quad
I^*_j(\ww')\supseteq I^*_j(\ww)
\quad \text{for all}\  j\in\cJ.
\end{align}
Moreover, strict inequalities
for all points $t$ with
$J(t)\ne -\infty$ will follow
from \eqref{eq:genpertlemma-function-I}
and \eqref{eq:genpertlemma-function-J}
if we can prove strict inequalities in
\eqref{eq:genpertlemma-fI} and \eqref{eq:genpertlemma-fJ}
\emph{for all values of $t$} in the said compact arcs.

Furthermore, in case we have strict inequalities in \eqref{eq:genpertlemma-fI} and \eqref{eq:genpertlemma-fJ} for all points $t$,
then for non-singular $I_k^*(\ww)$
this entails strict inequalities
also in \eqref{eq:genpertlemma-max}
(for the corresponding $k$).
To see this, one may refer back to the
Trivial Lemma \ref{lem:trivi-comprehensive}
\ref{trivlemma:partii}
with $\{f,g\}=\{f(\ww,\cdot),f(\ww',\cdot)\}$,
$h=J$, $\{A,B\}=\{I^*_k(\ww),I^*_k(\ww')\}$.
Here in the case when $k=i \in \cI$
we need to use that the arc $I^*_i(\ww)$ is not degenerate for $i \in \cI$,
hence if it is nonsingular, too,
then either $J|_{I^*_i(\ww')} \equiv -\infty$ and then
we have the strict inequality $m^*_i(\ww)>-\infty=m_i^*(\ww')$,
or $J(t)>-\infty$ at some points of $I^*_i(\ww')$,
and then using this Lemma furnishes the required strict inequality.
(The other case with $k=j \in \cJ$ is easier
because nonsingularity of $I_j^*(\ww)$ implies nonsingularity of the
larger arc $I_j^*(\ww')$, too, and then
the application of the Lemma need not be coupled by considerations of an identically $-\infty$ field.)

So the proof hinges upon showing \eqref{eq:essdiff}, \eqref{eq:lessdeg}, \eqref{eq:genpertlemma-fI} and
\eqref{eq:genpertlemma-fJ},
for any $n$-field function and any kernel function,
coupled with the strict inequality assertion in \eqref{eq:genpertlemma-fI} and
\eqref{eq:genpertlemma-fJ} for all $t$ belonging to the said compact arcs, in case $K$ is strictly concave.

For $n=0$ or $n=1$ there is no nontrivial partition of the index set $\{1,\ldots,n\}$,
hence the assertion is void and true.

For $n=2$ there is essentially only one way to split the index set in a nontrivial way,
so the statement will be part of the following,
more general setup in Case 0, which we prove directly.
Actually, the $n=2$ case can be proved directly from Lemma \ref{lem:widening},
but we will need the more general
Case 0 anyway.

\textbf{Case 0}.
We prove directly the assertion when $\cI, \cJ$ contain no neighboring indices,
so $n$ must be even and $\cI$ and $\cJ$ partition
$\{1,\ldots,n\}$ into the subsets of odd and even natural numbers from $1$ to $n$.

We can assume that $\cI=(2\NN+1) \cap \{1,\ldots,n\}$
and $\cJ=2\NN \cap \{1,\ldots,n\}$
(the other case being a simple change of the cut,
i.e., starting of the listing of the cyclic ordering of nodes from one node later) .

Note that whenever $w_j\in \cJ$ (i.e., when $j$ is even),
then we necessarily have $j-1, j+1\in \cI$,
and $w_{j-1}<w_j\le w_{j+1}<w_{j+2}$.

Denote $\de:=\min_{i \in \cI}|I^*_i(\ww)|$,
(where $|I^*_i(\ww)|$ is the length of the arc $[w_i,w_{i+1}]$)
which is positive by condition.

Our new perturbed node system $\ww'$
will be, with an arbitrary
$0<h<\frac{1}{2} \delta/\max\{\nu_1,\ldots, \nu_n\}$,
the system
\begin{equation}
\label{eq:wprimeoddeven}
\ww':=(w_1',\ldots,w_n') \quad
\text{with} \quad
w'_\ell:=w_\ell-(-1)^\ell \frac{1}{\nu_\ell} h,
\quad \ell=1,2,\ldots,n.
\end{equation}
The definition guarantees, that $w'_\ell=w_\ell$ happens for
no index $\ell$, furnishing \eqref{eq:essdiff}.

It is easy to see that by the choice of the perturbation lengths,
no two consecutive nodes will change ordering or reach each other:
for $j\in \cJ$ $w_j, w_{j+1}$ are changed to become farther away,
while for $i\in \cI$ $w_i, w_{i+1}$ are moved closer,
but only by $\nu_i h + \nu_{i+1} h < \de<w_{i+1}-w_i$.
It follows that the ordering of nodes remains in
$\oLS$.
(In a minute we will see that,
moreover, there remains no degenerate arc,
so
$\ww' \in \LS\setminus\{\ww\}$.)

Obviously, $I^*_j(\ww') \supset I^*_j(\ww)$ holds
for all $j\in \cJ$, and
$I^*_i(\ww') \subset I^*_i(\ww)$ holds
for all $i\in \cI$,
with the inclusions strict, furnishing the second parts of
\eqref{eq:genpertlemma-function-I} and \eqref{eq:genpertlemma-function-J}
(matching the second parts of \eqref{eq:genpertlemma-fI}
and \eqref{eq:genpertlemma-fJ}, too).
In particular, even if $I^*_j(\ww)$ may
be
degenerate
for some $j\in \cJ$, i.e.,
for some even $j$, the $j$th arc of the perturbed system will not be such:
$w'_j<w_j\le w_{j+1}<w'_{j+1}$ for any even $j \in \cJ$.
We get $\ww' \in \LS$,
entailing \eqref{eq:lessdeg}.

Take now an even indexed arc
$I^*_{2k}(\ww)=[w_{2k},w_{2k+1}]$,
so $2k \in \cJ$.
(When $2k=n$, then we must read $2k+1=n+1\equiv 1$, i.e. $w_{2k+1}=w_1$.)
Our perturbation of nodes in \eqref{eq:wprimeoddeven} can now be
grouped as \emph{pairs of changing nodes} $w_{2\ell-1},w_{2\ell}$
among $w_1,\ldots w_{2k}$, and
then again among $w_{2k+1},\ldots, w_n$,
recalling that $n$ is even.
Now, \emph{the pairs} are always changed so that the arcs in between shrink,
and shrink exactly as is described in Lemma \ref{lem:widening}.
We apply this lemma for each pair of such nodes with the choices
$a=w'_{2\ell-1}$, $b=w'_{2\ell}$,
$\alpha=w_{2\ell-1}$, $\beta=w_{2\ell}$,
$p=\nu_{2\ell-1}$, $q=\nu_{2\ell}$.
This gives that for each such pair of changes,
for $t$ \emph{outside of the enclosed arc}
$(w_{2\ell-1},w_{2\ell})=I^*_{2\ell-1}(\ww)$ we have
\begin{equation}
\label{eq:genpert1}
\nu_{2\ell-1}K(t-w'_{2\ell-1})
+\nu_{2\ell}K(t-w'_{2\ell})
\geq
\nu_{2\ell-1}K(t-w_{2\ell-1})
+\nu_{2\ell}K(t-w_{2\ell}).
\end{equation}
Note that $I^*_{2k}(\ww)$,
hence any $t \in I^*_{2k}(\ww)$, is \emph{always outside} of the arcs $I^*_{2\ell-1}(\ww)$,
therefore \eqref{eq:genpert1} holds for the given,
fixed $t\in I^*_{2k}(\ww)$ and for all $\ell$.
So we find
\begin{align}
\notag
f(\ww,t)& = \sum_{\ell=1}^{n/2}
\left(\nu_{2\ell-1}K(t-w_{2\ell-1})
+\nu_{2\ell}K(t-w_{2\ell})\right)
\\&
\le \sum_{\ell=1}^{n/2}
\left(\nu_{2\ell-1}K(t-w_{2\ell-1}')
+\nu_{2\ell}K(t-w_{2\ell}')\right)
=f(\ww',t).
\end{align}
Furthermore, all the appearing inequalities are strict in case $K$ is strictly concave.
We have proved \eqref{eq:genpertlemma-fJ},
even with strict inequality under appropriate assumptions.

The proof of \eqref{eq:genpertlemma-fI} runs analogously
by grouping the change of nodes as a
change of pairs $w_{2\ell}, w_{2\ell+1}$
for $\ell=1,\dots,n/2$, writing $w_{2(n/2)+1}=w_{n+1}=w_1$ according to periodicity.
For these arcs $2\ell \in \cJ$ and the
arcs are getting larger after the perturbation,
so outside these enlarged arcs--that is, for all points
which belong to any $I^*_{2k-1}(\ww')$ for some fixed $k$--the changed value $f(\ww',t)$ will not exceed
(and in case of strict concavity, will be strictly smaller than) $f(\ww,t)$.
This means a nonincreasing (decreasing) change for all $I^*_i(\ww')$,
entailing \eqref{eq:genpertlemma-fI} together
with the  respective strict inequality statement.

The proof of Case 0 is thus completed.

\medskip

Therefore, we have also the case $n=2$ proved.
From here we continue our argumentation by induction.
Let now $n>2$ and assume, as inductive hypothesis,
the validity of the assertions
for all $n^* \le \widetilde{n}:=n-1$ and for any choice of kernel- and $n^*$-field functions.

In view of Case 0 above,
there remains the case when there are neighboring indices $k-1, k$ belonging to the same index set $\cI$ or $\cJ$.
In view of the cyclic ordering and to avoid indexing complications,
assume that we also have $1 <k <n$, which is a possibility for any $n$ at least $3$.
We separate two cases.

\medskip
\textbf{Case 1}. Assume first that
$w_{k-1}<w_k<w_{k+1}$ holds.

Then we consider the kernel function $\widetilde{K}:=K$,
and the $\widetilde{n}$-field function $\widetilde{J}:=\nu_k K(\cdot-w_k)$
(which is indeed an $\widetilde{n}$-field function because it attains $-\infty$ only at most at one point,
namely $w_k$, in case $K$ is singular.)

Correspondingly, now the sum of translates function $\widetilde F$ is formed
by using $\widetilde{n}=n-1$ translates with coefficients
$\nu_1,\ldots,\nu_{k-1},\nu_{k+1},\ldots,\nu_n$ and
with respect to the node system
\[
\widetilde{\ww}:=
(w_1,w_2,\ldots,w_{k-1},w_{k+1},\ldots,w_n).
\]
Formally, the indices change:
$\widetilde{w}_\ell=w_\ell$ for $\ell=1,\ldots,k-1$,
but $\widetilde{w}_\ell=w_{\ell+1}$ for $\ell=k,\ldots,n-1$, the $k$th coordinate being left out.

We apply the same change of indices in the partition:
$k$ is dropped out (but the corresponding index set
$\cI$ or $\cJ$ will not become empty, for it contains $k-1$);
and then shift indices one left for $\ell>k$: so
\begin{align*}
\widetilde{\cI}&:=
\{i \in \cI: i<k\} \cup
\{ i-1 : i \in \cI, i> k \}
\text{  and} \\
\widetilde{\cJ}&:=\{j \in \cJ: j<k\}
\cup \{j-1 : j\in \cJ, j>k\}.
\end{align*}
Observe that $\widetilde{F}(\widetilde{\ww},t)=f(\ww,t)$
for all $t\in \TT$, while
\[
I^*_\ell(\widetilde{\ww})=
\begin{cases}
I^*_\ell(\ww), \quad &
  \text{if} \quad \ell <k-1, \\
I^*_{k-1}(\ww)\cup I^*_{k}(\ww), \quad &
  \text{if} \quad \ell =k-1, \\
I^*_{\ell+1}(\ww), \quad &
  \text{if} \quad \ell \ge k.
\end{cases}
\]
(Here we make a little use of the choice that $1<k<n$,
so we need not bother too much with the cyclic renumbering etc.)

Note that $I^*_i(\widetilde{\ww})=[\widetilde{w}_i,\widetilde{w}_{i+1}]$ is still nondegenerate
whenever $i \in \widetilde{\cI}$,
for the arc is either a former arc belonging to some $i\in \cI$,
or the union of two such arcs.
Also, ordering of nodes is kept intact,
so  $\widetilde{\ww} \in \oLStilden$
(where $\oLSn$ and
$\oLStilden$
denote the cyclic simplices of the
corresponding dimension).

Now we apply the inductive hypothesis for the new configuration.
This yields a perturbed node system $\widetilde{\ww}' \in \oLStilden \setminus \{\widetilde{\ww}\}$,
arbitrarily close to $\widetilde{\ww}$,
with the asserted properties.
It is important that here the
ordering of the nodes remain the order fixed
in $\oLStilden$,
so if $\widetilde{\ww}'$ was closer to $\widetilde{\ww}$
than the distance $\delta$
of $w_k$ from $\{w_{k-1},w_{k+1}\}$,
then the $n$-term node system $\ww'$, obtained by
keeping the nodes from $\widetilde{\ww}'$
and inserting back $w_k$ to the $k$th place
(and shifting the following indices by one)
will again be ordered as $\ww$ was,
i.e., $\ww' \in \oLSn$
(and of course $\ne \ww'$,
as already $\widetilde{\ww}'\ne \widetilde{\ww}$).
Moreover, $w'_k=w_k$ is still not equal
to any of the nodes $w'_{k-1}, w'_{k+1}$,
because $\mathrm{dist}_{\bT^n}(\ww',\ww)<\delta$.

We now set to prove \eqref{eq:essdiff}.
First, if for some $\ell<k$
we have $w_\ell'=w_\ell$,
then $\widetilde{w}'_\ell=:w_\ell'=w_\ell =:\widetilde{w}_\ell$,
and then by the inductive hypothesis $\{\ell-1,\ell\}\subset \widetilde{\cI}$
or $\widetilde{\cJ}$.
This gives \eqref{eq:essdiff} in case $\ell<k$,
because below $k$ the partition sets $\cI$ and $\widetilde{\cI}$ (and $\cJ$ and $\widetilde{\cJ}$, respectively), consist of the same indices.

Take now some $\ell>k$ with $w_\ell'=w_\ell$.
Then $\widetilde{w}'_{\ell-1}=:w_\ell'=w_\ell=:\widetilde{w}_{\ell-1}$
and
$\{\ell-2,\ell-1\}\subset \widetilde{\cI}$
or $\widetilde{\cJ}$
by the inductive hypothesis. If it was $\ell>k+1$, too, then
this means $\{\ell-1,\ell\}\subset \cI$
or $\cJ$, that is, \eqref{eq:essdiff}.

If, however, $\ell=k+1$, then $\ell-2=k-1$ and $\ell-1=k$,
and by construction we get that $\{k-1, k+1\} \subset \cI$ or $\cJ$.
Given that we already have from the outset that $k$ belongs to the same index set as $k-1$, this altogether gives $\{k-1, k, k+1\} \subset \cI$ or $\cJ$,
which is more than needed for \eqref{eq:essdiff}.

Finally, if $\ell=k$,
then $w'_k=w_k$ by construction,
but then we had by assumption that $k-1$
and $k$ belonged to the same set $\cI$
or $\cJ$, so that \eqref{eq:essdiff} is satisfied.

Consider now the assertions of \eqref{eq:lessdeg}.
As above, there is no problem with respective index sets all
remaining the same or all being shifted by one,
i.e., if either $\ell<k-1$ or if $\ell>k+1$.

Recall that we assumed $w_{k-1}<w_k<w_{k+1}$ at the outset, and chose the perturbation small enough to keep this strict ordering.
Therefore, $w'_{k-1}=w'_k$ and $w'_k=w'_{k+1}$ are excluded, and
only the case of $\ell=k+1$ and $w'_{k+1}=w'_{k+2}$ remains to be dealt with.
Now, $\widetilde{w}'_k=:w'_{k+1}=w'_{k+2}:=\widetilde{w}'_{k+1}$,
so  $\{k-1,k,k+1\} \subset \widetilde{\cJ}$ and $\widetilde{w}_k:=\widetilde{w}_{k+1}$
according to the inductive hypothesis.
The latter means $w_{k+1}=w_{k+2}$, while for the indices
we obtain $\{k-1,k+1,k+2\}\subset \cJ $.
But $k-1$ and $k$ belong to the same index set, so that also $k$ must belong to $\cJ$,
and therefore $\{k-1,k,k+1,k+2\}\subset \cJ$, entailing \eqref{eq:lessdeg}.

\medskip
Using that $k-1$ and $k$ belong to the same index set $\cI$ or $\cJ$,
it is easy to check that $I^*_i(\widetilde{\ww}') \subseteq I^*_i(\widetilde{\ww})$
for all $i\in \widetilde{\cI}$ is equivalent to $I^*_i(\ww')\subseteq I^*_i(\ww)$
for all $i\in {\cI}$, and $I^*_j(\widetilde{\ww}') \supseteq I^*_j(\widetilde{\ww})$
for all $j\in \widetilde{\cJ}$ is equivalent to $I^*_j(\ww')\supseteq I^*_j(\ww)$ for all $j\in {\cJ}$.
Further, the assertions \eqref{eq:genpertlemma-function-I}, \eqref{eq:genpertlemma-function-J}
from the inductive hypotheses lead to the assertions \eqref{eq:genpertlemma-fI}, \eqref{eq:genpertlemma-fJ}
for the original case.
Therefore,
by the preliminary observations also \eqref{eq:genpertlemma-function-I}, \eqref{eq:genpertlemma-function-J} follow.
Moreover, the assertion regarding strict inequalities for all $t$
in case of a strictly concave $K$ follow from the respective strict inequalities
for the inductive hypotheses, noting that
$I_{k-1}^*(\widetilde{\ww})=I_{k-1}^*(\ww)\cup I^*_{k}(\ww)$
can handle the necessary inequalities for both indices $k-1$ and $k$,
because these belong to the same index set $\cI$ or $\cJ$, and
hence invoke inequalities in the same direction.

\medskip
\textbf{Case 2}. Consider now the case when some of the partition sets $\cI, \cJ$
contain some neighboring indices $k-1, k$, such that $w_{k-1}=w_k$
or $w_k=w_{k+1}$ holds, too.
Then this index set cannot be $\cI$,
for indices in $\cI$ the respective arcs were supposed to be nondegenerate.
So, $k-1, k \in \cJ$.

Repeating the above argument in Case 1 then works.
Let us detail, why.

The main point where we needed that $w_{k-1}<w_k<w_{k+1}$ was where
we wanted to see that the new node system $\widetilde{\ww}'$,
provided by the induction hypothesis,
not only preserves cyclic ordering of nodes from $\widetilde{\ww}$,
but even with re-inserting $w_k$ and thus manufacturing $\ww'$ will still result
in a point belonging to $\oLSn$.

Observe that the inductive hypothesis,
in view of $k-1, k \in \cJ$,
furnishes that $I^*_{k-1}(\widetilde{\ww})$ is subject to growth,
so it will still contain the point $w_k$, that is,
$w'_{k-1}:=\widetilde{w}'_{k-1}\le \widetilde{w}_{k-1}:=w_{k-1} \le w_k \le w_{k+1}=:\widetilde{w}_k\le \widetilde{w}'_{k} =:w'_{k+1}$.
It follows immediately that we will thus have $\ww' \in \oLS$ at least.

Moreover, as above, equality of other perturbed and original nodes $w'_\ell$
and $w_\ell$ ($\ell \ne k$) can occur only when they occurred also in $\widetilde{\ww}$, hence in $\ww$, too.
Checking the arising conditions for the indices can be done mutatis mutandis the above case,
proving \eqref{eq:essdiff}, on noting that for $\ell=k$ we already have $k-1,k\in \cJ$ by assumption.

To prove \eqref{eq:lessdeg}, assume now the identity $w'_\ell=w'_{\ell+1}$.
Again, we separate cases according to the size of $\ell$ and start with the case $\ell<k-1$.
This implies
$\widetilde{w}'_\ell=:w'_\ell=w'_{\ell+1}:=\widetilde{w}'_{\ell+1}$,
hence also $\widetilde{w}_\ell=\widetilde{w}_{\ell+1}$ and $\ell-1,\ell,\ell+1 \in \widetilde{\cJ}$
in view of the inductive hypothesis, so  $w_\ell=w_{\ell+1}$ and $\ell-1,\ell,\ell+1 \in \cJ$, too.
Similarly, if $\ell>k$ then the identity $w'_\ell=w'_{\ell+1}$ implies
$\widetilde{w}'_{\ell-1}=:w'_\ell=w'_{\ell+1}:=\widetilde{w}'_{\ell}$,
hence also $\widetilde{w}_{\ell-1}=\widetilde{w}_\ell$ and $\ell-2,\ell-1,\ell \in \widetilde{\cJ}$
in view of the inductive hypothesis, so  $w_\ell=w_{\ell+1}$ and
if $\ell>k+1$, then $\ell-1,\ell,\ell+1 \in \cJ$, too,
while if $\ell=k+1$, then we get only $k-1, k+1,k+2 \in \cJ$,
but then again we remind to $k\in \cJ$ and get $\{k-1,k,k+1,k+2\}\subset \cJ$, proving \eqref{eq:lessdeg}.

It remains to deal with $\ell=k-1$ and $\ell=k$, which did not occur in Case 1 above.
Now, $\ell=k-1$ means that we have the identity $w'_{k-1}=w'_k$.
Since $k-1,k\in \cJ$ by assumption, we also have $k-1 \in \widetilde{\cJ}$,
hence $I^*_{k-1}(\widetilde{\ww})$ cannot shrink, and therefore $\widetilde{w}'_{k-1} \le \widetilde{w}_{k-1}$.
Moreover, if we had strict inequality here,
then we would have to have $w'_{k-1}:=\widetilde{w}'_{k-1} < \widetilde{w}_{k-1}:=w_{k-1}\le w_k=:w'_k$,
although we supposed the contrary now.
So, we must have $\widetilde{w}'_{k-1} = \widetilde{w}_{k-1}$,
the inductive hypothesis applies, and
we derive that both $k-2$ and $k-1$ belong to the same index set -- that is, because of $k-1$, to $\cJ$.
However, we already know by assumption also
$k\in \cJ$, so altogether $\{k-2,k-1,k\}\subset \cJ$, as needed.

The case $\ell=k$ is similar.
If $w'_k=w'_{k+1}$, then taking into account
that $I^*_{k-1}(\widetilde{\ww})$ cannot shrink (as $k-1 \in \widetilde{\cJ}$)
we must have $w'_{k+1}:=\widetilde{w}'_k \ge \widetilde{w}_k:=w_{k+1}\ge w_k=w'_k$ entailing that all inequalities are in fact equalities, and in particular both $w_{k+1}=w_k$ and $\widetilde{w}'_k = \widetilde{w}_k$.
Referring to the inductive hypothesis this furnishes $k-1,k \in \widetilde{\cJ}$,
that is, $k-1,k+1 \in \cJ$,
whilst $k\in \cJ$ by the original condition, altogether yielding $\{k-1,k,k+1\}\subset \cJ$, as needed.

So, we proved \eqref{eq:essdiff} and \eqref{eq:lessdeg} for this case, too.
The proof for the remaining inequalities and strictness of them in case of a strictly concave kernel is identical to the argument in Case 1.

We thus conclude the proof of Case 2,
whence the whole Lemma.

\end{proof}

\begin{rem}
Although the formulation of the Lemma is a bit complicated,
one may note that assuming $w_i <w_{i+1}$
for all $i \in \cI$ is absolutely natural and minimal.
Natural, because if we want to decrease the arcs $I_i^*(\ww)$ for all $i\in\cI$,
then these arcs must shrink,
hence we cannot perform this change
when they are already degenerate one point intervals.
Minimal, because we do not assume similar conditions
for any $I_j^*(\ww)$ with $j\in \cJ$,
so even the less that $\ww \in \LS$.
\end{rem}

\begin{rem} If we had $\ww \in \LS$ from the outset, then we will as well have $\ww' \in \LS$.
\end{rem}

\begin{cor}
If we only want respective inequalities for the
$m_j^*$, without requiring strict inclusions regarding the
underlying intervals, then we can apply a further
perturbation, now leading to $\ww'' \in \LS$
(the point being that with different endpoint nodes !) and
still satisfying the required strict inequalities between
the $m_j^*$,
provided that we had strict inequalities
(so, e.g., $K$ was strictly concave) and provided the $m_j^*$
change continuously.
\end{cor}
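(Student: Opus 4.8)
The plan is to start from the node system $\ww'$ produced by Lemma~\ref{lem:gen-max-perturbation} and to move it, by a second and arbitrarily small perturbation, into the open large simplex $\LS$ while keeping intact the strict inequalities $m_i^*(\ww')<m_i^*(\ww)$ ($i\in\cI$) and $m_j^*(\ww')>m_j^*(\ww)$ ($j\in\cJ$) which we are assuming to hold. Recall that the only obstruction to $\ww'\in\LS$ is a degenerate arc, and that by \eqref{eq:lessdeg} this can occur only at indices $\ell$ with $\{\ell-1,\ell,\ell+1\}\subset\cJ$ and $w_\ell=w_{\ell+1}$; in particular every $\cI$-arc of $\ww'$ is already nondegenerate.

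First I would use that $\LS$ is dense in $\oLS$ --- indeed $\oLS$ is by definition the closure of $\LS$. Concretely one resolves each maximal block of coinciding coordinates of $\ww'$ by displacing its members by successively larger infinitesimal amounts in the positive orientation, the displacements chosen small enough not to reach the next distinct node; this yields points of $\LS$ arbitrarily close to $\ww'$, hence (as $\ww'$ was already chosen arbitrarily close to $\ww$) also arbitrarily close to $\ww$.

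Next I would invoke the continuity hypothesis: each $m_k^*$ is continuous on $\oLS$ (for instance under the assumptions of Proposition~\ref{prop:mjstarcont}\,\ref{mjstarcont:parta} or \ref{mjstarcont:partb}). Since only finitely many strict inequalities $m_i^*(\ww')<m_i^*(\ww)$ and $m_j^*(\ww')>m_j^*(\ww)$ are involved, there is a neighbourhood $U$ of $\ww'$ in $\oLS$ on which all of them persist; picking $\ww''\in U\cap\LS$ by the density step gives the assertion. The one point needing a word is a $\cJ$-index $j$ for which $I_j^*(\ww)$ was a degenerate, singular arc (so $m_j^*(\ww)=-\infty$): there the strict inequality $m_j^*(\ww'')>-\infty$ is automatic once $\ww''\in\LS$ turns $I_j^*$ into a genuine nondegenerate arc on which $J$ is not identically $-\infty$ --- this is precisely the gain hidden in insisting on \emph{different endpoint nodes}.

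I do not expect a deep obstacle here; the only care needed is the bookkeeping of which strict inequalities are actually in hand \emph{before} the second perturbation. This is exactly why the corollary is stated conditionally (``provided that we had strict inequalities''): at an index $k\in\cI$ whose arc $I_k^*(\ww)$ is singular one has $m_k^*(\ww)=-\infty$ and nothing can be pushed below it, so such degenerate-but-relevant situations must be excluded --- which they are, e.g., whenever $K$ is strictly concave and all the pertinent arcs are nonsingular. Granting that, the corollary follows by the routine combination of density of $\LS$ in $\oLS$ with the continuity of the $m_k^*$ described above, the resulting $\ww''$ being still arbitrarily close to $\ww$.
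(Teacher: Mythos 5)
Your argument is correct and is exactly the argument the paper has in mind (the corollary is stated without a detailed proof precisely because it reduces to this): perturb $\ww'$ slightly into $\LS$, which is dense in $\oLS$, and use the assumed continuity of the finitely many $m_k^*$ near $\ww'$ to preserve the strict inequalities, with the singular/degenerate indices handled just as you note. Your bookkeeping caveat about which strict inequalities are actually available matches the paper's own proviso, so nothing is missing.
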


Note that this latter condition of continuity of the $m^*_j$ is satisfied
if $K$ is singular or
if $J$ is continuous, see Proposition \ref{prop:mjstarcont} \ref{mjstarcont:parta}
and \ref{mjstarcont:partc}.

\section{Minimax and maximin theorems}\label{sec:mnimaxandmaximin}

\subsection{Minimax for strictly concave kernels}

The following theorem contains, as a rather special case with the choice of the log-sine kernel $K(t):=\log|\sin (\pi t)|$, the above stated Theorem \ref{thm:Bojanov}.

\begin{thm}
Let $n \in \NN$ and $\nu_1,\ldots,\nu_n>0$,
let $K$ be a singular, strictly concave periodic kernel function, and
let $J$ be an arbitrary periodic $n$-field function.

Then there exists a minimax point $\ww$ on $\overline{\LS}$,
it belongs to the open
\enquote{cyclic simplex} $\LS$,
and it is an equioscillation point.
\label{thm:minimax}
\end{thm}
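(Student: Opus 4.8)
The plan is to prove Theorem \ref{thm:minimax} by combining a compactness argument for the existence of a minimax point with the General Maximum Perturbation Lemma \ref{lem:gen-max-perturbation} to force the minimizer into the open simplex and to establish equioscillation.

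\textbf{Step 1: Existence of a minimax point.} First I would note that $\oLS\subset\TT^n$ is compact and that, by Proposition \ref{prop:molstarcontinuous}, the function $\mol^*:\TT^n\to\RR$ is continuous (here we use that $K$ is a kernel function and $J$ a field function; singularity is not even needed for this part). Hence $\mol^*$ attains its infimum over $\oLS$ at some point $\ww\in\oLS$, and $M^*(\oLS)=\mol^*(\ww)=\max_{j=1,\ldots,n} m_j^*(\ww)$. Since $J>-\infty$ at more than $n$ points of $\TT$, for any $\yy\in\oLS$ at least one arc $I_k^*(\yy)$ contains two points where $J$ is finite, so $\mol^*(\yy)>-\infty$; thus $M^*(\oLS)$ is a finite real number.

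\textbf{Step 2: The minimax point lies in $\LS$.} Suppose for contradiction that the minimizer $\ww\in\oLS\setminus\LS$, i.e.\ some arcs are degenerate. I would apply Lemma \ref{lem:gen-max-perturbation} with the partition $\cI:=\{i: w_i<w_{i+1}\}$ (the non-degenerate arcs) and $\cJ$ its complement (the degenerate ones, which is non-empty by assumption); note $\cJ\neq\emptyset$ makes the partition non-trivial, and the hypothesis $w_i<w_{i+1}$ for $i\in\cI$ holds by construction. The lemma produces $\ww'$ arbitrarily close to $\ww$, with $\ww'\in\LS$ (by \eqref{eq:lessdeg}, since no arc stays degenerate), such that $m_i^*(\ww')\le m_i^*(\ww)$ for $i\in\cI$ and $m_j^*(\ww')\ge m_j^*(\ww)$ for $j\in\cJ$. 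The point is that the degenerate arcs $j\in\cJ$ are singular (an arc of a single point on which $F=J+f$ takes the value $J(w_j)+nK(0)=-\infty$ because $K$ is singular), so $m_j^*(\ww)=-\infty$ for $j\in\cJ$; hence the maximum $\mol^*(\ww)=\max_j m_j^*(\ww)$ is attained only on indices in $\cI$. Using strict concavity of $K$ and the final assertion of Lemma \ref{lem:gen-max-perturbation}, the inequalities $m_i^*(\ww')< m_i^*(\ww)$ are strict for every non-singular $I_i^*(\ww)$ with $i\in\cI$; and since for singular $I_i^*(\ww)$ with $i\in\cI$ we have $m_i^*(\ww')\le m_i^*(\ww)$ anyway, we get $\mol^*(\ww')=\max_{i}m_i^*(\ww')<\max_{i}m_i^*(\ww)=\mol^*(\ww)=M^*(\oLS)$, contradicting minimality. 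Therefore $\ww\in\LS$.

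\textbf{Step 3: Equioscillation.} Now $\ww\in\LS$, so all arcs are non-degenerate, and (since $K$ is singular) every value $m_k^*(\ww)>-\infty$; in fact by Proposition \ref{prop:mjstarcont}\ref{mjstarcont:parta} each $m_k^*$ is extended continuous on $\oLS$, hence finite and continuous near $\ww$. Suppose equioscillation fails, i.e.\ the set $\cJ:=\{j: m_j^*(\ww)<\mol^*(\ww)\}$ is non-empty, and let $\cI$ be its complement (the indices achieving the maximum), also non-empty. Apply Lemma \ref{lem:gen-max-perturbation} with this partition: since $\ww\in\LS$ the hypothesis $w_i<w_{i+1}$ holds for all $i$, the lemma yields $\ww'\in\LS$ arbitrarily close to $\ww$ with $m_i^*(\ww')< m_i^*(\ww)$ strictly for $i\in\cI$ (all arcs non-singular, $K$ strictly concave) and $m_j^*(\ww')\ge m_j^*(\ww)$ for $j\in\cJ$. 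Choosing $\ww'$ close enough that continuity of the $m_j^*$, $j\in\cJ$, keeps $m_j^*(\ww')<\mol^*(\ww)$, we obtain $\mol^*(\ww')=\max_j m_j^*(\ww')<\mol^*(\ww)=M^*(\oLS)$, again a contradiction. Hence $m_1^*(\ww)=\cdots=m_n^*(\ww)=M^*(\oLS)$, i.e.\ $\ww$ is an equioscillation point.

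\textbf{Main obstacle.} The delicate point is the interplay in Steps 2 and 3 between the \emph{strict} decrease of the maxima on $\cI$ and the mere \emph{non-decrease} on $\cJ$: we must be sure that after perturbation the overall maximum $\mol^*$ genuinely drops, which requires that (a) in Step 2 the maximum is never attained on a degenerate ($\cJ$) arc — this is exactly where singularity of $K$ is essential — and (b) in Step 3 the small perturbation does not let some $m_j^*$ with $j\in\cJ$ jump up past the old maximum, which is why we need continuity of the arc maxima (guaranteed by singularity of $K$ via Proposition \ref{prop:mjstarcont}\ref{mjstarcont:parta}). Getting these two points cleanly aligned with the precise strictness clause of Lemma \ref{lem:gen-max-perturbation} (strict for non-singular arcs only) is the crux; everything else is routine compactness and bookkeeping.
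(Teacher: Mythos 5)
Your overall strategy (existence by compactness and Proposition \ref{prop:molstarcontinuous}, then the Perturbation Lemma \ref{lem:gen-max-perturbation} combined with continuity of the arc maxima to rule out non-equioscillation) is the same as the paper's, but your Step 2 contains a genuine gap as written. There you take $\cI$ to be the nondegenerate arcs and $\cJ$ the degenerate ones, and conclude $\mol^*(\ww')=\max_{i\in\cI}m_i^*(\ww')$. This identity is not justified: after the perturbation the formerly degenerate arcs $I_j^*(\ww')$, $j\in\cJ$, are genuinely nondegenerate, and Lemma \ref{lem:gen-max-perturbation} only gives the useless lower bound $m_j^*(\ww')\ge m_j^*(\ww)=-\infty$ for them; nothing in the lemma prevents, a priori, some $m_j^*(\ww')$ with $j\in\cJ$ from exceeding $\max_{i\in\cI}m_i^*(\ww')$ or even $\mol^*(\ww)$ (the pointwise inequality $F(\ww',t)\le F(\ww,t)$ is only guaranteed on the shrunken arcs $I_i^*(\ww')$, not on the pieces of the old $\cI$-arcs that get absorbed into the enlarged $\cJ$-arcs, and on those pieces $F$ actually increases). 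The cure is exactly the device you invoke in Step 3: by Proposition \ref{prop:mjstarcont} \ref{mjstarcont:parta} (singularity of $K$) each $m_j^*$ is extended continuous, so $m_j^*(\yy)\to m_j^*(\ww)=-\infty$ as $\yy\to\ww$, and choosing $\ww'$ sufficiently close keeps all $m_j^*(\ww')$, $j\in\cJ$, below $\mol^*(\ww)$. Your "main obstacle" paragraph shows you are aware of this phenomenon, but you attribute it only to Step 3, while Step 2 needs it just as much.

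Once that is patched, your proof is correct, and it is worth noting that the paper avoids the two-stage structure altogether: it runs a single perturbation argument with the equioscillation partition $\cI=\{i: m_i^*(\ww)=\mol^*(\ww)\}$, $\cJ=\{j: m_j^*(\ww)<\mol^*(\ww)\}$, observing that any degenerate arc automatically lies in $\cJ$ (its maximum is $-\infty$ by singularity), so the hypothesis $w_i<w_{i+1}$ for $i\in\cI$ holds without first proving $\ww\in\LS$; continuity of the $m_j^*$ then handles all $\cJ$-indices, degenerate or not, in one stroke, and membership in $\LS$ falls out afterwards because an equioscillating system with finite value $\mol^*(\ww)$ can have no degenerate (hence $-\infty$-valued) arc. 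Your separate Step 2 is thus not wrong in spirit, but it duplicates the continuity argument and is precisely the place where the missing justification hides; merging it into the equioscillation step, as the paper does, is both shorter and safer.
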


\begin{proof}
We already know that $\mol^*$ is continuous,
thus it attains its infimum at a minimum point
(where, in view of $\mol^* : \TT^n \to \RR$, a finite minimax value is attained).

Now assume for a contradiction that
the obtained minimax node system $\ww$ is not an equioscillating system.
Then there are indices with $m^*_i(\ww)=\mol^*(\ww)$,
but not all indices are such.
So, take $\cI:=\{ i~:~ m_i^*(\ww)=\mol^*(\ww)\}$
and $\cJ:=\{1,2,\ldots,n\}\setminus \cI=\{ j~:~ m_j^*(\ww)<\mol^*(\ww)\}$.
These index sets will define a nontrivial
partition of the full set of indices from 1 to $n$.

In order to apply the Perturbation Lemma we will need that for $i\in \cI$ the endpoint nodes are different: $w_i<w_{i+1}$.
Given that $m_i^*(\ww)=\mol^*(\ww)$, this is certainly so
in case $K$ is singular, for then $m_\ell^*(\ww)=-\infty<\mol^*(\ww)$ for any degenerate arc $I^*_\ell(\ww)$.
Let us now apply Lemma \ref{lem:gen-max-perturbation},
which results in a new node system $\ww'$,
admitting the same cyclic ordering and lying
arbitrarily close to $\ww$, and with $m^*_i(\ww')<\mol^*(\ww)$ for all
$i\in \cI$.
On the other hand, the $m_j^*(\ww')$ may exceed $m^*_j(\ww)$
for $j\in \cJ$,
but only by arbitrarily little, because
$\ww'$ is sufficiently close to $\ww$ and the
$m_j^*$'s are continuous on $\oLS$ in view of the
singularity of $K$,
see Proposition \ref{prop:mjstarcont} \ref{mjstarcont:parta}.
So, in all,
we will have $m^*_k(\ww')<\mol^*(\ww)$ for all $k=1,\ldots,n$,
hence $\mol^*(\ww')<\mol^*(\ww)$,
contradicting to minimality of $\ww$.

Noting that an equioscillation node system is
necessarily nondegenerate for singular kernels,
we conclude the proof.

\end{proof}

\subsection{Maximin for strictly concave kernels}

\begin{thm}\label{thm:maximin} Let $n \in \NN$ and $\nu_1,\ldots,\nu_n>0$,
let $K$ be a singular, strictly concave periodic kernel function, and
let $J$ be an arbitrary periodic $n$-field function.

Then there exists a maximin point $\zz$ on $\overline{\LS}$,
it belongs to the open
\enquote{cyclic simplex} $\LS$, and
it is an equioscillation point.
\end{thm}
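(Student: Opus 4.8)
The plan is to mirror the proof of Theorem~\ref{thm:minimax}, but working with $\mul^*$ in place of $\mol^*$ and reversing the roles of the index sets. First I would note that, unlike $\mol^*$, the function $\mul^*$ need \emph{not} be continuous on all of $\oLS$, since it is the minimum of the arc maxima $m_k^*$, some of which take the value $-\infty$ on degenerate arcs; however, by Proposition~\ref{prop:mjstarcont}~\ref{mjstarcont:parta} each $m_k^*$ is extended continuous on the compact space $\oLS$ (here $K$ is singular), hence $\mul^*$ is extended continuous, and therefore attains its supremum at some $\zz\in\oLS$. The value $m^*(\oLS):=\mul^*(\zz)$ is the maximin value. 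One must still check that this value is finite (i.e.\ $>-\infty$): this follows because one can exhibit \emph{some} nondegenerate $\yy\in\LS$ with all arcs nonsingular, for which $\mul^*(\yy)>-\infty$; since $J$ is finite on more than $n$ points of $\TT$, one can choose the $n$ nodes so that each of the $n$ arcs contains a point where $J$ is finite, and then $\mul^*(\yy)>-\infty$, so $m^*(\oLS)>-\infty$.

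Next I would argue, by contradiction, that the maximin point $\zz$ must be an equioscillation point. Suppose not; then the set $\cJ:=\{j:\ m_j^*(\zz)=\mul^*(\zz)\}$ of ``active'' (minimal) arcs is a proper nonempty subset, and $\cI:=\{1,\dots,n\}\setminus\cJ=\{i:\ m_i^*(\zz)>\mul^*(\zz)\}$ is also nonempty; together they form a nontrivial partition. The idea is to apply the General Maximum Perturbation Lemma~\ref{lem:gen-max-perturbation} to \emph{shrink} the arcs indexed by $\cI$ and \emph{enlarge} those indexed by $\cJ$, so that all the $m_j^*$ for $j\in\cJ$ strictly increase, while the $m_i^*$ for $i\in\cI$ only decrease by an arbitrarily small amount (by continuity, since $\zz'$ can be taken arbitrarily close to $\zz$ and the $m_i^*$ are continuous by Proposition~\ref{prop:mjstarcont}~\ref{mjstarcont:parta}); since the $m_i^*(\zz)$ with $i\in\cI$ start strictly above $\mul^*(\zz)$, they stay strictly above it after a small enough perturbation, and meanwhile every $m_j^*(\zz')$ with $j\in\cJ$ is now strictly larger than $\mul^*(\zz)=m_j^*(\zz)$. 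Hence $\mul^*(\zz')>\mul^*(\zz)$, contradicting maximality of $\zz$.

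To invoke Lemma~\ref{lem:gen-max-perturbation} with this choice of partition I must check its hypothesis: for every $i\in\cI$ we need $z_i<z_{i+1}$, i.e.\ the arc $I_i^*(\zz)$ is nondegenerate. This holds because $K$ is singular: on a degenerate arc $I_\ell^*(\zz)$ one has $m_\ell^*(\zz)=F(\zz,z_\ell)=J(z_\ell)+f(\zz,z_\ell)=-\infty$ (the summand $\nu_\ell K(0)=-\infty$), so such an $\ell$ would lie in $\cJ$ (being a place where $\mul^*$, which is $>-\infty$ by the finiteness step, is \emph{not} attained only if $\mul^*>-\infty$; more precisely $m_\ell^*(\zz)=-\infty\le\mul^*(\zz)$ forces $m_\ell^*(\zz)=\mul^*(\zz)$, i.e.\ $\ell\in\cJ$). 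Wait --- I should be careful here: if $m_\ell^*(\zz)=-\infty$ then in fact $\mul^*(\zz)=-\infty$, contradicting finiteness; so actually the maximin point $\zz$ has \emph{all} arcs nondegenerate and nonsingular, and in particular $z_i<z_{i+1}$ for all $i\in\cI$, as required. Finally, applying the ``Moreover'' and ``Furthermore'' clauses of Lemma~\ref{lem:gen-max-perturbation} (valid since $K$ is strictly concave) gives the strict inequalities $m_j^*(\zz')>m_j^*(\zz)$ for $j\in\cJ$, because these arcs are nonsingular. The resulting perturbed point $\zz'$ also lies in $\LS$ since it is less degenerate than $\zz$, which was already in $\LS$. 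This yields the contradiction, and since equioscillation forces nondegeneracy, $\zz\in\LS$.

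The main obstacle I expect is precisely the finiteness of the maximin value and the associated discontinuity issue: one must rule out the possibility that the supremum of $\mul^*$ is $-\infty$ or is attained only in a degenerate configuration where the perturbation lemma's hypothesis ($z_i<z_{i+1}$ for $i\in\cI$) could fail. The singularity of $K$ is exactly what resolves this --- it forces any maximin point with finite value to have all arcs nondegenerate --- so the genuinely delicate point is the exhibition of one admissible configuration with finite $\mul^*$, which rests on the field function being finite on more than $n$ points of $\TT$. Everything else is a direct transcription of the minimax argument with $\cI$ and $\cJ$, and the directions of the inequalities, interchanged.
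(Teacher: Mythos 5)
Your proposal is correct and follows essentially the same route as the paper: extended continuity of the $m_k^*$ (Proposition~\ref{prop:mjstarcont}~\ref{mjstarcont:parta}) yields existence of a maximin point, singularity of $K$ together with finiteness of the maximin value forces $\zz\in\LS$, and the contradiction via Lemma~\ref{lem:gen-max-perturbation} with $\cI=\{i:\ m_i^*(\zz)>\mul^*(\zz)\}$, $\cJ=\{j:\ m_j^*(\zz)=\mul^*(\zz)\}$, using continuity of the $m_i^*$ and strictness of the increase on the nonsingular $\cJ$-arcs, is exactly the paper's argument. The only (harmless) difference is the finiteness witness: you construct a node system whose every arc contains a point where $J$ is finite, whereas the paper simply points to the minimax point already found in Theorem~\ref{thm:minimax}, all of whose arc maxima are finite.
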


\begin{proof}
Again, as all the $m_k^*$ are continuous,
so is their minimum.
The maximum of that minimum is finite,
because there are points with all $m_k^*$ finite:
e.g., take the above found minimax point $\ww$.
Given that $K$ is assumed to be singular,
that gives that such a point
with $\mul^*(\zz)>-\infty$ cannot be degenerate,
i.e., $\zz \in \LS$.
Also, by continuity of the $m_k^*$ and hence of $\mul^*$,
there exists a maximin point $\zz \in \LS$.

Assume for a contradiction that this point is not an equioscillation point.
Consider $\cI:=\{ i~:~ m_i^*(\zz)>\mul^*(\zz)\}$
and $\cJ := \{1,\dots,n\}\setminus \cI=\{j~:~ m_j^*(\zz)=\mul^*(\zz)\}$.
This is a nontrivial partition of the index set $\{1,\ldots,n\}$,
while $\zz \in \LS$,
hence the Perturbation Lemma \ref{lem:gen-max-perturbation} can be applied, and
we are led to a new system $\zz'\in \LS$,
with $m^*_j(\zz')>m_j^*(\zz)=\mul^*(\zz)$ for all $j\in \cJ$.
However, we also have $m_i^*(\zz')>m^*_i(\zz)-\ve >\mul^*(\zz)$ for all $i\in \cI$,
if $\ve$ was chosen small enough and $\zz'$ close enough to $\zz$,
because $m_i^*$ is continuous.
So, in all, we find $\mul^*(\zz')>\mul^*(\zz)$,
and $\zz$ could not be a maximin point.
The obtained contradiction proves the assertion.
\end{proof}

\subsection{Extension to concave kernel functions}

To extend Theorem \ref{thm:minimax}
to general concave kernels,
we apply limiting arguments similar to \cite{JMAA}, p.~18.

\begin{thm}
Let $n \in \NN$ and $\nu_1,\ldots,\nu_n>0$,
let $K$ be a singular periodic kernel function, and
let $J$ be an arbitrary periodic $n$-field function.

Then there exists a minimax point $\ww^*$ on $\overline{\LS}$,
it belongs to the open \enquote{cyclic simplex} $\LS$,
and it is an equioscillation point.
\label{thm:genminimax}
\end{thm}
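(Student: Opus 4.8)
The plan is to approximate the (possibly merely concave) singular kernel $K$ by a sequence of singular, strictly concave periodic kernels $K^{(m)}$, apply Theorem \ref{thm:minimax} to each, and pass to the limit. First I would construct the approximants: set $K^{(m)}(t):=K(t)+\tfrac1m \log|\sin(\pi t)|$, or more generally $K^{(m)}:=K+\tfrac1m L$ where $L(t):=\log|\sin(\pi t)|$ is the log-sine kernel. Each $K^{(m)}$ is $1$-periodic (sum of two periodic kernels), concave on $(-1,0)$ and $(0,1)$ as a sum of concave functions, satisfies the matching-limit condition \eqref{eq:Kzero}, and is singular since both summands are; moreover it is \emph{strictly} concave because $L$ is strictly concave and adding a concave function preserves strict concavity. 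Also $K^{(m)} \to K$ pointwise on $(-1,0)\cup(0,1)$, indeed monotonically decreasing to $K$, and uniformly on compact subsets away from $0$.

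Next, for each $m$ Theorem \ref{thm:minimax} applied with kernel $K^{(m)}$, coefficients $\nu_j$, and the same field function $J$ yields an equioscillating minimax node system $\ww^{(m)}\in \LS$ for the sum of translates function $F^{(m)}(\ww,t):=J(t)+\sum_{j=1}^n \nu_j K^{(m)}(t-w_j)$; write $M^{(m)}:=\mol^{*,(m)}(\ww^{(m)})=M^*(\oLS)$ for the corresponding minimax value. Since $\oLS$ is compact, after passing to a subsequence we may assume $\ww^{(m)}\to \ww^*\in \oLS$. The key step is to identify $\ww^*$ as a minimax point for $K$ that still lies in $\LS$ and still equioscillates. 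For this I would use the following three facts: (i) for fixed $\ww\in\oLS$, $\mol^{*,(m)}(\ww)$ decreases to $\mol^*(\ww)$ (the pure sum of translates $f^{(m)}(\ww,\cdot)$ decreases pointwise to $f(\ww,\cdot)$, and the supremum of a decreasing sequence of functions converges to the supremum of the limit, using also upper semicontinuity/continuity from Proposition \ref{prop:molstarcontinuous}); (ii) consequently $M^{(m)}=\inf_{\oLS}\mol^{*,(m)}$ decreases, and its limit is $\inf_{\oLS}\mol^* = M^*(\oLS)$, the minimax value for $K$; (iii) by continuity of $\mol^*$ (Proposition \ref{prop:molstarcontinuous}), $\mol^*(\ww^*)\le \liminf_m \mol^{*,(m)}(\ww^{(m)}) = \liminf_m M^{(m)} = M^*(\oLS)$, so $\ww^*$ attains the minimax value, i.e. $\ww^*$ is a minimax point for $K$.

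It then remains to show $\ww^*\in \LS$ and that it equioscillates. Non-degeneracy: if $\ww^*$ had a degenerate arc $I^*_\ell(\ww^*)$, then singularity of $K$ forces $m_\ell^*(\ww^*)=-\infty$, whereas $\mol^*(\ww^*)=M^*(\oLS)<\infty$; but a minimax point must have all arc maxima $\le$ finite minimax value and, more to the point, one shows $\mol^*$ is finite everywhere so a point with $\mol^*$ finite can be degenerate, yet a degenerate arc doesn't by itself contradict this — so instead I would argue as in Theorem \ref{thm:minimax}: run the Perturbation Lemma argument directly on $\ww^*$ with the original kernel $K$. Since $K$ is singular, the $m_k^*$ are extended continuous on $\oLS$ by Proposition \ref{prop:mjstarcont}\ref{mjstarcont:parta}, so the proof of Theorem \ref{thm:minimax} goes through verbatim once we know $\ww^*$ is a minimax point: if $\ww^*$ were not equioscillating, the partition $\cI:=\{i: m_i^*(\ww^*)=\mol^*(\ww^*)\}$, $\cJ$ its complement is nontrivial, for $i\in\cI$ the arc $I_i^*(\ww^*)$ is nondegenerate (singularity of $K$), Lemma \ref{lem:gen-max-perturbation} and continuity of the $m_k^*$ produce $\ww'$ with $\mol^*(\ww')<\mol^*(\ww^*)$, contradicting minimality. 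Hence $\ww^*$ equioscillates, and an equioscillating system for a singular kernel is automatically nondegenerate, so $\ww^*\in\LS$.

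I expect the main obstacle to be step (ii)/(iii): carefully justifying the interchange of the limit in $m$ with the infimum over $\oLS$ and with the supremum over $t\in\TT$ defining $\mol^*$, given that $J$ is only upper bounded and possibly discontinuous and that $K^{(m)}(t-w_j)\to-\infty$ as $w_j$-arcs degenerate. The decreasing monotonicity $K^{(m)}\downarrow K$ is what saves the argument — it makes the convergence $\mol^{*,(m)}(\ww)\downarrow\mol^*(\ww)$ clean and, combined with Dini-type reasoning against the compactness of $\oLS$, gives $M^{(m)}\downarrow M^*(\oLS)$ — but the bookkeeping around singular arcs and the non-continuity of $J$ needs the care already exercised in the continuity results of Section 3 and in \cite{JMAA}, p.~18, which is the template to follow.
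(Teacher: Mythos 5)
Your construction of strictly concave singular approximants and the compactness/limit extraction match the paper's strategy (the paper uses the bounded perturbation $K^{(\eta)}:=K+\eta|\sin\pi t|$, which decreases to $K$ \emph{uniformly}, so no Dini-type argument is needed; your choice $K+\tfrac1m\log|\sin\pi t|$ approximates from \emph{below} and increases to $K$, not "monotonically decreasing" as you write, so the inequalities in your steps (i)--(iii) are reversed and need the uniform convergence you only get after invoking Dini). These are repairable bookkeeping issues. The genuine gap is in the last step, which you dismiss as routine: you cannot prove equioscillation of the limit point $\ww^*$ by running the proof of Theorem \ref{thm:minimax} "verbatim" with the original kernel $K$, because that proof needs the \emph{strict} inequalities $m_i^*(\ww')<m_i^*(\ww^*)=\mol^*(\ww^*)$ for $i\in\cI$, and Lemma \ref{lem:gen-max-perturbation} delivers strictness only for strictly concave $K$. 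For a merely concave $K$ the perturbation gives only $m_i^*(\ww')\le\mol^*(\ww^*)$, so you get $\mol^*(\ww')\le\mol^*(\ww^*)$ and no contradiction with minimality; indeed, if that argument worked for general concave singular kernels, Theorem \ref{thm:genminimax} would follow directly and the whole approximation scheme would be superfluous.

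What is actually needed --- and what the paper does --- is to extract equioscillation of $\ww^*=\ee$ from the equioscillation of the approximating minimax systems $\ee(\eta_k)$: assuming $m_j^*(\ee)<\mol^*(\ee)$ for some $j$, one uses the monotone convergence $m_j^*(\eta,\ee)\searrow m_j^*(\ee)$ to find $k_0$ with $m_j^*(\eta_{k_0},\ee)<\mol^*(\ee)$, then the continuity of $m_j^*(\eta_{k_0},\cdot)$ (Proposition \ref{prop:mjstarcont}\ref{mjstarcont:parta}, available because $K^{(\eta_{k_0})}$ is singular) to propagate this to $\ee(\eta_k)$ for large $k$, and finally the chain
$m_j^*(\eta_k,\ee(\eta_k))\le m_j^*(\eta_{k_0},\ee(\eta_k))<\mol^*(\ee)=M^*(\oLS)\le\mol^*(\ee(\eta_k))\le\mol^*(\eta_k,\ee(\eta_k))$
contradicts the equioscillation of $\ee(\eta_k)$ for the kernel $K^{(\eta_k)}$. (Note this chain uses the approximation from above, $\mol^*(\eta,\cdot)\ge\mol^*(\cdot)$ and $m_j^*(\eta,\cdot)$ decreasing in $\eta$; with your from-below approximants these monotonicities flip and the argument would have to be reworked.) Once equioscillation is established, your final observation that singularity of $K$ forces $\ww^*\in\LS$ is correct and agrees with the paper.
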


\begin{proof}
Let $K$ be a singular,
$1$-periodic kernel function
which is not necessarily strictly concave.
Let $K^{(\eta)}(t):=K(t)+\eta |\sin\pi t|$
where $\eta>0$.
Then $K^{(\eta)}$ is a strictly concave kernel function,
which is also singular and $1$-periodic.
We will denote the corresponding
maximum functions and minimax quantity by
$m_j^*(\eta,\yy)$,
$\mol^*(\eta,\yy)$,
and $M^*(\eta,\oLS)$.
Taking into account that $K^{(\eta)}$ converges to $K$ uniformly, we find the same for $F(\eta,\cdot) \searrow  F$ and hence even
$\mol^*(\eta,\yy)\searrow \mol^*(\yy)$ uniformly.
Hence $M^*(\eta,\LS)\searrow M^*(\LS)$, too.

Let $\ee(\eta)\in \oLS$ be a node system
such that $\mol^*(\eta,\ee(\eta))=M^*(\eta,\oLS)$.
By Theorem \ref{thm:minimax},
$\ee(\eta)\in \LS$
and it is an equioscillating node system:
$m_1^*(\eta,\ee(\eta))=\ldots=m_n^*(\eta,\ee(\eta))$.
Since $\oLS$ is compact,
there exists $\eta_k\searrow 0$
such that $\ee(\eta_k)\rightarrow\ee$
for some $\ee\in\oLS$ as $k\rightarrow \infty$.

Then $\mol^*(\ee)=M^*(\oLS)$.
Indeed, $\mol^*(\ee)\ge M^*(\oLS)$ and for the other
direction, let $a>M^*(\oLS)$.
Then for all sufficiently large $k$ we have $a\geq M^*(\eta_k,\oLS)$,
so we can conclude
\[
a\geq M^*(\eta_k,\oLS)
=\mol^*(\eta_k,\ee(\eta_k))
\geq \mol^*(\ee(\eta_k))
\]
where we used $\mol^*(\eta,\cdot)\ge \mol^*(\cdot)$.
Letting $k\to\infty$ we conclude,
by the continuity of
$\mol^*$ and by $\ee(\eta_k)\to \ee$,
that
$a\geq \mol^*(\ee)$ and
$M^*(\oLS)\geq \mol^*(\ee)$ follows.
The claim is proved.

Next we claim that $\ee$ is an equioscillation point.
Indeed, assume for a contradiction that
for some $j\in \{1,\dots,n\}$
we have $m_j^*(\ee)<\mol^*(\ee)$.
Then there is $k_0\in \NN$ such that
$m_j^*(\eta_{k_0},\ee) <\mol^*(\ee)$.
Since $m_j^*(\eta_{k_0},\cdot)$ is continuous
(the kernel functions are singular; see Proposition \ref{prop:mjstarcont} \ref{mjstarcont:parta}),
there is $\delta>0$ such
that for every $\yy\in \oLS$
with $\mathrm{dist}_{\bT^n}(\yy,\ee)<\delta$
one has
$m_j^*(\eta_{k_0},\yy)
<\mol^*(\ee)$, too.

There is $n_0\in \NN$ such that for every $k\geq n_0$
we have
$\mathrm{dist}_{\bT^n}(\ee(\eta_{k}),\ee)<\delta$.
So for $k\geq\max\{k_0,n_0\}$
we can write
\[
m_j^*(\eta_{k},\ee(\eta_k))
\leq
m_j^*(\eta_{k_0},\ee(\eta_k))
<
\mol^*(\ee) = M^*(\oLS)
\leq
\mol^*(\ee(\eta_k))
\leq
\mol^*(\eta_k,\ee(\eta_k)).
\]
This is a contradiction,
since $m_i^*(\eta_{k},\ee(\eta_k))
=\mol^*(\eta_k,\ee(\eta_k))$
for each $i\in \{1,\dots,n\}$.

Therefore, $\ee$ is necessarily an equioscillation point.
As such, it cannot have any degenerate
subarcs, for any
degenerate subarc $I_k^*(\ee)=\{e_k\}$
would yield a
singular value  $m_k^*(\yy)=-\infty$
according to the
singularity of $K$. Hence $\ee \in \LS$.

Choosing $\ww^*:=\ee$ concludes the proof.
\end{proof}

Now we extend  Theorem \ref{thm:maximin}
to general concave kernels,
but this time we approximate
the kernel from below.

\begin{thm}
\label{thm:maximin-nonstrict}
Let $n \in \NN$ and $\nu_1,\ldots,\nu_n>0$,
let $K$ be a singular periodic kernel function, and
let $J$ be an arbitrary periodic $n$-field function.

Then there exists a maximin point $\zz^*$
on $\overline{\LS}$,
it belongs to the open \enquote{cyclic simplex} $\LS$,
and it is an equioscillation point.
\end{thm}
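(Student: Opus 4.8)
The plan is to copy the scheme by which Theorem \ref{thm:genminimax} was deduced from Theorem \ref{thm:minimax}, but approximating $K$ from \emph{below} by strictly concave kernels. Concretely, for $\eta>0$ I would set
\[
K^{(\eta)}(t):=K(t)+\eta\big(|\sin\pi t|-1\big).
\]
Since $\eta|\sin\pi t|$ is strictly concave on each of $(-1,0)$ and $(0,1)$ and $-\eta$ is a constant, $K^{(\eta)}$ is a strictly concave kernel function; it is singular because $K^{(\eta)}(0)=K(0)-\eta=-\infty$, and it is $1$-periodic. The point of the shift by $-\eta$ is that $|\sin\pi t|-1\le 0$, so $K^{(\eta)}\le K$ pointwise, while $\|K^{(\eta)}-K\|_\infty=\eta\to 0$ and $K^{(\eta)}\nearrow K$ monotonically as $\eta\searrow 0$. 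Writing $m_j^*(\eta,\cdot)$, $\mul^*(\eta,\cdot)$, $m^*(\eta,\oLS)$ for the corresponding arc maxima, minimal arc maximum, and maximin value, the inequality $F^{(\eta)}\le F$ together with $|F^{(\eta)}(\yy,t)-F(\yy,t)|\le \eta\sum_j\nu_j$ gives $m_j^*(\eta,\yy)\le m_j^*(\yy)$, $\mul^*(\eta,\yy)\le\mul^*(\yy)$, $m^*(\eta,\oLS)\le m^*(\oLS)$, all with monotone uniform convergence as $\eta\searrow0$; in particular $m^*(\eta,\oLS)\nearrow m^*(\oLS)$. Note $m^*(\oLS)$ is finite: e.g.\ the equioscillating minimax point $\ww^*$ from Theorem \ref{thm:genminimax} has all $m_k^*(\ww^*)$ finite.

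Next I would apply Theorem \ref{thm:maximin} to each $K^{(\eta)}$, obtaining an equioscillating maximin point $\zz(\eta)\in\LS$ with $m_1^*(\eta,\zz(\eta))=\cdots=m_n^*(\eta,\zz(\eta))=m^*(\eta,\oLS)$. By compactness of $\oLS$, fix $\eta_k\searrow0$ with $\zz(\eta_k)\to\zz\in\oLS$. As all $K_j=\nu_jK$ are singular, Proposition \ref{prop:mjstarcont} \ref{mjstarcont:parta} makes each $m_j^*$ extended continuous on $\oLS$, hence $\mul^*$ is extended continuous. From $K^{(\eta_k)}\le K$ we get $\mul^*(\zz(\eta_k))\ge\mul^*(\eta_k,\zz(\eta_k))=m^*(\eta_k,\oLS)$, so letting $k\to\infty$, by continuity of $\mul^*$ and $m^*(\eta_k,\oLS)\to m^*(\oLS)$, one obtains $\mul^*(\zz)\ge m^*(\oLS)$; the reverse inequality is the definition of $m^*(\oLS)=\sup_\yy\mul^*(\yy)$. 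Hence $\mul^*(\zz)=m^*(\oLS)$, i.e.\ $\zz$ is a maximin point, and in particular $\mul^*(\zz)>-\infty$.

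To see that $\zz$ equioscillates I would reverse the inequalities of the argument in the proof of Theorem \ref{thm:genminimax}. Suppose for contradiction $m_j^*(\zz)>\mul^*(\zz)=m^*(\oLS)$ for some $j$. Since $m_j^*(\eta,\zz)\nearrow m_j^*(\zz)$, pick $k_0$ with $m_j^*(\eta_{k_0},\zz)>m^*(\oLS)$; by continuity of $m_j^*(\eta_{k_0},\cdot)$ (as $K^{(\eta_{k_0})}$ is singular) there is $\delta>0$ with $m_j^*(\eta_{k_0},\yy)>m^*(\oLS)$ whenever $\mathrm{dist}_{\bT^n}(\yy,\zz)<\delta$. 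For large $k$ we have both $\mathrm{dist}_{\bT^n}(\zz(\eta_k),\zz)<\delta$ and $\eta_k\le\eta_{k_0}$, whence $m_j^*(\eta_k,\zz(\eta_k))\ge m_j^*(\eta_{k_0},\zz(\eta_k))>m^*(\oLS)$; but $\zz(\eta_k)$ equioscillates for $K^{(\eta_k)}$, so $m_j^*(\eta_k,\zz(\eta_k))=m^*(\eta_k,\oLS)\le m^*(\oLS)$ --- a contradiction. Thus $\zz$ is an equioscillation point, and for a singular kernel it can have no degenerate arc (a degenerate $I_k^*(\zz)=\{z_k\}$ would force $m_k^*(\zz)=-\infty$ via the self-term $\nu_kK(0)$, hence all $m_\ell^*(\zz)=-\infty$, contradicting $\mul^*(\zz)=m^*(\oLS)>-\infty$). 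So $\zz\in\LS$, and taking $\zz^*:=\zz$ finishes the proof.

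I expect the only real subtlety to be the first step --- choosing the approximating family in the \emph{correct direction}. The minimax argument approximated from above; here one must approximate $K$ from below by strictly concave singular periodic kernels so that $m^*(\eta,\oLS)\le m^*(\oLS)$ and $m^*(\eta,\oLS)\nearrow m^*(\oLS)$, which is exactly what makes $\mul^*(\zz(\eta_k))\ge m^*(\eta_k,\oLS)\to m^*(\oLS)$ pin down the limit. Once the monotone minorant $K+\eta(|\sin\pi\cdot|-1)$ is in hand, everything else is a routine mirror image of the Theorem \ref{thm:genminimax} computation.
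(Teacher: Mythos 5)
Your proposal is correct and follows essentially the same route as the paper: the same minorant family $K^{(\eta)}=K+\eta(|\sin\pi t|-1)$, the same application of Theorem \ref{thm:maximin} to get equioscillating maximin points $\zz(\eta)\in\LS$, a compactness/limit argument pinning down $\mul^*(\zz)=m^*(\oLS)$, and the same continuity-based contradiction (via Proposition \ref{prop:mjstarcont}) to force equioscillation and nondegeneracy. No gaps to report.
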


\begin{proof}
Let $K$ be a singular,
$1$-periodic kernel function
which is not necessarily strictly concave.
Let $K^{(\eta)}(t):=K(t)+\eta \big( |\sin\pi t|-1\big)$
where $\eta>0$.
Then $K^{(\eta)}$ is a strictly concave kernel function,
which is also singular and $1$-periodic
and $K^{(\eta)}\nearrow K$
as $\eta\searrow 0$,
moreover
this convergence is uniform.
Again,
we denote the corresponding
maximum functions and maximin quantity by
$m_j^*(\eta,\yy)$,
$\mul^*(\eta,\yy)$
and $m^*(\eta,\oLS)$.
Due to the uniform convergence of $K^{(\eta)}$'s,
we also have
$\mul^*(\eta,\yy)\nearrow \mul^*(\yy)$
uniformly, in the extended sense.
Therefore $m^*(\eta,\oLS)\nearrow m^*(\oLS)$, too.

Let $\ee(\eta)\in \oLS$ be a node system
such that $\mul^*(\eta,\ee(\eta))=m^*(\eta,\oLS)$.
By Theorem \ref{thm:maximin},
$\ee(\eta)\in \LS$
and it is an equioscillating node system:
$m_1^*(\eta,\ee(\eta))=\ldots=m_n^*(\eta,\ee(\eta))$.
Since $\oLS$ is compact,
there exists $\eta_k\searrow 0$
such that $\ee(\eta_k)\rightarrow\ee$
for some $\ee\in\oLS$ as $k\rightarrow \infty$.

Then $\mul^*(\ee)=m^*(\oLS)$.
Indeed, $\mul^*(\ee)\le m^*(\oLS)$ and for the other
direction, let $b<m^*(\oLS)$.
Then for all sufficiently large $k$
we have $b\leq m^*(\eta_k,\oLS)$,
so we can conclude
\[
b\leq m^*(\eta_k,\oLS)
=\mul^*(\eta_k,\ee(\eta_k))
\leq \mul^*(\ee(\eta_k))
\]
where we used $\mul^*(\eta,\cdot)\le \mul^*(\cdot)$.
Letting $k\to\infty$ we conclude,
by the extended continuity of
$\mul^*$  and by $\ee(\eta_k)\to \ee$,
that
$b\leq \mul^*(\ee)$ and
$m^*(\oLS)\leq \mul^*(\ee)$ follows.
The claim is proved.

Next we claim that $\ee$ is an equioscillation point.
Assume for a contradiction that
for some $j\in \{1,\dots,n\}$
we have $m_j^*(\ee)>\mul^*(\ee)$.
Then there is $k_0\in \NN$ such that
$m_j^*(\eta_{k_0},\ee) >\mul^*(\ee)$.
Since $m_j^*(\eta_{k_0},\cdot)$ is continuous
(the kernel functions are singular; see Proposition \ref{prop:mjstarcont} \ref{mjstarcont:parta}),
there is $\delta>0$ such
that for every $\yy\in \oLS$
with $\mathrm{dist}_{\bT^n}(\yy,\ee)<\delta$
one has
$m_j^*(\eta_{k_0},\yy)
>\mul^*(\ee)$, too.

There is $n_0\in \NN$ such that for every $k\geq n_0$
we have
$\mathrm{dist}_{\bT^n}(\ee(\eta_{k}),\ee)<\delta$.
So for $k\geq\max\{k_0,n_0\}$
we can write
\[
m_j^*(\eta_{k},\ee(\eta_k))
\geq
m_j^*(\eta_{k_0},\ee(\eta_k))
>
\mul^*(\ee)=m^*(\oLS)
\geq
\mul^*(\ee(\eta_k))
\geq
\mul^*(\eta_k,\ee(\eta_k)).
\]
This is a contradiction,
since $m_i^*(\eta_{k},\ee(\eta_k))
=\mul^*(\eta_k,\ee(\eta_k))$
for each $i\in \{1,\dots,n\}$.

Finally, if $\ee$ is an equioscillating node system and $K$ is singular,
then necessarily $\ee \in \LS$, as before.

So we proved that there is
a node system $\zz^*:=\ee$,
$\zz^*\in \LS$
for which $\mul^*(\zz^*)=m^*(\oLS)$
and $\zz^*$ is an equiocillating
node system.
\end{proof}

\subsection{A counterexample for nonsingular kernels}

\begin{example}
If $K$ is not a singular kernel,
then there can be
no equioscillation,
and
no maximin node systems.
Moreover, all node systems can be solutions of
the minimax problem.
\end{example}

Let $K\equiv 0$,
and $J(1/\ell):=1-1/\ell$
($\ell\in\NN$),
and $J(t):=0$ otherwise.
Let $n:=2$.
Then there is no equioscillation.
We consider the following  cases.
If $y_1=y_2=0$, then
$m_1^*(\yy)=0$ and $m_2^*(\yy)=1$.
If $y_1=0$, $0<y_2<1$,
then $m_1^*(\yy)=1$ and
$m_2^*(\yy)=1-1/\left\lfloor 1/y_2\right\rfloor$.
If $y_1>0$ and $y_2< 1$, then
$m_1^*(\yy)=1-1/\left\lfloor 1/y_1\right\rfloor$
and $m_2^*(\yy)=1$.
It is easy to see that there is no equioscillating node system.
To verify the assertions about minimax node systems,
note that $\mol^*(\yy)=\max(m_0^*(\yy),m_1^*(\yy))\equiv 1$
hence every node system is a node system
with the minimax value.
By considering node systems $\yy=(0,1/n)$,
we  get that
$\mul^*(\oLS)=1$,
but there is no node system
with $\mul^*(\yy)=1$.

\section{A partial homeomorphism result}

In our earlier papers \cite{TLMS2018, Homeo, Sbornik} on the subject, an outstanding role was played by a new finding, not seen in earlier works of Bojanov \cite{Bojanov} and Fenton \cite{Fenton}. We established, that in case of singular and strictly concave kernels a certain homemorphism exists between admissible node systems and differences of the interval or arc maxima. Since the differences all being zero is equivalent to equioscillation, such a result immediately gives the existence and uniqueness of an equioscillating node system. This entails that proving e.g. that the minimax point is an equioscillating system can be strengthened to say that this equioscillation property \emph{characterizes} the minimax node system. Similarly, if we further prove that a maximin node system is necessarily equioscillating, then it follows that the minimax equals to the maximin, and is attained at that unique point of equioscillation. Furthermore, from the homeomorphism result further consequences could be proved, most importantly about the intertwining of $m_j$, see \cite{Sbornik}.

Above we have proved that minimax and maximin node systems exist and that they are necessarily equioscillating node systems. Therefore it is most natural to try to complete the picture by a corresponding homeomorphism theorem. In fact, even for the torus setup such a homeomorphism theorem was already proved in \cite{TLMS2018}, when there were no weigthts allowed, and where we assumed a few technical assumptions, too. In our present notation Corollary 9.3 of \cite{TLMS2018} runs as follows.

\begin{thm}\label{thm:TLMS-homeo}
Suppose that for each $j = 0,\ldots, n$ the kernel $K_j$ belongs
to $C^2(0,1)$ with $K_j'' < 0$
and satisfies \eqref{cond:infty}.
Let $S := \{ \yy \in \TT^n~:~ 0<y_1<\cdots<y_n<1$ be the open simplex,
while $y_0$ is understood as fixed at $y_0=0$.

Then the difference mapping
$\Phi(\yy):=(m_1^*(\yy)-m_0^*(\yy),\ldots, m_n^*(\yy)-m_{n-1}^*(\yy))$
is a homeomorphism of $S$ onto $\RR^n$.
\end{thm}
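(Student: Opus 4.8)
The plan is to follow the standard three-step scheme behind homeomorphism results of this kind (compare \cite{Fenton,TLMS2018,Sbornik}): show that $\Phi$ is \emph{continuous}, \emph{injective} and \emph{proper}, and then finish by a short point-set argument. For continuity, note first that on the \emph{open} simplex $S$ no arc $I_j^*(\yy)$ degenerates, so every arc maximum $m_j^*(\yy)$ is finite; since each $K_j$ satisfies \eqref{cond:infty}, the continuity of each $m_j^*$ (as in Proposition \ref{prop:mjstarcont} \ref{mjstarcont:parta}, here with the node $y_0$ anchored at $0$) yields that $\Phi\colon S\to\RR^n$ is a well-defined continuous map.

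Next I would prove properness by showing that $|\Phi(\yy)|\to\infty$ as $\yy$ approaches any boundary point $\yy^*$ of $S$. At such a $\yy^*$ at least one arc, say $I_\ell^*$, shrinks to a point, while -- because the $n+1$ arc lengths always sum to $1$ -- at least one arc, say $I_p^*$, keeps length bounded away from $0$. Evaluating $F$ at an interior point of $I_p^*(\yy^*)$ that differs from $0$ and from every node of $\yy^*$ shows $\liminf m_p^*(\yy)>-\infty$, whereas the singularity \eqref{cond:infty} of $K_\ell$ forces $m_\ell^*(\yy)\to-\infty$; since $m_p^*$ and $m_\ell^*$ are joined by a chain of at most $n$ of the coordinate differences $m_j^*-m_{j-1}^*$, some coordinate of $\Phi(\yy)$ must blow up. Hence for every compact $C\subset\RR^n$ the set $\Phi^{-1}(C)$ is a closed subset of the compact $\overline S$ avoiding $\partial S$, so it is compact; thus $\Phi$ is proper, in particular a closed map.

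The core of the argument, and the step I expect to be the main obstacle, is \emph{injectivity}. Suppose $\yy\ne\yy'$ in $S$ with $\Phi(\yy)=\Phi(\yy')$. Telescoping the equalities of coordinates gives $m_j^*(\yy)-m_j^*(\yy')=c$ independently of $j\in\{0,\dots,n\}$, and we may assume $c\ge 0$. I would then run the comparison argument of \cite{Sbornik,TLMS2018}: partition the index set according to the sign of $y_j-y_j'$, isolate a maximal block of consecutive indices on which $\yy$ and $\yy'$ lie coordinatewise strictly on opposite sides of each other, and push the Perturbation Lemma \ref{lem:widening} -- in the sharpened strictly concave form of Lemma \ref{lem:gen-max-perturbation} -- across that block. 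Because each $K_j$ is singular and \emph{strictly} concave ($K_j''<0$), these estimates come out strict, producing an arc $r$ at one end of the block with $m_r^*(\yy)<m_r^*(\yy')$ and an arc at the other end with the reverse strict inequality, contradicting $m_j^*(\yy)-m_j^*(\yy')\equiv c$. The analytic content here is modest; what is delicate is the bookkeeping of this block structure on the cyclically arranged index set, and it is exactly in forcing the perturbation inequalities to be strict that the hypotheses $K_j\in C^2(0,1)$, $K_j''<0$ and \eqref{cond:infty} get used.

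Finally, identifying $S$ with an open subset of $\RR^n$ via the coordinates $(y_1,\dots,y_n)$, Brouwer's invariance of domain shows that $\Phi(S)$ is open in $\RR^n$; being a proper continuous map into a locally compact Hausdorff space, $\Phi$ is also closed, so $\Phi(S)$ is closed in $\RR^n$; since $\RR^n$ is connected and $\Phi(S)\ne\emptyset$, it follows that $\Phi(S)=\RR^n$. A continuous closed bijection is a homeomorphism, so $\Phi$ is a homeomorphism of $S$ onto $\RR^n$ (and in particular $\Phi^{-1}(0)$ is the unique equioscillating node system).
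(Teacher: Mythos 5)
First, a remark on provenance: this statement is not proved in the present paper at all --- it is recalled verbatim as Corollary~9.3 of \cite{TLMS2018}, and the authors later deliberately bypass re-proving it by reducing their new Theorem~\ref{thm:partial-homeo} to Theorem~18 of \cite{Homeo}. So there is no in-paper proof to match your argument against; the comparison has to be with the original proof in \cite{TLMS2018}, which is a different route from yours (it exploits the $C^2$ smoothness and $K_j''<0$ through differentiability of the arc maxima and a Jacobian/nonsingularity analysis, combined with properness and a topological argument), whereas your outer scheme (continuity, properness, injectivity, invariance of domain plus closedness) is fine as a skeleton, and your continuity and properness steps are essentially correct as sketched.

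The genuine gap is the injectivity step, which is the heart of the theorem and which you dispatch in a few lines. The tools you invoke do not do what you claim. Lemma~\ref{lem:widening} compares a node system with a perturbation of itself in which a \emph{pair} of nodes is moved subject to the balance condition $\mu=1$ in \eqref{eq:mudef}, and Lemma~\ref{lem:gen-max-perturbation} constructs its own special perturbation $w'_\ell=w_\ell-(-1)^\ell h/\nu_\ell$ adapted to a prescribed partition $\cI\cup\cJ$; neither lemma compares two \emph{arbitrary} node systems $\yy\neq\yy'$ with $\Phi(\yy)=\Phi(\yy')$, and the displacements $y_j-y_j'$ have no reason to satisfy any such balance condition. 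Your plan to ``push the Perturbation Lemma across a maximal block'' would require a single-node or blockwise monotone comparison principle, but on the torus no monotonicity of the kernel is available (as the paper stresses, a periodic monotone kernel is constant), so the interval-style comparison arguments of \cite{Sbornik} do not transfer; this is precisely why the original proof in \cite{TLMS2018} needs $K_j\in C^2$ with $K_j''<0$ (hypotheses your sketch never actually uses beyond strict concavity), and why removing them in \cite{Homeo} took substantial new machinery rather than a bookkeeping argument. As written, the claimed strict inequalities $m_r^*(\yy)<m_r^*(\yy')$ at one end of a block and the reverse at the other end are not justified by anything in this paper, so the proof of injectivity --- and hence of the theorem --- is missing.
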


Observe that here the $n+1$ \enquote{to be translated} kernels admit a symmetry:
if we rotate a node system $\yy:=(y_0,y_1,\ldots,y_n)\in\TT^{n+1}$
by say $t\in\TT$, then for the new system $\yy_t:=(y_0+t,y_1+t,\ldots,y_n+t)\in\TT^{n+1}$
we get exactly the same vector of arc maxima ${\bf m^*}(\yy_t):=(m^*_0(\yy_t),m^*_1(\yy_t), \ldots, m^*_n(\yy_t)) \in \RR^{n+1}$, hence in particular also the differences of these maxima remain the same as before.
Therefore it was natural to select one copy of these identical systems
by fixing the value of $y_0$--also
this was the only way the repetitions could be discarded and a homeomorphism could hold.

In our current settings, however, there is an outer field $J$, too.
Once the weight, i.e., the field is not constant,
we no longer have this rotational symmetry.
The situation can be compared to the previous case regarding $K_0(\cdot-y_0)$ not as a kernel with fixed $y_0$, but just as an outer field. Theorem \ref{thm:TLMS-homeo} just says that we have a homeomorphism if the field is strictly concave and singular, and all the kernels satisfy the extra assumptions on differentiability etc.
However, in this interpretation one thing constitutes a major difference:
if we take only the $n$ element node systems $(y_1,\ldots,y_n)\in \TT^n$ accompanied by a field (like e.g. $J:=K_0(\cdot-y_0)$), then there will be only $n$ arcs,
determined by the nodes, so that the arc $I^*_n((y_1,\ldots,y_n))=[y_n,y_1]$ will be the union of the former two arcs $I^*_n(\yy)=[y_n,y_0]$ and $I^*_0(\yy)=[y_0,y_1]$.
Similarly, the maximums will form an $n$-dimensional vector with $m_n^*(y_1,\ldots,y_n)$
becoming the maximum of the former two maxima
$ m_n^*(\yy)$ and $m^*_0(\yy)$.
The maximum differences then form an $n-1$-dimensional manifold, and
we can no longer hope for a homeomorphism from the domain of our $n$-dimensional node systems
to this manifold of differences.

To cure this, we may consider $K_0(\cdot-y_0)$ \emph{both} a fixed kernel and also a field
(say writing $\widetilde{K_0}:=\frac12 K_0$ and $J:=\frac12 K_0(\cdot-y_0)$).
Then the result of Theorem \ref{thm:TLMS-homeo} will refer to the original $n+1$ arc maximums and
their $n$ differences, with a valid homeomorphism result.

With this in mind, we prove that
an analogous \enquote{partial} homemorphism theorem remains in effect even
if there is an arbitrary weight, i.e., field.
In addition, we will surpass all the other technical conditions of Theorem \ref{thm:TLMS-homeo}
by the more advanced technology we have developed in \cite{Homeo},
capable of handling even non-differentiable kernels.
Instead of repeating the technical steps of that proof in the torus context,
we directly reduce the statement to results of \cite{Homeo}.
In fact there we have made a substantial effort to formulate and prove results
which can potentially be used even in the periodic case--and that investment brings a profit here enabling us to refer back to them.
Actually, we will use the following, proved for the case of the interval setup as Theorem 18 in \cite{Homeo}.

\begin{theorem}\label{thm:diffperiodic}
Let $K_1,\dots, K_n$ be strictly concave, singular kernel functions fulfilling condition (PM$_0$)
and
let $J$ be a field function satisfying either
$J(0)=\lim_{t\downto 0} J(t)=-\infty$
or $J(1)=\lim_{t\upto 1} J(t)=-\infty$
(or both).

Then the difference function
$\diff(\xx):=
\big(m_1(\xx)-m_0(\xx),\ldots,m_n(\xx)-m_{n-1}(\xx)\big)$ is a
locally bi-Lipschitz homeomorphism between
$Y:=\{(x_1,\ldots,x_n)\in \oSi:
m_k(\xx)>-\infty ~ (k=0,1,\ldots,n) \}\subset [0,1]^n$ and $\bR^n$.
\end{theorem}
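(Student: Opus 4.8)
The plan is to verify the standard package of properties for a difference map of this kind: that $\Phi$ is continuous, locally bi-Lipschitz (hence open and a local homeomorphism), injective, and proper; a continuous, locally bi-Lipschitz injection $Y\to\RR^n$ is then a homeomorphism onto its image, and once the image is seen to be closed in $\RR^n$ as well, connectedness of $\RR^n$ forces it to be all of $\RR^n$. To set up, observe that since the kernels are singular \eqref{cond:infty}, each arc maximum $m_k$ is continuous as a map $\oSi\to\uR$ (Lemma~3.1 of \cite{JMAA}). A collision $x_i=x_{i+1}$, or an outer node reaching $0$ or $1$, makes the corresponding arc a single point at which $F$ evaluates some $K_j$ at its singularity, so $m_k=-\infty$ there; hence $Y\subseteq\Si$, and $Y=\Si\cap\bigcap_{k=0}^n m_k^{-1}(\RR)$ is open in $\RR^n$. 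On $Y$ all $m_k$ are finite, so $\Phi:Y\to\RR^n$ is continuous.

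Next I would prove the local bi-Lipschitz estimate. Fix $\xx_0\in Y$. Finiteness of every $m_k(\xx_0)$ forces the maximiser of $F(\xx_0,\cdot)$ on each arc to lie in the interior of that arc, at a definite distance from the neighbouring nodes (near a node $F\to-\infty$), and this persists on a neighbourhood $U$ of $\xx_0$. Away from their singularities the kernels are locally Lipschitz, which yields an upper Lipschitz bound for each $m_k$, hence for $\Phi$, on $U$. For the matching lower bound one uses that a perturbation of the node system changes the two adjacent arc maxima strictly and in opposite directions at a rate bounded below on $U$; this is where strict concavity of the kernels, together with \eqref{cond:pm0} and the pull-apart mechanism of the Perturbation Lemma~\ref{lem:widening} (in its interval form, cf.\ \cite{Sbornik}), is essential. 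Organising the resulting difference quotients into a matrix whose ``triangular'' part is invertible with uniformly controlled inverse, one concludes $\Phi|_U$ is bi-Lipschitz onto its image; in particular $\Phi$ is an open map.

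For injectivity, suppose $\xx\neq\yy$ in $Y$ with $\Phi(\xx)=\Phi(\yy)$. Telescoping, $m_k(\xx)-m_k(\yy)$ is the same constant $c$ for every $k=0,\dots,n$; normalising $c\ge 0$, one compares $\xx$ and $\yy$ across a maximal block of indices on which they differ and reaches a contradiction from the strict monotone (intertwining) behaviour of the arc maxima under node perturbations, via the order-theoretic machinery of \cite{Sbornik}. I will not reproduce that purely combinatorial argument here; alternatively one quotes the injectivity half of Theorem~18 of \cite{Homeo} verbatim.

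The remaining and hardest point is properness: that $\Phi(Y)$ is closed in $\RR^n$, equivalently that $\Phi^{-1}$ of a compact set stays in a compact subset of $Y$; with openness this gives $\Phi(Y)=\RR^n$ by connectedness. Given $\xx^{(k)}\in Y$ with $\Phi(\xx^{(k)})\to\vv$, compactness of $\oSi$ yields $\xx^{(k)}\to\xx^*\in\oSi$ along a subsequence, and the task is to show $\xx^*\in Y$. If not, $m_\ell(\xx^*)=-\infty$ for some $\ell$, so by continuity $m_\ell(\xx^{(k)})\to-\infty$: an interior arc degenerates or an outer node approaches an endpoint. The subtlety — and the place where the whole argument could fail — is that a priori \emph{all} the $m_j(\xx^{(k)})$ might tend to $-\infty$ at comparable rates, leaving the \emph{differences} bounded. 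Excluding this is exactly what the hypotheses are for: singularity of $K$ drives a collapsing interior arc's maximum to $-\infty$, while the condition $J(0)=-\infty$ or $J(1)=-\infty$, together with \eqref{cond:pm0}, controls the boundary arcs $I_0$ and $I_n$ so that no outer node can slide to an endpoint without penalty; one then exhibits at least one arc whose maximum stays bounded below along the sequence (a sufficiently wide arc, or one meeting a point where $J$ is finite and the kernel contributions are moderate), and the growing gap between that maximum and the diving one makes a telescoped coordinate of $\Phi(\xx^{(k)})$ unbounded — a contradiction. Hence $\xx^*\in Y$ and $\Phi(\xx^*)=\vv$, so $\Phi(Y)$ is closed, and being open and nonempty in the connected $\RR^n$ it equals $\RR^n$. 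Assembling the four steps, $\Phi$ is a bijective continuous open map, hence a homeomorphism of $Y$ onto $\RR^n$, and it is locally bi-Lipschitz by the second step. The sharp quantitative estimates underlying the second and fourth steps are precisely the content of \cite{Homeo}, so in practice the cleanest route is to reduce the assertion directly to Theorem~18 there.
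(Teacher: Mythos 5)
The paper offers no proof of this statement at all: it is imported verbatim as Theorem~18 of \cite{Homeo}, and your proposal --- whose decisive steps (the lower Lipschitz estimate, injectivity, and the properness/surjectivity argument) are each explicitly deferred to that same reference, ending with the advice to ``reduce the assertion directly to Theorem~18'' --- therefore takes essentially the same approach as the paper. Your interpolated four-step outline is a plausible sketch of the strategy behind \cite{Homeo}, but it is not a self-contained proof, which is acceptable here exactly because the paper itself only cites the result.
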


Note that here the ordering of nodes is fixed according to the simplex $\oSi$;
and given the singularity condition, all degenerate node systems contain a degenerate arc with $m_k(\xx)=-\infty$,
so in fact the admissible set $Y\subset \oSi$, too.
Also note that a non-admissible node system from $\oSi\setminus Y$ can never be an equioscillating node system, for $\mol(\xx)>-\infty$ excludes equioscillation at the $-\infty$ level.
Thus in particular this entails existence and unicity of an equioscillating node system.

\begin{thm}\label{thm:partial-homeo} Let $n\in \NN$, let $K_0,K_1,\ldots,K_n$ be $n+1$ strictly concave $1$-periodic kernel functions and let $J$ be a $1$-periodic, otherwise arbitrary $n+1$-field function.

For any value $a\in \TT$ denote $Y^*:=Y^*(a):= \{ \yy=(y_0,y_1,\ldots,y_n) \in \oLS \subset \TT^{n+1}~:~ y_0=a, ~ 
m_k(\yy)>-\infty ~(k=0,1,\ldots,n)\}$.

Then the difference function
$\diff^*(\yy):=
\big(m^*_1(\yy)-m^*_0(\yy),\ldots,m^*_n(\yy)-m^*_{n-1}(\yy)\big)$
is a
locally bi-Lipschitz
homeomorphism between $Y^*(a)$ and $\RR^n$.

In particular, for each fixed value $y_0=a$ there exists one unique equioscillating node system of the form $\yy(a)=(a,y_1(a),\ldots y_n(a)) \in \oLS$.
\end{thm}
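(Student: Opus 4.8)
The plan is to reduce Theorem~\ref{thm:partial-homeo} to its interval counterpart, Theorem~\ref{thm:diffperiodic}, by \emph{cutting the torus exactly at the frozen node} $y_0=a$. Fix $a\in\TT$. The first thing I would record is that every $\yy\in Y^*(a)$ is \emph{strictly} cyclically ordered: if some arc $I_k^*(\yy)$ collapsed to a single point $\{y_k\}$, then $m_k^*(\yy)=F(\yy,y_k)$ would contain the summand $K_k(0)=-\infty$ (the kernels being singular), contradicting the defining condition $m_k^*(\yy)>-\infty$ of $Y^*(a)$. Hence $a=y_0\cless y_1\cless\cdots\cless y_n\cless y_0$; in particular $y_1,\ldots,y_n\in\TT\setminus\{a\}$, which is precisely the set on which $\pi_a^{-1}$ is continuous. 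Thus $y_0=a$ maps to the left endpoint $0$, while $x_j:=\pi_a^{-1}(y_j)$ ($1\le j\le n$) satisfy $0<x_1<\cdots<x_n<1$, i.e.\ $\xx:=(x_1,\ldots,x_n)\in\Si$, the ordering being preserved exactly because the cut sits at $y_0$, right ``before'' $y_1$.

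Next I would transfer the data to the interval. Writing $t=\pi_a(r)$ and using $1$-periodicity, the summand $K_0(t-y_0)=K_0(r)$ no longer depends on the node system, so it is absorbed into the field: put $\newJ(r):=J(r+a)+K_0(r)$ for $r\in[0,1]$. Then $\newJ$ is an $n$-field function on $[0,1]$ (it is bounded above, and since $J$ is finite at more than $n+1$ points while $K_0$ is finite except possibly at $0$, $\newJ$ is finite at more than $n$ points), with $\newJ(0)=\newJ(1)=J(a)+K_0(0)=-\infty$ since $K_0$ is singular; moreover each $K_j$ ($1\le j\le n$) is strictly concave, singular, and, being $1$-periodic, satisfies \eqref{cond:pm0}. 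For $r\in[0,1]$ one has $F(\yy,\pi_a(r))=\newJ(r)+\sum_{j=1}^n K_j(r-x_j)$, and $\pi_a$ maps $I_k(\xx)$ onto $I_k^*(\yy)$ for every $k=0,1,\ldots,n$ --- in particular the wrap-around arc $I_n^*(\yy)=[y_n,y_0]$ corresponds to the interval $I_n(\xx)=[x_n,1]$, precisely because the cut is placed at $y_0=a$. Consequently $m_k^*(\yy)=m_k(\xx)$ for every $k$, whence $\diff^*(\yy)=\diff(\xx)$, where $\diff$ is the interval difference map associated to $\newJ$ and $K_1,\ldots,K_n$. Finally, the assignment $g:\xx\mapsto(a,\pi_a(x_1),\ldots,\pi_a(x_n))$ is a homeomorphism of $Y:=\{\xx\in\oSi:\ m_k(\xx)>-\infty\ (k=0,\ldots,n)\}$ onto $Y^*(a)$ --- bijectivity is immediate from the above, and continuity both ways holds because on $\Si$ the coordinates stay off the cut, where $\pi_a$ and $\pi_a^{-1}$ act coordinatewise as the rotation $s\mapsto s+a$ of $\TT$ --- and $g$ is in fact a local isometry.

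It then remains to invoke Theorem~\ref{thm:diffperiodic}: its hypotheses for $\newJ$ and $K_1,\ldots,K_n$ have just been checked, so $\diff:Y\to\RR^n$ is a locally bi-Lipschitz homeomorphism; since $\diff^*=\diff\circ g^{-1}$ with $g$ a homeomorphism and a local isometry, $\diff^*:Y^*(a)\to\RR^n$ is a locally bi-Lipschitz homeomorphism as well. For the last assertion, a node system $(a,y_1,\ldots,y_n)\in\oLS$ equioscillates at a finite level if and only if $m_0^*=\cdots=m_n^*\in\RR$, which forces it into $Y^*(a)$ and is equivalent to $\diff^*(\yy)=0$; bijectivity of $\diff^*$ onto $\RR^n$ then gives the unique such system $\yy(a):=(\diff^*)^{-1}(0)$.

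The step carrying the actual content, as opposed to bookkeeping, is the very first one: the legitimacy of cutting the torus \emph{exactly} at a node. For a genuinely degenerate system with $y_j=a$ for some $j\ge1$, the naive pullback of the wrap-around arc $I_n^*$ would be wrong, so one must first observe that the admissibility constraint $m_k^*(\yy)>-\infty$, together with singularity of the kernels, rules out every such system from $Y^*(a)$. Once that is settled, the remaining ingredients --- matching arcs to intervals, checking that $\newJ$ is an admissible field with a $-\infty$ endpoint value, and noting that pre-composition with the rotation $g$ preserves the locally bi-Lipschitz homeomorphism property --- are routine.
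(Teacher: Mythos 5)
Your proposal is correct and follows essentially the same route as the paper's own proof: cut the torus at the frozen node $a$, absorb $K_0(\cdot-a)$ into the field to obtain an interval field function with $-\infty$ limits at both endpoints, identify arcs with intervals and $m_k^*$ with $m_k$ via $\pi_a$, invoke Theorem~\ref{thm:diffperiodic}, and derive uniqueness of the equioscillating system from bijectivity plus the impossibility of equioscillation at level $-\infty$; your upfront observation that admissibility forces strict cyclic ordering (legitimizing the cut at $y_0=a$) appears in the paper as the remark that degenerate systems fall outside $Y^*(a)$. Note only that, exactly like the paper's own argument, you use singularity of the kernels, which the theorem's wording omits but evidently intends.
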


\begin{proof} First we introduce a new field function $J^*:=J+K_0(\cdot - a)$.
Obviously $J^*$ will then be a $1$-periodic upper bounded function which is nonsingular at more than $n$ points mod 1 (while one finite value, if $J(a)$ was finite,
could be \enquote{killed}
by $K_0(\cdot -a)$). So, $J^*$ is an $n$-field function, and in view of the singularity condition on $K_0$ it also satisfies the extra singularity equation
$\lim_{t \to a} J^*(t)=-\infty$.

Interpreting the $K_j$ and $J^*$ as defined on the torus, we now transfer them to the interval $[0,1]$ via the covering mapping $\pi_a$. We put $\widetilde{J}(r):=J^*(\pi_a(r))$, which then becomes an $n$-field function on $[0,1]$ satisfying $\lim_{r\downto 0} \widetilde{J} = \lim_{r \upto 1} \widetilde{J} =-\infty$. Also we put $\widetilde{K_j}(r):=K_j(\pi_a(r)-a)$ for all $j=1,\ldots,n$. Obviously these are singular kernel functions, Which are strictly concave, too.

We apply the above Theorem \ref{thm:diffperiodic} to this new system. Let us see what are the arising node systems $\xx \in Y$. First, $\xx \in \oS$ transfers to the cyclic ordering $a \clesse \pi_a(x_1) \clesse \dots \clesse \pi_a(x_n)$, so that writing $\yy:=(y_0,y_1,\ldots,y_n)$ with $y_0:=a$ and $y_j:=\pi_a(x_j)$ ($j=1,\ldots,n$), the ordering condition becomes $\yy \in \oLS$.

Second, the non-singularity conditions for $\xx \in Y$ translate to those for $\yy \in Y^*(a)$,
because $\pi_a : I_k(\xx) \leftrightarrow I_k^*(\yy)$ and
$$
\widetilde{F}(\xx,r)=\widetilde {J}(r)+\sum_{k=1}^n \widetilde{K_k}(r-x_k)= J(t)+K_0(t-a) + \sum_{k=1}^n K_j(t-y_k) \qquad (t:=\pi_a(r))
$$
makes a one-to-one correspondence between sum of translates function values on corresponding points of any $I_k(\xx)$ resp. $I_k^*(\yy)$; in particular,
we find $m_k(\xx)=m_k^*(\yy)$, ($k=0,\ldots,n$), and an arc $I^*_k(\yy)$ will be singular if and
only if the corresponding interval $I_k(\xx)$ was.

So we have seen that when $\yy$ runs $Y^*(a)$ then $\xx$ runs $Y$, and the correspondence is one-to-one.
Moreover, degenerate points do not satisfy the nonsingularity condition, hence
$Y \subset \Si$ and $Y^*(a) \subset \LS$.
This means that between node systems $\xx$ and $\yy$,
the mapping $\pi_a$ and even its inverse $\pi_a^{-1}$ acts continuously,
given that $\pi_a^{-1}$ is applied only to $y_1,\ldots,y_n$ off $a$.

Summing up, $\Phi^*(\yy)=\Phi(\pi_a^{-1}(\yy))$, and even this composition mapping is continuous, moreover, it maps one-to-one to $\RR^n$. Obviously its inverse $\Phi^{-1}\circ \pi_a$ is continuous, too, once $\Phi^{-1}$ was, so the mapping is a homeomorphism between $Y^*(a)$ and $\RR^n$. We also get the bi-Lipschitz property from that of $\Phi$.

Finally, uniqueness of an equioscillating node system with given fixed $y_0=a$ follows from the homeomorphism result for $Y^*(a)$
(where we get exactly one system with all differences zero), and from the
fact that outside $Y^*(a)$ all node systems provide some singular value $m^*_k(\yy)=-\infty$,
while equioscillation cannot take place on that level,
for $\mol^*(\yy)  \in \RR$, always.
\end{proof}

Having proved the above, we may try to progress towards a Bojanov-type characteristaion result. We already know that for the global minimax point there is equioscillation; and we now established that for each fixed value of $y_0$ there is exactly one equioscillating node system. So, writing $\ff(a)= (a, y_1(a),\ldots,y_n(a))$ for this unique equioscillation point with $y_0=a$, it suffices to look for the global minimum of $\mu(a):=\mol^*(\ff(a))$ over $a \in \TT$. The question is if these equioscillation values are always the same -- as we have seen in \cite{TLMS2018} when $J\equiv 0$ -- or if $\mu(a)$ is non-constant. Unicity of equioscillating \emph{node systems} cannot hold (there is one for each fixed value $a$ of the first coordinate $y_0$), but one may hope for unicity of the \emph{equioscillation value}.

In Section \ref{sec:counterexample}, however, we will see that
even this modified hope is deluded.

\section{Counterexamples -- equioscillation does not characterize minimax or maximin, majorization occurs, minimax can be smaller than maximin}\label{sec:counterexample}

Bojanov's Theorem \cite{Bojanov} in the classical algebraic polynomial setting included an important characterization statement, too: the extremal minimax system was \emph{characterized} by the equioscillation property.
That is, there was exactly one equioscillating node system, which was necessarily the minimax point.

Fenton's classical theorem added another statement to the theory (in his context):
he also proved that the unique maximin point equals to the (unique) minimax point (and hence the maximin and minimax values are equal, too).

In the weighted algebraic setting we proved similarly strong results in \cite{Sbornik}. Also, we found the same for the unweighted trigonometric setting in \cite{TLMS2018}, save the trivial free rotation (which was discarded by indexing from $0$ and fixing the value of $y_0$ in $\TT$).

In this section we explore examples which show that we cannot expect as many results as for the interval case or for the torus without weights.
Basically, we will show that $m(\LS)>M(\LS)$ does occur.

Our examples will use a singular kernel,
so  by our above results minimax and maximin points exist,
moreover, they are equioscillation points.
Therefore, the examples also mean that there are different equioscillation values, hence strict majorization occurs even between equioscillating node systems.
Furthermore, we will choose the kernel to be the standard log-sine kernel $K(t):=\log|\sin(\pi t)|.$
This means that even for the classical trigonometric polynomial case one cannot expect any better results for general weights.

\subsection{A counterexample with majorization}

\begin{example}
\label{ex:counterexample}
Set $K(t):=\log|\sin(\pi t)|$, $n=2$, $\nu_1=\nu_2=1$, and
$J(t)=0$ on $\{0\} \cup [1/2,1)$ and $J(t)=-\infty$ on $(0,1/2)$.

Then,
the minimax and maximin values on the
\enquote{cyclic simplex}
$\LS$ are $m^*(\oLS)=-2\log(2)$,
$M^*(\oLS)=-\log(2)$, respectively.

Moreover, for any $\lambda\in [-2\log(2),-\log(2)]$
there is an equioscillating node system
$\yy\in \LS$
with $\lambda=m_1^*(\yy)=\ldots=m_n^*(\yy)$.
\end{example}

In the following we will determine all equioscillation values.

We are to minimize $\mol^*(\yy)$ (and maximize $\mul^*(\yy)$) on $\yy \in \TT^2$.
By relabeling, if necessary,
we can assume $\yy=(y_1,y_2)$ with $0\le y_1\le y_2<1$.

First, we make the following simple observations
about the behavior of the pure sum of translates function.
From the evenness of $K$ it follows that $f(\yy,\cdot)$ behaves
symmetrically on the two intervals before and after the midpoint $\frac{y_1+y_2}{2}$
on $I_1^*(\yy)\sim [y_1,y_2]$, and similarly for the other arc $I_2^*(\yy)$.
In particular, $f(\yy,\cdot)$ is strictly monotone increasing on $(y_1,\frac{y_1+y_2}{2})$ and decreasing on $(\frac{y_1+y_2}{2}, y_2)$.
Similarly, it is strictly increasing on $(y_2,\frac{y_1+y_2}{2}+\frac{1}{2})$
and strictly decreasing on
$(\frac{y_1+y_2}{2}+\frac{1}{2},1+y_1)$.
Moreover, $f$ is maximal on $I_i(\yy)$ at the midpoint, and
if the length of this interval is $\ell:=|I_i(\yy)|$,
the length of the arc $I_i(\yy)$,
then its value is $2K(\ell/2)$.
As for $F(\yy,\cdot)$, it follows that on any of the arcs $I_i^*(\yy)$
it attains its maximum on the point(s) of the arc which have $J(t)=0$ and are closest to the midpoint among those.
Also note that among the two midpoints, which are exactly of distance $1/2$, only one can belong to the singular set $X_J=(0,1/2)$.

\medskip
In the following we use the variables
$x=y_1+y_2$, $z=y_2-y_1$,
by which we can express $y_1=\frac{x-z}{2}$, $y_2=\frac{x+z}{2}$.

Note that we cannot have $0\le y_1\le y_2\le1/2$,
for then $m_1^*(\yy)=-\infty$, which cannot be an equioscillation value, given that $\mol^*(\yy)>-\infty$.
Thus $y_2\ge 1/2$, and we have $x\ge y_2\ge 1/2$, too.

So, let the first case be $1/2\le x< 1$.
Then, the midpoint $x/2$ of the arc $I_1^*(\yy)$ lies in $[1/4,1/2)$,
which is in the singular set $(0,1/2)$,
hence the maximum will be attained at the closest nonsingular point of the arc,
which is $1/2$.
That is,
$m_1^*(\yy)=F(\yy,1/2)=f(\yy,1/2)$.
Further,
$m_2^*(\yy)=F(\yy,x/2+1/2)=f(\yy,x/2+1/2)$.

We will use the identity
\begin{align} \notag
f(\yy,t)&=
f\left(\left(\frac{x-z}{2},
\frac{x+z}{2}\right), t\right)
=
\log\left|\sin \pi
\left(t-\frac{x-z}{2}\right)\,
\sin \pi
\left(t-\frac{x+z}{2}\right)
\right|
\\& =
-\log(2)+\log\left|
\cos(\pi z) -\cos \pi(2t -x)
\right|\qquad (t \in \RR).
\label{eq:identity}
\end{align}

With this,
the equation $f(\yy,1/2)=f(\yy, x/2+1/2)$
can be rewritten as
\begin{equation*}
\left|\cos(\pi z) -\cos\pi \left(2\frac{1}{2}-x\right)\right|
=
\left|\cos(\pi z) -\cos\pi \left(2\left(\frac{x}{2}+\frac{1}{2}\right)-x\right)\right|.
\end{equation*}
The right hand side is $1+\cos(\pi z)$.
For the sign of the left hand side we observe $x+z=2y_2\ge 1$,
hence $1-x\le z \le x \le 1$,
so by monotonicity of $\cos(\pi s)$ for $0\le s \le 1$,
we get
$\left|\cos(\pi z) -\cos\pi \left(2\frac{1}{2}-x\right)\right|=\cos(\pi(1-x))-\cos(\pi z)$.

So we are led to the equation
\begin{equation*}
\cos \pi(1-x)\ -\cos(\pi z)
=
\cos(\pi z) +1,
\end{equation*}
and solving it for $\cos(\pi z)$
yields
\begin{equation*}
\cos(\pi z)=
\frac{\cos\pi(1-x) -1}{2}.
\end{equation*}
The condition $1-x\le z$ is satisfied,
since $\cos\pi(1-x)\ge (\cos\pi(1-x)-1)/2$ in general.
By simple steps, the condition $z\le x$ is
equivalent to $\cos(\pi x)\le -1/3$.
So if $1/2\le x\le \beta_0:=\arccos(-1/3)/\pi\approx 0.608$,
then $z$ does not satisfy
$z\le x$.

The arising equioscillation value is
\begin{multline*}
m_2^*\left(\left(\frac{x-z}{2},\frac{x+z}{2}\right)\right)
= f(\yy,x/2+1/2)
\\
=-\log(2)+\log\left|
\frac{\cos\pi(1-x)\;-1}{2} +1
\right|
\\ =
-2\log(2)+\log\left(1-\cos(\pi x)\right)
\end{multline*}
for $\beta_0\le x\le 1$.

The next case is $1\le x< 3/2$.
Since $x+z=y_1+y_2\le 2$, we have $0\le z\le 2-x$, too.
Now $x/2 \notin X_J$,
while $x/2+1/2 \in X_J$
(except for $x=1$ and $x/2+1/2=1$),
with the closest nonsingular point from $I_2^*(\yy)$ being $1$ (remaining valid even in case $x=1$).
Hence, $m_1^*(\yy)=f(\yy,x/2)$
and $m_2^*(\yy)=f(\yy,1)$.
Now, again by \eqref{eq:identity},  $f(\yy,x/2)=f(\yy, 1)$
is equivalent to
\begin{equation}\label{simp:fourthcase}
\left|\cos(\pi z) -\cos\pi \left(2\frac{x}{2}-x\right)\right|
=
\left|\cos(\pi z) -\cos\pi \left(2\cdot 1 -x\right)\right|.
\end{equation}
Here the left hand side is
$|\cos(\pi z)-1|=1-\cos(\pi z)$.
Also, $0\le z\le 2-x \le 1$, so
$\cos(\pi z)\ge \cos \pi(2-x)$,
and \eqref{simp:fourthcase}
can be written as
\begin{equation*}
1-\cos(\pi z)
=\cos(\pi z) - \cos\pi(2- x).
\end{equation*}
Solving it for $\cos(\pi z)$
we are led to
\begin{equation*}
\cos(\pi z)=
\frac{1+\cos\pi x}{2}.
\end{equation*}
The condition $0\le z\le 2-x \le 1$
on $z$ is equivalent to
$1\ge \cos(\pi z)\ge\cos\pi(2-x)=\cos(\pi x)$
and hence is obviously satisfied.

The equioscillation value,
again depending only on
$x$, is found again to be
\begin{multline*}
m_2^*\left(\left(\frac{x-z}{2},\frac{x+z}{2}\right)\right)
= f(\yy,x/2)
\\
=-\log(2)+\log\left|1-
\frac{1+\cos(\pi x)}{2}
\right|
\\ =
-2\log(2)+
\log\left(1-\cos(\pi x)\right).
\end{multline*}

Finally, let $3/2\le x<2$.
This means that $y_1\ge 1/2$, for $x=y_1+y_2\le y_1+1$.
Then again, $m_1^*(\yy)=f(\yy,x/2)$, for $x/2 \in [1/2,1]$.
However, in $I^*_2(\yy)$ the closest non-singular point to the midpoint $x/2+1/2\equiv x/2-1/2 \bmod 1$ will be $1/2$, and we will get
$m_2^*(\yy)=f(\yy,1/2)$.
Therefore, the equioscillation equation becomes
$f(\yy,x/2)=f(\yy,1/2)$
and, by \eqref{eq:identity}, it is
\begin{equation*}
|\cos(\pi z)-\cos \pi\left(2\frac{x}{2}-x\right)|
=
|\cos(\pi z)-\cos \pi\left(2\frac{1}{2}-x\right)|.
\end{equation*}
The left hand is $1-\cos(\pi z)$
and the right hand side is
$|\cos(\pi z)-\cos \pi x|$.
Since $3/2\le x <2$ and $z+x=2y_2\le 2$ implies $0\le z \le 2-x<1/2$,
here both terms
are nonnegative, and the equation becomes
\begin{equation*}
1-\cos(\pi z)
=
\cos(\pi z) + \cos \pi x .
\end{equation*}
Solving it for $\cos(\pi z)$
we get
\begin{equation*}
\cos(\pi z)=
\frac{1-\cos(\pi x)}{2}.
\end{equation*}
Again, we verify that $0\le z\le 2-x$,
which is equivalent to
$1\ge \cos(\pi z)\ge \cos\pi(2-x)$.
The second inequality is equivalent to
$\cos(\pi x)\le 1/3$,
and it holds for $x\in[3/2,2]$
if and only if $x\le 2-\arccos(1/3)/\pi =1+\beta_0$,
$1+\beta_0\approx 1.608$.
So $z\le 2-x$ if and only if
$x\in[3/2,1+\beta_0]$.

The equioscillation value  is
\begin{multline*}
m_2^*\left(\left(\frac{x-z}{2},\frac{x+z}{2}\right)\right)
= f(\yy,1)
\\
=-\log(2)+\log\left|1-
\frac{1-\cos(\pi x)}{2}
\right|
\\ =
-2\log(2)+
\log\left(1+\cos(\pi x)\right).
\end{multline*}

Now we can collect the obtained equioscillation values
coming from all the three cases:
\begin{equation*}
m_1^*(\yy)=m_2^*(\yy)=-2 \log 2 +
\begin{cases}
\log(1-\cos (\pi x))
\qquad & (\beta_0\le x<3/2)
\\
\log(1+\cos (\pi x))
\qquad & (3/2\le x<1+\beta_0) \end{cases}
\end{equation*}
It is minimal when
$x=3/2$ (in this case $z=1/3$)
and it is maximal when
$x=1$ (in this case $z=1/2$).
The corresponding values are
$-2\log(2)\approx -1.386$
and $-\log(2)\approx -0.693$.

Hence we get
$M(\oLS)= -2\log(2)$
and
$m(\oLS)= -\log(2)$.

\subsection{A modified counterexample with continuous field function}

In this subsection we sketch a counterexample
which is a modification of the previous one, but with a continuous field function.

First, we set a new external
field function $\newJ$, and then
we compute the arc maxima for the two extremal
node systems from the previous counterexample.
We will find that the arc maxima $m_i^*(\yy)$ of those two node systems will not change when we replace $\widetilde{J}$ for $J$, whence they will still be equiocillating
with the same equioscillation values $-\log 2$ and $-2\log 2$, respectively.

Let $\alpha > 4\pi$ be fixed
and let $\newJ$ be $0$ on $[1/2,1]$,
$-\alpha t$ on $[0,1/4)$
and $\alpha (t-1/2)$ on $[1/4,1/2)$; further, let $\newJ$ be extended $1$-periodically to $\RR$.

Regarding the $x=1$, $z=1/2$ maximin configuration,
we have $y_1=1/4$, $y_2=3/4$.
It is easy to check that $f(\yy,t)$ is
strictly monotone decreasing on $(0,1/4]$
and on $[1/2,3/4)$
and it is strictly monotone increasing
on $(1/4,1/2]$
and on $(3/4,1]$.
Hence, $\newF(\yy,t)=\newJ(t)+f(\yy,t)$
is the same monotone in these intervals.
Therefore $\newmstar_1(\yy)=\newF(\yy,1/2)=F(\yy,1/2)=f(\yy,1/2)=-\log(2)$
and $\newmstar_2(\yy)=\newF(\yy,1)=F(\yy,1)=f(\yy,1)=-\log(2)$.

Regarding the $x=3/2$, $z=1/3$ minimax configuration,
we have $y_1=7/12$, $y_2=11/12$.
It is easy to check that $f(\yy,t)$ is
strictly monotone increasing (and concave)
on $(-1/12,1/2]$
and $f'(\yy,0)=4\pi$.
Adding $\newJ$ to it, we see that
$\newF(\yy,\cdot)$ is strictly monotone increasing on $(-1/12,0]$
and strictly monotone decreasing on $[0,1/4)$.
Using the symmetry of $\newF(\yy,\cdot)$
with respect to $1/4$,
$\newF(\yy,\cdot)$ is strictly monotone increasing on $(1/4,1/2]$
and strictly monotone decreasing on $[1/2,7/12)$.
Moreover, $\newmstar_1(\yy)=\newF(\yy,0)=\newF(\yy,1/2)=f(\yy,0)=-2\log(2)$.
Computing $\newmstar_2(\yy)$ is simpler: for
$t\in I_2^*(\yy)=(7/12,11/12) \subset [1/2,1]$, $\newF(\yy,t)=f(\yy,t)=F(\yy,t)$, and $\sup f(\yy,\cdot)$ remains the same as before.
Therefore, $\newmstar_2(\yy)=\newF(\yy,3/4)=f(\yy,3/4)=-2\log(2)$.

Summing up, we obtained that
$\widetilde{m}(\oLS)\le -2\log(2)$
and $\widetilde{M}(\oLS)\ge -\log(2)$.
Also, taking into account that
$\widetilde{J}\ge J$, hence $\newF\ge F$,
we also know that
$\widetilde{m}(\oLS)\ge m(\oLS)$
and $\widetilde{M}(\oLS)\ge M(\oLS)$.
Therefore,
$\widetilde{M}(\oLS)= -2\log(2)$, too.

Even if we don't proceed to compute the exact maximin value, too, it is clear from what has already been done that in this example $\widetilde{m}(\oLS)\ge -\log(2) > \widetilde{M}(\oLS)=-2 \log 2$, hence the same phenomenon takes place for the continuous, finite kernel $\newJ$ as before for the kernel $J$.

\section*{Acknowledgment}

This research was supported by project
TKP2021-NVA-09.
Project no.~TKP2021-NVA-09
has been implemented with the support provided by the Ministry of Innovation
and Technology of Hungary from the National Research, Development and
Innovation Fund, financed under the TKP2021-NVA funding scheme.

The work of Sz.~Gy.~Révész was supported in part by Hungarian National Research, Development and Innovation Fund project \# K-119528.

\medskip

\noindent
\hspace*{5mm}
\begin{minipage}{\textwidth}
\noindent
\hspace*{-5mm}Béla Nagy\\
 Department of Analysis,\\
 Bolyai Institute, University of Szeged\\
 Aradi vértanuk tere 1\\
  6720 Szeged, Hungary\\
\end{minipage}

\medskip

\noindent
\hspace*{5mm}
\begin{minipage}{\textwidth}
\noindent
\hspace*{-5mm}
Szilárd Gy.{} Révész\\
 Alfréd Rényi Institute of Mathematics\\
 Reáltanoda utca 13-15\\
 1053 Budapest, Hungary \\
\end{minipage}

\end{document}